\newcommand{\lp}{L^p(\mathbb{R}^n)}
\newcommand{\R}{\mathbb{R}}
\newcommand{\C}{\mathbb{C}}
\newcommand{\N}{\mathbb{N}}
\newcommand{\M}{{\mathcal{M}_{n,k}}}
\newcommand{\G}{{\mathcal{G}_{n,k}}}
\newcommand{\lqm}{L^q(\mathcal{M}_{n,k})}
\newcommand{\Tel}{\mathcal{T}}
\newcommand{\Tnk}{T_{n,k}}
\newcommand{\Tt}{T^*_{n,k}}
\newcommand{\Sel}{\mathcal{S}}
\newcommand{\one}{\mathbbm{1}}
\newcommand{\derivsy}[1]{\mathscr{D}^{#1}} % D_y? 
\newcommand{\derivsly}[2]{\mathscr{D}_{#2}^{#1}}
\newcommand{\derivsx}[1]{D^{#1}}
\newcommand{\derivslx}[2]{D_{#2}^{#1}}
\newtheorem{thm}{Theorem}[section]
\newtheorem{rmk}[thm]{Remark}
\newtheorem{defn}[thm]{Definition}
\newtheorem{prop}[thm]{Proposition}
\newtheorem{lem}[thm]{Lemma}
\begin{document}

\title {Smoothness of extremizers for certain inequalities\\
of the Radon transform}
\author{ Taryn C. Flock }
\address{Macalester College\\Mathematics, Statistics, and Computer Science\\
Olin-Rice Science Center, Room 222\\
Saint Paul, MN 55105-1899\\USA}
\email{tflock@macalester.edu}

\begin{abstract}
 The Radon transform is a bounded operator from $L^p$ of Euclidean space to $L^q$ of the manifold of all affine hyperplanes  in $\R^n$ for certain exponents depending dimension.  Extremizers, functions which maximize the functional 
 \[ \|\mathcal{R}f\|_{\lqm}/\|f\|_{L^p(\R^n)},\] have been determined for certain values of $q$ and $p$, see \cite{BL97, C11}, but most remain open. 	

 We show that extremizers are infinitely differentiable whenever the exponents in the associated Euler-Lagrange equation, $q-1$ and $\frac1{p-1}$, are integers. The proof adapts the method of Christ and Xue \cite{CX}, to the case where the underlying space is a manifold. 

 The proof is carried out in the setting of the $k$-plane transform, which takes functions on $\R^n$ to functions on the manifold of all affine $k$-planes  in $\R^n$ by integrating the function over the $k$-dimensional plane.  We show that when $q-1$ and $\frac1{p-1}$ are intergers, all nonnegative critical points of the functional 
 \[ \|\Tnk f\|_{\lqm}/\|f\|_{L^p(\R^n)}\] 
 are infinitely differentiable, all derivatives are in $L^p$ and exhibit some additional decay measured in a weighted $L^p$-space.

\end{abstract}

\maketitle

\section{Introduction}

% Notation 
  Optimal constants and extremizers, the functions which achieve them, have been determined for many key inequalities in harmonic analysis, for example in Lieb's celebrated work on the Hardy-Littlewood-Sobolev inequality \cite{HLS}.  In this paper we consider extremizers for $L^p$-$L^q$ inequalities of the Radon transform, and its generalization, the $k$-plane transform. 

The Radon transform takes functions on $\R^n$ to functions on the manifold of all affine hyperplanes in $\R^n$ by mapping a function $f$ to the function whose value on a given $n-1$-plane is the integral of $f$ over that hyperplane with respect to (the restriction of) Lebesgue measure. The $k$-plane transform generalizes this operator by replacing $n-1$-dimensional planes with $k$-planes where $k\leq n$. When $k=1$, $f$ is mapped to the function whose value on a given line is the integral of $f$ over that line. This transformation is called the X-ray transform. 

Through a series of works \cite{C84,D84,OS82,Cald83,S81} the $k$-plane transform is now known to be a bounded operator from $\lp$ to $L^q$ of the manifold of all affine $k$-planes in $\R^n$ when $q\in[1,n+1]$ and $p=\frac{nq}{n-k+kq}.$ .   

In several special cases the extremizers of these $L^p$-$L^q$ inequalities have been explicitly identified. In the cases $q=2$ and $q=n+1$ all extremizers are known \cite{BL97, C11, D11, F13}. When $k=2$ all radial extremizers are known \cite{BL97}. In each of these cases, the extremizers are uniquely given by $ f(x)=(1+|x|^2)^{\frac{-(n-k)}{2(p-1)}}$, up to the symmetries of the problem. Notably, these extremizers are all infinitely differentiable. The situation is considerably different in the case that $q=1.$ Here, every nonnegative function is an extremizer regardless of differentiablity, and all extremizers are given by $g=cf(x)$ where $c\in\C$ and $f$ is nonnegative.

As a first step toward characterizing extremizers of the endpoint $q=n+1$ inequality for the Radon transform in \cite{C11}, Christ shows that all extremizers of the endpoint Radon transform inequality are smooth.  This is done by showing an identity relating extremizers for the endpoint inequality of the Radon transform, to extremizers of an $L^p$-$L^q$ estimate for a convolution operator shown to be smooth in \cite{CX}.  However, such an identity does not hold when either $q$ or $k$ is varied.

This paper adapts the methods of Christ and Xue \cite{CX} to apply directly to inequalities for the $k$-plane transform. Because the method fundamentally uses multilinearity, our results address $L^p$-$L^q$ inequalities where $\frac{1}{p-1}$ and $q-1$ are both positive integers.

	\begin{thm}\label{thm:justextremizers}  
		Let $n\geq 2$, $q\in(1,n+1]$ and $p=\frac{nq}{n-k+kq}$, such that both $q-1$ and $\frac{1}{p-1}$ are { integers}. If $f\in L^{p}(\R^n)$ is an extremizer of \eqref{LpLq}, then $f\in C^\infty$, all partial derivatives of $f$ are bounded, and there exists $\delta>0$ such that $(1+|x|^2)^\delta \derivsx{s}f(x)\in L^{p}(\R^n)$ for all $s\geq 0$.  
	\end{thm}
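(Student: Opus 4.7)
The plan is to adapt the multilinear bootstrap of Christ--Xue to the manifold setting of the $k$-plane transform. First I would derive the Euler--Lagrange equation: a nonnegative extremizer $f$ satisfies, for some Lagrange multiplier $\lambda>0$,
\[
\Tt\bigl((\Tnk f)^{q-1}\bigr) \;=\; \lambda\, f^{p-1}.
\]
Writing $m := q-1$ and $\ell := 1/(p-1)$, both positive integers by hypothesis, this rearranges to the fixed-point identity
\[
f(x) \;=\; \lambda^{-\ell}\Bigl[\Tt\bigl((\Tnk f)^{m}\bigr)(x)\Bigr]^{\ell}.
\]
Parametrizing $k$-planes through $x$ as $\pi = x+V$ with $V\in\G$, one expands the right-hand side as an explicit $m\ell$-fold multilinear-plus-power expression
\[
f(x) \;=\; c\,\biggl(\int_{\G}\Bigl(\int_{V}f(x+v)\,dv\Bigr)^{m}\!d\mu(V)\biggr)^{\ell},
\]
in which the arguments of $f$ are distributed over a geometric family of $k$-planes passing through $x$.

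Second, I would establish initial regularity. Since $\Tt\circ\Tnk$ is, up to a constant, the Riesz potential $I_{n-k}$, the Hardy--Littlewood--Sobolev inequality combined with the known $L^p$--$L^q$ bounds for $\Tnk$ lets one bootstrap the fixed-point identity to show that $f\in L^r$ for some $r>p$, and after finitely many iterations that $f\in L^\infty$. A parallel iteration, exploiting that $\Tt((\Tnk f)^m)(x)$ is an average of values of $f$ supported on $k$-planes through $x$, yields an initial polynomial weighted bound $(1+|x|^2)^{\delta_0}f\in L^p$.

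The core step is the derivative gain. Differentiating the fixed-point identity and moving $\nabla$ onto the integrand via the parametrization $x+v$ yields
\[
\nabla f(x) \;=\; c'\,\bigl[\Tt((\Tnk f)^{m})(x)\bigr]^{\ell-1}\,\Tt\bigl((\Tnk f)^{m-1}\,\Tnk(\nabla f)\bigr)(x),
\]
so $\nabla f$ is expressed as a multilinear form in $f$ with $m\ell-1$ copies of $f$ carrying no derivative and a single copy of $\nabla f$. Iterating this identity $s$ times distributes at most $s$ derivatives among the $m\ell$ factors; at each stage, the factors without derivatives continue to supply integrability and decay via the bounds established in the previous step, while the $k$-order smoothing of $\Tt\circ\Tnk$ (fractional integration) converts $L^p$ or pointwise information on $\derivsx{s}f$ into control on $\derivsx{s+1}f$. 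The weighted statement is obtained by the parallel bootstrap: one inserts $(1+|x|^2)^{\delta}$, absorbs the weight into one of the $f(x+v)$ factors using the geometric support of the kernel, and iterates.

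The main obstacle will be geometric. Unlike the convolution setting of \cite{CX}, the operators $\Tnk$ and $\Tt$ are not translation invariant on $\M$: differentiation of $\Tnk f$ in a manifold direction does not cleanly correspond to differentiation of $f$ on $\R^n$, and transferring a derivative in $x$ onto the inner integrals over $V\in\G$ involves Jacobians depending on the relative angles of the participating $k$-planes. Showing that these Jacobians are uniformly controlled, that derivatives on $\M$ are dominated by Euclidean derivatives of $f$ after such a change of variables, and that the multilinear $L^p$--$L^q$ estimates close uniformly through arbitrarily many rounds of the iteration, is the technically demanding piece.
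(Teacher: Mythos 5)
Your outline captures the broad strategy but has several genuine gaps. The central one: you propose to ``differentiate the fixed-point identity'' and iterate, but this presupposes that $\nabla f$ already exists and lies in a tractable space --- exactly what needs to be proved. The identity
\[
\nabla f \;=\; c'\,\bigl[\Tt((\Tnk f)^{m})\bigr]^{\ell-1}\,\Tt\bigl((\Tnk f)^{m-1}\,\Tnk(\nabla f)\bigr)
\]
is therefore circular as a starting point, and furthermore it is not correct as written: $\nabla_x$ does not pull through $\Tt$ as an operator on its argument, since $\Tt g(x)=\int_\G g(\theta,P(x,\theta^\perp))\,d\theta$ and differentiating the composition produces the projected $y$-derivative of $g$ on $\M$, not $\Tt$ applied to a differentiated input. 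The paper handles both difficulties simultaneously by working with mollified derivatives $D^s_\Lambda$ (bounded on the weighted spaces for each fixed $\Lambda$, Lemma~\ref{niceMD}), proving the intertwining relation between $D^s_\Lambda$ and the $y$-derivative $\mathscr{D}^s_\Lambda$ on $\M$ via the Fourier-slice theorem (Lemma~\ref{commute}), distributing derivatives over the $m$-fold and $\ell$-fold products using weighted Kato--Ponce inequalities on both $\R^n$ and $\M$ (Lemmas~\ref{katoponce}, \ref{mdgood}), and then closing a self-improving inequality $\|D^s_\Lambda f\|_{X_\varrho}\le C\|D^s_\Lambda f\|_{X_\varrho}^{1-\gamma/s}$ uniformly in $\Lambda$ by dividing through once finiteness is known.

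Two further issues. First, your initial-regularity step leans on ``$\Tt\circ\Tnk$ is the Riesz potential''; aside from the index error (the kernel $|x|^{-(n-k)}$ gives $I_k$, not $I_{n-k}$), this identity is only usable when $m=q-1=1$, since in the Euler--Lagrange equation $\Tt$ acts on $(\Tnk f)^{q-1}$, not on $\Tnk f$, so the Riesz-potential structure is destroyed. Consequently the HLS bootstrap you sketch does not apply, and the paper in fact never proves $f\in L^\infty$ directly: boundedness of $f$ and all its derivatives comes only at the very end via Sobolev embedding once all $D^s f$ are in $X_\varrho$. Second, ``a parallel iteration yields an initial polynomial weighted bound'' is where the paper does some of its hardest work: Proposition~\ref{extraDecay} is proved not by iteration but by a contraction-mapping argument (Lemma~\ref{fixedpoint}), decomposing $f=\varphi_\epsilon+g_\epsilon$ with $\|g_\epsilon\|_{L^p}$ small, and showing the associated fixed-point map is a strict contraction on a small ball of the weighted space $X_t$. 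You have identified the outline correctly but the obstacle you flag (Jacobians and angles on $\G$) is not where the technical weight lies; it is in the mollification device, the Kato--Ponce substitute for the Leibniz rule, and the contraction-mapping argument for initial decay.
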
 

We also prove that extremizers do indeed exist whenever $q$ is a positive integer, using a concentration compactness argument similar to those in \cite{HLS,D11,D12}.   Because the proof is relatively routine we defer this result to an appendix: Theorem  \ref{thm:integerexistence}. 

The fundamental approach from Christ and Xue \cite{CX} is to use that all extremizers are solutions to a certain Euler-Lagrange equation. They then show that all solutions to the Euler-Lagrange equation enjoy better decay than that of $L^p$, and all solutions enjoying better decay are smooth.  Our proof follows the same outline. One change is that while the convolution operator considered by Christ and Xue maps functions on $R^n$ to functions on $R^n$, for the $k$-plane transform one of these underlying spaces is a manifold. Both halves of the argument require additional adaptation. 

To show that solutions enjoy better decay requires weighted estimates for the $k$-plane transform. Such inequalities have been studied previously by Solmon \cite{S79} and Rubin \cite{R12}.  Weighted inequalities corresponding to known mixed norm estimates for the $k$-plane transform of radial functions have been studied by Kumar and Ray \cite{KR12}.  Solmon \cite{S79} considers weights $v_{\alpha}(x)= \langle x \rangle^{\alpha -d}$ where $0<\alpha\leq k<d$.  He shows that for $1\leq p<n/k$ and $0<\alpha<k$, $\Tnk$ is a bounded operator from $L^p$ to $L^1(v_\alpha)$.  Rubin \cite{R12} considers weights of the form $|x|^{\mu}$ where $\mu>k-n/p$. He shows that for $1\leq p \leq \infty$, $1/p+1/p'=1$, $\nu=\mu-k/p'$ and   $\mu>k-n/p$,  $\Tnk$ is a bounded operator from $L^p(|x|^\mu)$ to $L^p(|y|^\nu)$. We consider weights $\langle x\rangle^{\frac{d-k}{p-1}}$. Note that unlike in Solmon's case the power is positive. 
 \begin{prop}\label{prop:weighted}
 $\Tnk$ is a bounded operator from  $L^p(\langle x\rangle^{\frac{n-k}{p-1}})$ to $L^q(\langle y\rangle^{d})$ where $q\in(1,n+1]$ and $p=\frac{nq}{n-k+kq}$. 
 \end{prop}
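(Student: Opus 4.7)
The plan is to reduce the weighted estimate to the known unweighted boundedness from \cite{C84,D84,OS82,Cald83,S81} by absorbing the target weight into the operator via a simple pointwise comparison. The key geometric observation is that for an affine $k$-plane $\pi$ with foot of perpendicular $y\in\R^n$ from the origin, every $x\in\pi$ decomposes as $x=y+(x-y)$ with $x-y\perp y$, so $|x|^2=|y|^2+|x-y|^2\ge|y|^2$ and hence $\langle x\rangle\ge\langle y\rangle$. Consequently, for nonnegative $f$ and any $\alpha\ge 0$,
\[
\langle y\rangle^\alpha\,\Tnk f(\pi)=\int_\pi \langle y\rangle^\alpha f(x)\,dx\;\le\;\int_\pi \langle x\rangle^\alpha f(x)\,dx=\Tnk\bigl(\langle\cdot\rangle^\alpha f\bigr)(\pi).
\]

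Taking $L^q(\M)$ norms and invoking the unweighted $L^p$-$L^q$ boundedness applied to $\langle\cdot\rangle^\alpha f$ yields
\[
\bigl\|\Tnk f\bigr\|_{L^q(\langle y\rangle^{\alpha q})}\;\le\;\bigl\|\Tnk\bigl(\langle\cdot\rangle^\alpha f\bigr)\bigr\|_{L^q(\M)}\;\le\;C\,\|f\|_{L^p(\langle x\rangle^{\alpha p})}.
\]
Choosing $\alpha=(n-k)/[p(p-1)]$ makes $\alpha p=(n-k)/(p-1)$, matching the domain weight in the hypothesis, and delivers the target weight with exponent $d=\alpha q=(n-k)q/[p(p-1)]$. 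The extension to general $f$ is immediate from $|\Tnk f|\le\Tnk|f|$.

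The only real obstacle is identifying the correct $\alpha$, i.e.\ reading off the implicit balance between the domain and target weights from the homogeneity of the underlying unweighted estimate; once the elementary pointwise comparison $\langle x\rangle\ge\langle y\rangle$ is in hand, the proof is essentially bookkeeping. No refined analysis of the geometry of $\M$ is needed beyond the orthogonal decomposition at a single plane $\pi$. Should a larger exponent $d$ be desired than this rigid scaling permits, one would need to abandon the pointwise reduction and instead decompose $f$ dyadically into annuli $A_j=\{|x|\sim 2^j\}$, exploit that $\Tnk(f\chi_{A_j})$ is essentially supported where $\mathrm{dist}(\pi,0)\lesssim 2^j$, and apply the unweighted estimate scale-by-scale; but for the exponent above, the pointwise reduction suffices.
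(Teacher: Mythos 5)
Your geometric observation is correct: for $x$ in the affine plane parameterized by $(\theta,y)$ with $y\in\theta^\perp$ one has $|x|^2 = |y|^2 + |x-y|^2 \ge |y|^2$, so $\langle x\rangle\ge\langle y\rangle$, and the reduction to the unweighted estimate is sound. But the exponent $d$ your argument delivers is forced by the rigid relation $\alpha p = \frac{n-k}{p-1}$ and $d = \alpha q$, which gives
\[
d \;=\; \frac{(n-k)q}{p(p-1)} \;=\; \frac{(n-k+kq)^2}{n(q-1)}.
\]
In the paper, $d$ denotes the ambient dimension $n$ (matching the weight $w_*(\theta,y)=\langle y\rangle^n$ used throughout Section 2), and the two are generically different. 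In particular for the X-ray transform $k=1$ at the endpoint $q=n+1$ your formula gives $d = (2n)^2/n^2 = 4$, which is strictly less than $n$ whenever $n>4$; there your argument yields a strictly weaker estimate and does not establish the proposition. You anticipate this limitation (``Should a larger exponent $d$ be desired than this rigid scaling permits\ldots''), but the dyadic fallback is not carried out.

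The paper's route is structurally different. It relies on the exact pointwise identity $\Tnk(w^{-1}) = C_0\, w_*^{-1/q_{el}}$ (Lemma~\ref{niceweights}, a special case of Rubin's formulas), which feeds a complex-interpolation argument for the analytic family $T_z f = w_*^{z/q_{el}}\,\Tnk(w^{-z}f)$: at $\operatorname{Re}(z)=0$ one uses the unweighted $L^{p_0}\to L^{q_0}$ bound, and at $\operatorname{Re}(z)=1$ the $L^\infty\to L^\infty$ bound furnished by the identity together with positivity of $\Tnk$. That $L^\infty$ endpoint is not accessible to your comparison, because you place the weight on $f$ inside $\Tnk$, and $\Tnk$ is not $L^\infty$-bounded without the compensating weight on the output. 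The interpolation scheme is what achieves the exponent balance making $\langle y\rangle^n$ the right target weight, and it also produces the entire one-parameter family of estimates (Lemma~\ref{WI}) that the rest of the paper uses.
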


The proof that solutions enjoying better decay are smooth has two key ingredients: a known smoothing result for the $k$-plane transform \cite{S81} adapted to the weighted spaces, and uniform estimates for mollified derivatives in these weighted spaces.   We correct a small technical error from \cite{CX} in the definition of these mollified derivatives and re-prove their Kato-Ponce type inequality for these mollified derivatives on weighted space (Lemma \ref{katoponce}) and prove a version where $R^n$ is replaced by the manifold of affine $k$-planes (Lemma \ref{mdgood}). For standard fractional derivatives, a similar weighted Lebesgue space result was proved in \cite{CUN16},\cite{CUN22err} for a larger range of exponents.  Our proof follows the methods of Christ and Weinstein in \cite{CW91} adapted to the weighted setting. As we are specifically interested in the case where the weights are of the form $\langle x \rangle^\alpha$, it might be possible to prove the result for a wider range of weights and exponents as in \cite{OhWu22} but we do not pursue this here.

While the condition that both $q-1$ and $\frac{1}{p-1}$ are  integers is somewhat restrictive, it is satisfied infinitely often. For example,
for the the Radon transform both are integers precisely when $q$ is an integer larger than 1 and $ n$ is divisible by $q-1$, e.g when $q=3$ and the dimension is even. However for the X-ray transform the only new case for which this condition  holds is $q=3$ and $n=4$.  % which can be seen to be the only new result for the X-ray transform observing that \( \displaystyle p_{el} = \frac{1}{q-1}+\frac{1}{n-1} + \frac{1}{(q-1)(n-1)} \) which is smaller than 1 
More generally, for any pair of integers $q_{el}$, $p_{el}$ not both $1$, for any $s\in\N$ taking $q=q_{el}+1$, $n=q_{el}(p_{el}+1)s$, and $k=(p_{el}q_{el}-1)s$ gives $\frac{1}{p-1}=p_{el}$ and $q-1=q_{el}$, although other choices of $n$, $q$ and $k$ may also yield this $q_{el},p_{el}$ pair.

Smoothness of extremizers for convolution with compactly supported measures has recently been shown in \cite{T23}. While the operators are related to those considered in \cite{CX} the methods are different. 
Smoothness of extremizers has also been investigated in the context of sharp Fourier restriction \cite{DQ}. Both of these example feature underlying structure the convolution st

\subsection*{Definitions and Notations} 
Before formally stating our results, we state some relatively standard definitions and notation that will be used throughout. 

Let $\G$ be the Grassmann manifold of all $k$-dimensional subspaces of $\R^n$ (namely those $k$-planes which pass through the origin)  and let $\M$ be the manifold of all affine $k$-planes in $\R^n$. We identify each $k$-plane $\M$ as a translation of a $k$-plane through the origin, which yields the parameterization of $\M$ as $(\theta,y)$ where $\theta\in\mathcal{G}_{n,k}$ and $y\in\theta^\perp$, the $(n-k)$-dimensional subspace orthogonal to $\theta$. % Thus $(\theta,y)$ represents the affine $k$-plane formed by translating $\theta$ a k-dimensional subspace by  $y$.

The $k$-plane transform takes complex-valued functions on $\R^n$ to complex-valued functions on this manifold $\M$,  by mapping a function $f$ to the the function whose value on a given $k$-plane is the integral of $f$ over that $k$-plane with respect to (the restriction of) Lebesgue measure. 

More precisely, 
	\[ \Tnk f(\theta,y)=\int_{x\in\theta}f(x+y)\;d\lambda_\theta(x), \] 

where $d\lambda_\theta$ denotes Lebesgue measure on the $k$-plane $\theta$.   

In addition to the usual $L^p(\R^n)$ spaces we'll use Lebesgue spaces on the manifold $\M$. The parameterization of $\M$ as $\G\times\R^{n-k}$, suggests a natural product measure.  The compact manifold $\mathcal{G}_{n,k}$ is invariant under the action of the orthogonal group.  This invariance gives rise to a natural probability measure on $\G$, which we denote by $d\theta$. We let $d\lambda_{\theta^\perp}$ denote Lebesgue measure on the $(n-k)$-plane $\theta^\perp$ and equip $\M$ with the product measure $d\lambda_{\theta^\perp} d\theta$. Thus the Lebesgue spaces are the manifold $\M$ have $L^q$ norm: 
\[ \| g \|_{\lqm}= 	 \left( \int_{\G}\int_{\theta^\perp}|g(\theta,y)|^{q}\;d\lambda_{\theta^\perp}(y)\;d\theta \right)^{1/q}.\]

We will also work with the dual or adjoint $k$-plane transform given by 
	\[ \Tnk^*f(x)=\int_{\G}f(\theta,P(x,\theta^\perp))\;d\theta \] 
 where $P(x,\theta^\perp)$ is the projection of $x$ on to the $(n-k)$-dimensional plane perpendicular to $\theta$.   This is the dual transform in the sense that if $f$ and $g$ are Schwartz functions, then 
 \[ \int_{\M} (\Tnk f )\cdot\bar{g}\;d\lambda_\theta(y)\;d\theta= \int_{\R^n}f\cdot\overline{(T^*_{n,k}g)}\;dx. \]

 As proved in \cite{C84} building on the work of Drury in \cite{D83}, for $q\in[1,n+1]$ and $p=\frac{nq}{n-k+kq}$ there exists a constant $C$ such that for all  $f\in L^p(\R^n)$ 
% A_{n,k,p,q}$ 
	\begin{equation}\label{LpLq}
	\|\Tnk f \|_{\lqm}
		 \leq C \|f\|_{L^{p}(\R^n)}. 
	\end{equation} 
 By duality, for $q'\in[1+\frac{1}{n},\infty)$, and  $p'=\frac{nq'}{n-k}$  there exists a constant $C$ such that for all  $f\in L^{q'}(\M)$, 
	\[  \left( \int_{\R^n}|T^*_{n,k}g(x)|^{p'}\;dx \right)^{1/p'}
		 \leq C \|g\|_{L^{q'}(\M)}. \] 
 A function $f$ is a an {\it extremizer}  of the inequality \eqref{LpLq} for a specific pair $(p,q)$ if  $\|f\|_{L^p(\R^n)}$ is positive and finite, and
		\[ \frac{\|\Tnk f\|_{\lqm} }{\|f\|_{L^p(\R^n)}}= \sup_{\{g:\|g\|_{L^p(\R^n)}\neq0\}} \frac{ \|\Tnk g\|_{\lqm}}{\|g\|_{L^p(\R^n)}} . 
		\] 
	In this paper we are concerned with critical points of the functional 
\begin{equation}\label{func} \Phi(f) = \frac{\|\Tnk f\|_{\lqm} }{\|f\|_{L^p(\R^n)}}. 
\end{equation} 
  For $q\in(1,n+1]$, the nonnegative critical points of this functional satisfy the Euler-Lagrange equation 
\begin{equation}	\label{EL}
		f=\lambda(\Tnk^*[(\Tnk f)^{q_{el}}])^{p_{el}}
\end{equation}
where  $q_{el}=q-1$, $p_{el}=\frac{1}{p-1}$, and $\lambda=\left(\|f\|_{L^p(\R^n)}^{p}\|\Tnk f\|_{L^q(\M)}^{-q}\right)^{p_{el}}$. When the value of $\lambda$ is unimportant, we will refer to equation \ref{EL} as a generalized Euler-Lagrange equation.

To discuss smoothness of functions we define $\derivsx{s}$ to be the Fourier-multiplier operator defined by $\derivsx{s}f(x) = (|\xi|^s\hat{f}(\xi))^\vee$ and, analogously, $\derivsy{s} f (\theta,y)= (|\eta|^s\hat{f}(\theta,\eta))^\vee$ where the Fourier transform is taken only in the $y$ variable.

\subsection*{Results}

	\begin{thm}\label{main}  
		Let $n\geq 2$ and $\lambda\in\R$. Take $q\in(1,n+1]$, $p=\frac{nq}{n-k+kq}$, such that both $q-1$ and $\frac{1}{p-1}$ are integers. Let $f\in L^{p}(\R^n)$ be any real-valued solution of the generalized Euler-Lagrange equation \eqref{EL}.  Then $f\in C^\infty$, all partial derivatives of $f$ are bounded, and there exists $\delta>0$ such that $(1+|x|^2)^\delta \derivsx{s}f(x)\in L^{p}(\R^n)$ for all $s\geq 0$.  
	\end{thm}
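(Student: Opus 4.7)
The plan is to carry out the two-stage Christ-Xue bootstrap \cite{CX} in the $k$-plane setting, exploiting that when $q_{el}=q-1$ and $p_{el}=1/(p-1)$ are positive integers, the right-hand side of the Euler-Lagrange equation \eqref{EL},
\[ f(x)=\lambda\bigl(\Tt[(\Tnk f)^{q_{el}}](x)\bigr)^{p_{el}}, \]
is a genuine multilinear expression in $f$ and so respects product and differentiation rules without the fractional-power pathologies that would otherwise obstruct the argument.

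\textbf{Stage 1: Decay.} Starting from $f\in\lp$, I would combine \eqref{EL} with Proposition \ref{prop:weighted} to upgrade $f$ to the weighted space $L^p(\langle x\rangle^{p\delta})$ for some small $\delta>0$. Since $(\Tnk f)^{q_{el}}$ is a pointwise product of $q_{el}$ copies of $\Tnk f$, I can apply H\"older on $\M$ with one factor absorbing the weight $\langle y\rangle^{d}$ supplied by Proposition \ref{prop:weighted} and the remaining $q_{el}-1$ factors controlled in $\lqm$. The weight in $y$ passes through $\Tt$ to a weight in $x$ using the pointwise bound $\langle P(x,\theta^\perp)\rangle\le\langle x\rangle$, and the outer $p_{el}$-th power in \eqref{EL} then yields an inequality which, iterated a bounded number of times matched to the integers $q_{el}$ and $p_{el}$, produces the genuine decay $\langle x\rangle^{\delta}f\in\lp$.

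\textbf{Stage 2: Smoothness.} With weighted integrability in hand, I would induct on the order $s$ of the mollified derivative $\derivsx{s}$ to show that $\langle x\rangle^{\delta'}\derivsx{s}f\in\lp$ for some $\delta'>0$ at every $s\ge 0$; once established, $C^{\infty}$ regularity and uniform boundedness of all partial derivatives follow via Sobolev embedding on balls combined with the decay at infinity. Applying $\derivsx{s}$ to \eqref{EL} and expanding by the integer multilinear structure, the Kato-Ponce inequalities (Lemma \ref{katoponce} on $\R^n$ and Lemma \ref{mdgood} on $\M$) distribute derivatives across the products into pieces where a fractional derivative lands on a single factor $\Tnk f$ inside the product, or on a single factor $\Tt[\cdots]$ in the outer product. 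Each such piece is tamed by the Strichartz-type smoothing estimate for $\Tnk$ in its weighted form, consuming a fixed amount of the decay accumulated in Stage 1, so the induction closes.

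\textbf{Main obstacle.} The principal difficulty is the double bookkeeping across $\R^n$ and $\M$: the smoothing gain is recorded by $\derivsy{s}$ on $\M$ and must be converted into $\derivsx{s}$ regularity on $\R^n$ via the geometry of $\Tnk$, while simultaneously the weight $\langle x\rangle^{\alpha}$ has to pass cleanly between the two spaces without exhausting the decay accumulated in Stage 1. The mollification of $\derivsx{s}$ (and the correction to \cite{CX} noted in the introduction) is precisely what prevents the singular symbol $|\xi|^s$ at $\xi=0$ from breaking the product rule on weighted spaces; this is the technical heart of both Lemmas \ref{katoponce} and \ref{mdgood} and therefore of the proof.
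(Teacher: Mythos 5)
Your Stage 2 is broadly aligned with the paper's argument (mollified derivatives, Kato--Ponce distribution of derivatives through the multilinear structure, Strichartz smoothing for $\Tnk$, weighted Littlewood--Paley), though the paper does not run an induction in $s$: for each fixed $s>\gamma(\rho)$ it derives a single self-improving inequality $\|\derivslx{s}{\Lambda}f\|_{X_\varrho}\le C\|\derivslx{s}{\Lambda}f\|_{X_\varrho}^{1-\gamma/s}$ and closes at once, using Lemma~\ref{niceMD} to know the quantity is finite before dividing and then letting $\Lambda\to\infty$. Also, the role of the mollification is not to smooth out a singularity of $|\xi|^{s}$ at the origin; it is to truncate the growth at infinity so that $\derivslx{s}{\Lambda}$ is an $\mathit{a~priori}$ bounded operator on $X_\varrho$, which is what makes the self-improving inequality non-vacuous.

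Stage 1, however, contains a genuine gap: the scheme as written is circular. Proposition~\ref{prop:weighted} (and its interpolated form, Lemma~\ref{WI}) maps $L^{p}(\langle x\rangle^{(n-k)/(p-1)})$ (i.e.\ $X_{t}$ with $t>0$) into a weighted space on $\M$; it does not produce a weight from nothing. Starting only with $f\in L^{p}=X_{0}$, plugging into the Euler--Lagrange equation and applying the weighted $k$-plane estimates returns $\Sel f$ in $X_{0}$, not in any $X_{t}$ with $t>0$, so there is no foothold from which to ``iterate a bounded number of times.'' The paper resolves this with the small-plus-nice decomposition and contraction argument of Lemma~\ref{fixedpoint}: write $f=\varphi_\epsilon+g_\epsilon$ with $\varphi_\epsilon\in L^\infty$ of compact support (hence in every $X_t$) and $\|g_\epsilon\|_{X_0}<\epsilon$, observe that $g_\epsilon$ solves a perturbed fixed-point equation $h=A_\epsilon(h)$ whose forcing term $\mathcal{L}(\varphi_\epsilon,g_\epsilon)=\lambda\Sel(\varphi_\epsilon+g_\epsilon)-\lambda\Sel(g_\epsilon)-\varphi_\epsilon$ has every summand containing at least one factor of $\varphi_\epsilon$ and therefore lies in $X_t$ by Lemma~\ref{mleonebetter}, and show that $A_\epsilon$ is a strict contraction on a small ball in $X_t$. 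The decay gain then propagates to $g_\epsilon$ (and hence $f$) by uniqueness of fixed points across the nested scale $X_t$. Without this decomposition-and-contraction step, the H\"older/weight-passing manipulations you describe never get started, because they all presuppose the very weighted integrability of $f$ you are trying to prove.
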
 

This immediately implies Theorem \ref{thm:justextremizers}. If $f\in\lp$ is an extremizer of \eqref{main}, then as $|\Tnk f| \leq \Tnk|f|$, $|f|$ is also an extremizer and further $f=c|f|$. Thus $|f|$ is a solution of \eqref{EL} to which we may apply Theorem \ref{main} whenever both $q-1$ and $\frac{1}{p-1}$ are integers.

 When $q_{el}=q-1$, $p_{el}=\frac{1}{p-1}$ are odd integers all complex-valued critical points of the functional \eqref{func} satisfy the Euler-Lagrange equation \eqref{EL},  if the powers of the complex numbers on the right hand side of \eqref{EL} are interpreted appropriately (see \cite{CQ}). Specifically, if $z\in\C$ and $0\neq s\in\R$, $z^s$ is interpreted as $z|z|^{s-1}$. When $s$ is an odd integer, $s-1$ is even, and $|z|^{s-1}$ can be written as a product of positive integer powers of $z$ and $\bar{z}$. This allows the argument for Theorem \ref{main} to be carried out for complex-valued functions with straightforward modifications to the formulas to account for various complex conjugations, but only when both $q_{el}$ and $p_{el}$ are odd integers. Again, we note that  there exists at least one choice of $n,k$ and $q$, such that $q_{el}$ and $p_{el}$ are any specified pair of odd integers other than $(1,1)$.  
 
\begin{rmk} Theorem 1 holds also for complex-valued solutions of the generalized Euler-Lagrange equation \eqref{EL} when both $q_0-1$ and $\frac{1}{p_0-1}$ are odd integers. 
\end{rmk}

The author thanks her thesis advisor Michael Christ, for his guidance at the early stages of this project and his comments on an early version of this manuscript. 

\subsection*{Notation} For the duration of the paper,  $q_0$ and $p_0$ will satisfy $q_0\in(1,n+1]$ and $p_0=\frac{nq_0}{n-k+kq_0}$ and the dimension will satisfy $n\geq 2$. Additionally, $q_{el}$ and $p_{el}$ will denote the exponents in the associated Euler Lagrange equation, namely $q_{el}=q_0-1$ and $p_{el}=\frac{1}{p_0-1}$. In Sections 4 through 7, we require that both $q_{el}$ and $p_{el}$ be integers.

In the body of the paper, $C$ will denote a finite positive constant  whose value may vary line to line but depends only on $q_0,n$, and $k$, parameters $t,\rho,\varrho$, etc. specifying weighted norms, or a parameter $s$ specifying the number of derivatives. 

In Appendix \ref{sec:kp}, $C$ will be allowed further to depend on various specified cutoff functions. 

Finally, $\mathscr{S}(\R^n)$ will denote the Schwartz class of functions on $\R^n$.

\section {Weighted inequalities} 

In this section, we consider the mapping properties of the $k$-plane transform and its adjoint on $L^p$ spaces weighted to capture additional decay. 

Fix $p_0>1$ and $q_0$ such that inequality \ref{LpLq} holds. Let $w:\R^n\to\R$ be given by
\[ w(x) = (1+|x|^2)^{\frac{n-k}{2(p_0-1)}} = \langle x \rangle ^{p_{el}(n-k)},\] 
and $w^*:\M \to\R$ be given by 
\[ w_*(\theta,y)= (1+|y|^2)^{n/2}= \langle y \rangle^{n}.\]

Note that $w^{-1}$ is a known solution of the generalized Euler-Lagrange equation \eqref{EL}.

\begin{lem}[\cite{R04}] \label{niceweights} There exist positive finite constants $C_1$ and $ C_0$ depending only on $q_0,n$ and $k$, such that 
 \begin{align*}
\Tnk(w^{-1}) = C_0 w_*^{-1/q_{el}} \\   % C_0=\pi^{k/2}\Gamma(n/2(q_{el}) / \Gamma((\frac{n}{q_{el}}+k) /2) 
T^*_{n,k}(w_*^{-1})      = C_1 w^{-1/p_{el}}.  % C_1 =  \pi^{k/2}\sigma_{n-k-1} \Gamma((n-k)/2) / \sigma_{n-1}\Gamma(n /2) where $\sigma_n=|S^n|$ 
\end{align*} 
\end{lem}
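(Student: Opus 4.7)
Both identities are explicit computations that exploit the rotational symmetry built into $w$ and $w_*$. My plan is to treat the two displayed equalities in turn, each reducing to a one‑dimensional integral whose value I can read off from standard Euler/Beta identities; the only genuine check is that the resulting exponents match the ones on the right-hand side, which is the content of the relation $p_0=\frac{nq_0}{n-k+kq_0}$.

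For the first identity, I would parameterize $\M$ as $(\theta,y)$ with $y\in\theta^\perp$, so that for $x\in\theta$ one has $|x+y|^2=|x|^2+|y|^2$. Writing $\alpha=\frac{n-k}{2(p_0-1)}$, the integral defining $\Tnk(w^{-1})(\theta,y)$ becomes $\int_{\R^k}(1+|x|^2+|y|^2)^{-\alpha}\,dx$. Rescaling $x\mapsto(1+|y|^2)^{1/2}x$ factors out $(1+|y|^2)^{k/2-\alpha}$, leaving a finite constant $C_0=\int_{\R^k}(1+|x|^2)^{-\alpha}\,dx$; finiteness is equivalent to $2\alpha>k$, which after using $p_0-1=\frac{(n-k)(q_0-1)}{n-k+kq_0}$ reduces to the tautology $n>0$. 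It then remains to verify the algebraic identity $k/2-\alpha=-n/(2q_{el})$, and substituting the same expression for $p_0-1$ gives this immediately, so $\Tnk(w^{-1})=C_0\,w_*^{-1/q_{el}}$.

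For the second identity I would first use rotational invariance to set $x=re_1$, so that $|P(x,\theta^\perp)|^2=r^2|P(e_1,\theta^\perp)|^2$. When $\theta$ is distributed according to the $O(n)$-invariant probability measure on $\G$, a classical computation (equivalently, pick the unit vector uniformly at random instead of the plane) shows that $u:=|P(e_1,\theta^\perp)|^2$ has the $\mathrm{Beta}\!\left(\tfrac{n-k}{2},\tfrac{k}{2}\right)$ distribution. Hence
\[
T^*_{n,k}(w_*^{-1})(x)=\frac{1}{B\!\left(\tfrac{n-k}{2},\tfrac{k}{2}\right)}\int_0^1 (1+r^2u)^{-n/2}u^{(n-k)/2-1}(1-u)^{k/2-1}\,du.
\]
This is Euler's integral representation of $_2F_1$ with $a=n/2$, $b=(n-k)/2$, $c=n/2$, $z=-r^2$, so it equals $_2F_1\!\left(n/2,(n-k)/2;n/2;-r^2\right)$. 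Applying the identity $_2F_1(a,b;a;z)=(1-z)^{-b}$ yields $(1+r^2)^{-(n-k)/2}=w^{-1/p_{el}}(x)$, with constant $C_1=1$ after this normalization.

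The principal obstacle in each part is routine rather than deep: one must verify the exponent bookkeeping that collapses $\Tnk(w^{-1})$ to a power of $w_*^{-1}$ and ensure convergence of the defining integrals. For the adjoint identity, the only non-elementary input is the $_2F_1(a,b;a;z)=(1-z)^{-b}$ identity applied to the Euler integral; once this is recognized, the cancellation is immediate. This matches precisely the computation of Rubin \cite{R04} that is cited.
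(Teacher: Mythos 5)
Your computation is correct and matches the paper's approach: the paper itself gives no argument beyond citing the direct computation in Rubin's Example 2.2, and what you have written out is precisely that computation, exploiting orthogonality (so $|x+y|^2 = |x|^2 + |y|^2$ for $x\in\theta$, $y\in\theta^\perp$) for the first identity and the $O(n)$-invariance of the Haar measure on $\G$ (reducing to the Beta distribution of the squared projection of a unit vector onto a random $(n-k)$-plane) for the second. The exponent bookkeeping checks out on both sides — $k/2 - \frac{n-k}{2(p_0-1)} = -\frac{n}{2(q_0-1)}$ and $\frac{n-k}{2(p_0-1)}\cdot\frac{1}{p_{el}} = \frac{n-k}{2}$ — and the convergence condition $2\alpha>k$ does indeed reduce to $n>0$ after substituting $p_0-1 = \frac{(n-k)(q_0-1)}{n-k+kq_0}$.

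One remark: the $_2F_1(a,b;a;z)=(1-z)^{-b}$ step, while certainly valid, is heavier machinery than needed; since $a=c$ the Euler integral is just $\frac{1}{B(b,c-b)}\int_0^1 u^{b-1}(1-u)^{c-b-1}(1-zu)^{-c}\,du = (1-z)^{-b}$, which can be seen directly by the substitution $v = \frac{(1-z)u}{1-zu}$ (or, more simply, by recognizing the binomial-theorem content of the collapsed series). This does not affect correctness, only economy.
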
 
The lemma follows from direct computation and is a special case of the formulas given in \cite{R04} Example 2.2. 

%  \begin{cor} For each $q\in(1,n]$,  $w^{-1}$ is a solution of the generalized Euler-Lagrange equation \eqref{EL}. 
%  \end{cor} 

Lemma \ref{niceweights} together with the fact that  if $0\leq f\leq g$, then $\Tnk f\leq \Tnk g$, gives that 
\[ \|(\Tnk f)\; w_*^{1/q_{el}} \|_{L^\infty(\M)} \leq C \| f\, w \|_{L^\infty(\R^n)} \]
and that similar a estimate holds for $\Tnk^*$. Complex interpolation with the estimate \eqref{LpLq} yields a family of weighted estimates. 
% If f has some decay,  Tnk f has more 

Let $p_0'$ and $q_0'$ be the H\"older conjugates of $p_0$ and $q_0$, and set 
\begin{align*}
    q_t=q_0/(1-t)  & \quad
    p_t=p_0/(1-t) \\
    q'_t=q_0'/(1-t)  & \quad
    p'_t=p_0'/(1-t)
\end{align*}
Be aware that $p'_t\neq(p_t)'$, however the latter will not enter our discussion.

\begin{lem}\label{WI}  There exists a constant $C$ such that for all $t\in[0,1]$, for all nonnegative functions $f:\R^n\to[0,\infty)$ and $g:\M\to[0,\infty)$, 
 \begin{align*}% Weighted inequalities 
 \left(\int_\M(T_{n,k} f)^{q_t}w_*^{tq_t/q_{el}}\;d\lambda_{\theta^\perp}(y)\;d\theta\right)^{1/q_t}& \leq C \left(  \int_{\R^n} f^{p_t}w^{tp_t}dx\right)^{1/p_t} \\
 \left(\int_{\R^n}(\Tnk^*g)^{p'_t}w^{tp'_t/p_{el}}\;dx\right)^{1/p'_t} & \leq C \left(  \int_\M g^{q'_t}w_*^{tq'_t}\;d\lambda_{\theta^\perp}(y)d\theta\right)^{1/q'_t} .
\end{align*} 
\end{lem}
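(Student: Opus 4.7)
My plan is to obtain both inequalities by Stein's analytic interpolation, using the $L^{p_0}$-$L^{q_0}$ estimate \eqref{LpLq} as one endpoint and a weighted $L^\infty$ estimate extracted from Lemma \ref{niceweights} as the other. For the first inequality, the $t=1$ endpoint is
\[ \|(\Tnk f)\, w_*^{1/q_{el}}\|_{L^\infty(\M)} \leq C \|f w\|_{L^\infty(\R^n)}, \]
which holds because the pointwise bound $f \leq \|fw\|_{L^\infty} w^{-1}$ together with monotonicity of $\Tnk$ on nonnegative functions and the identity $\Tnk(w^{-1}) = C_0 w_*^{-1/q_{el}}$ from Lemma \ref{niceweights} immediately yields it. The $t=0$ endpoint is just \eqref{LpLq}.

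To interpolate I introduce the analytic family
\[ T_z f = w_*^{z/q_{el}}\,\Tnk(f w^{-z}), \qquad 0 \leq \Re z \leq 1, \]
which is analytic in $z$ since $w^{-z}$ and $w_*^{z/q_{el}}$ are exponentials linear in $z$. On the line $\Re z = 0$ the multipliers $w^{-z}$ and $w_*^{z/q_{el}}$ are unimodular, so \eqref{LpLq} gives $\|T_{is}f\|_{L^{q_0}(\M)} \leq C\|f\|_{L^{p_0}(\R^n)}$ uniformly in $s \in \R$. On $\Re z = 1$ one has $|w^{-z}| = w^{-1}$ and $|w_*^{z/q_{el}}| = w_*^{1/q_{el}}$, so the endpoint above yields $\|T_{1+is}f\|_{L^\infty(\M)} \leq C\|f\|_{L^\infty(\R^n)}$ uniformly in $s$. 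Stein's interpolation theorem then produces $\|T_t f\|_{L^{q_t}(\M)} \leq C\|f\|_{L^{p_t}(\R^n)}$ for $t \in [0,1]$, and replacing $f$ by $f w^{t}$ and raising everything to the $q_t$-th and $p_t$-th powers gives exactly the first claimed estimate.

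For the second estimate I run the same argument with the analytic family $S_z g = w^{z/p_{el}}\,\Tt(g w_*^{-z})$, combining the dual of \eqref{LpLq} at $\Re z = 0$ with the weighted $L^\infty$ endpoint at $\Re z = 1$ coming from the second identity $\Tt(w_*^{-1}) = C_1 w^{-1/p_{el}}$ of Lemma \ref{niceweights}. The only routine technical point, and the sole place where any care is needed, is verifying the admissibility hypotheses of Stein's theorem: analyticity of $z \mapsto \int (T_z f)\,\bar{h}\, d\mu$ for $f,h$ in a suitable dense class and sub-exponential growth of the boundary operator norms. Both are immediate here since $w$ and $w_*$ are bounded away from $0$ and $\infty$ on compact sets (so one may first restrict to bounded compactly supported nonnegative functions and pass to the general case by monotone convergence) and the boundary bounds above are in fact uniform in $s$. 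No genuine obstacle seems to arise; the content of the lemma really is just that Lemma \ref{niceweights} gives the right $L^\infty$ endpoint to interpolate with \eqref{LpLq}.
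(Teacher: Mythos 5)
Your proof is correct and follows essentially the same route as the paper: the same analytic family $T_z f = w_*^{z/q_{el}}\Tnk(fw^{-z})$, the same two endpoints (\eqref{LpLq} at $\Re z = 0$, the weighted $L^\infty$ bound from Lemma \ref{niceweights} at $\Re z = 1$), and Stein interpolation, with the dual argument handled symmetrically. You have simply spelled out some of the details (unimodularity on the boundary lines, the substitution $f \mapsto fw^t$, admissibility) that the paper leaves implicit.
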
 
\begin{proof}

Consider the analytic family of operators $T_zf=w_*^{z/q_{el}}\Tnk(w^{-z}f)$ on the strip $\{z:0\leq Re(z)\leq1\}$.  If $Re(z)=0$ then $T_{z}f$ is bounded from $L^{p_0}(\R^n)$ to $L^{q_0}(\M)$ for our choice of $p_0$ and $q_0$. If $Re(z)=1$, $T_{z}f$ is bounded from $L^{\infty}(\R^n)$ to $L^{\infty}(\M)$, by Lemma \ref{niceweights}. Both bounds are uniform in $Im(z)$. Therefore, the first conclusion follows by complex interpolation.  

The proof of the second inequality is similar. 
%Similarly, consider the analytic family of operators $T_z^*f=w^{z/p_{el}}\Tnk^*(w_*^{-z}f)$ on the strip $\{z:0\leq Re(z)\leq1\}$.  If $Re(z)=0$ then $T_{z}f$ is bounded from $L^{q_0'}(\M)$ to $L^{p_0'}(\R^n)$.  If $Re(z)=1$ then $T_{z}f$ is bounded from $L^{\infty}(\M)$ to $L^{\infty}(\R^n)$, by Lemma \ref{niceweights}. Again, both bounds are uniform in $Im(z)$ and  conclusion follows by complex interpolation.  
%|v^ix|=1 and |(v^*)^ixf|=|f| 
\end{proof} 

Given this decay estimate, we define the following spaces.  % Xt spaces interpolate between different kinds of quantitative decay, for small t, we think of Xt as Lp with additional decay from the weight. 

\begin{defn}\label{def:XtYt}Define the spaces $X_t$ and $Y_t$ to be the set of all equivalence classes of measurable functions on $\R^n$ for which the following weighted norms are finite: 
\begin{align*}
||f||^{p_t}_{X_t}  =&\int_{\R^n}|f|^{p_t}w^{t p_t}\;dx \\
||f||^{p_t'}_{Y_t}  =&\int_{\R^n}|f|^{p_t'}w^{t p_t'/p_{el}}\;dx 
\end{align*} 
Similarly, define $X_{*,t}, $ and $Y_{*,t}$ to be the sets of all equivalence classes of measurable functions on $\M$  for which the following weighted norms are finite: 
\begin{align*}
||g||^{q_t'}_{X_{*,t}}\!\! =&\int_{\M}|g|^{q'_t}w_*^{t q'_t}\;d\lambda_{\theta^\perp}\;d\mu \\ 
||g||^{q_t}_{Y_{*,t}}=&\int_{\M}|g|^{q_t}w_*^{t q_t/q_{el}}\;d\lambda_{\theta^\perp}\;d\mu .\\ 
\end{align*} 
\end{defn}

\begin{defn} 
For all complex valued functions  $f:\R^n\to\C$ and $g:\M\to\C$, set 
% Primary purpose of the \Tel operators is to make the formula for \Sel parsible 
\begin{align*}
\Tel(f)=&(\Tnk f)^{q_{el}} \\
\Tel_*(g)=&(\Tnk^*g)^{p_{el}} \\
\Sel(f)=&\Tel_*(\Tel(f)) . 
%\Sel(f)= \left( \Tnk^*\left(\left(\Tnk(f) \right)^{q_{el}}\right)\right)^{p_{el}}
\end{align*} 
\end{defn} 
With this notation, the generalized Euler-Lagrange equation  \eqref{EL} becomes 
\begin{equation*}
f=\lambda \Sel(f). 
\end{equation*}

\begin{prop}\label{sE}  There exists a constant $C$ such that for all $t\in[0,1]$, for all $f\in X_t$, 
\begin{equation*}
||\Sel f||_{X_{t}}\leq C ||f||_{X_t} ^{q_{el}p_{el}}.
\end{equation*} 
\end{prop}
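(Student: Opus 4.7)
The plan is to factor $\mathcal{S} = \mathcal{T}_* \circ \mathcal{T}$ and chain together the two weighted estimates of Lemma \ref{WI}, inserting the pointwise power operations $g \mapsto g^{q_{el}}$ and $h \mapsto h^{p_{el}}$ between them. Schematically, the assertion is that $\mathcal{S}$ fits into the composition
\[ X_t \xrightarrow{T_{n,k}} Y_{*,t} \xrightarrow{(\cdot)^{q_{el}}} X_{*,t} \xrightarrow{T^*_{n,k}} Y_t \xrightarrow{(\cdot)^{p_{el}}} X_t, \]
where the first and third arrows are bounded by Lemma \ref{WI}, and the remaining two arrows should be norm-preserving in the sense that $\|g^{q_{el}}\|_{X_{*,t}} = \|g\|_{Y_{*,t}}^{q_{el}}$ and $\|h^{p_{el}}\|_{X_t} = \|h\|_{Y_t}^{p_{el}}$.

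The content of the argument is thus reduced to verifying these two norm identities, which in turn reduce to the algebraic identities $q_{el} q'_t = q_t$ and $p_{el} p_t = p'_t$, together with the matching weight-exponent identities $tq'_t = tq_t/q_{el}$ and $tp_t = tp'_t/p_{el}$. Since $q_{el} = q_0 - 1$ one has $q'_0 q_{el} = q_0$, and since $p_{el} = 1/(p_0 - 1)$ one has $p'_0 = p_0 p_{el}$; dividing both through by $(1-t)$ gives the $t$-dependent versions. These identities reflect the very reason the exponents in Definition \ref{def:XtYt} were defined as they were, namely so that $\mathcal{S}$ cycles $X_t$ back to itself after a total power of $q_{el} p_{el}$.

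Granting this bookkeeping, the proof reduces to the chain
\[ \|\mathcal{S}f\|_{X_t} = \|T^*_{n,k}((T_{n,k}f)^{q_{el}})\|_{Y_t}^{p_{el}} \leq C \|(T_{n,k}f)^{q_{el}}\|_{X_{*,t}}^{p_{el}} = C\|T_{n,k}f\|_{Y_{*,t}}^{q_{el} p_{el}} \leq C\|f\|_{X_t}^{q_{el} p_{el}}, \]
valid for nonnegative $f$; a general complex-valued $f$ is handled by the pointwise inequality $|\mathcal{S}f| \leq \mathcal{S}|f|$ together with $\||f|\|_{X_t} = \|f\|_{X_t}$. There is no serious analytic obstacle beyond Lemma \ref{WI} itself: the only task is to confirm that the weights chosen in Definition \ref{def:XtYt} interact correctly with the pointwise powers, and this is precisely what the identities in the previous paragraph ensure.
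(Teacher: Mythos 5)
Your proof is correct and takes essentially the same route as the paper: the paper derives Proposition \ref{sE} from Lemma \ref{bddops}, whose proof makes exactly the algebraic observations you make ($q_{el}q'_t = q_t$ so that the $q_{el}$-th power carries $Y_{*,t}$ isometrically into $X_{*,t}$, and $p_{el}p_t = p'_t$ analogously), together with the reduction to nonnegative $f$ via positivity of $T_{n,k}$ and $T^*_{n,k}$. You simply run the chain $X_t \to Y_{*,t} \to X_{*,t} \to Y_t \to X_t$ in one pass rather than splitting the composite bounds on $\mathcal{T}$ and $\mathcal{T}_*$ into a separate lemma, but the substance is identical.
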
 

The proposition follows from the following estimates:

\begin{lem}\label{bddops} There exists a constant $C$ such that for any $t\in[0,1]$, for any functions $f\in X_t$ and $g\in X_{*,t}$ 
\begin{align*}
 ||\Tnk f||_{Y_{*,t}} \leq& C ||f||_{X_t} \\
  ||\Tnk^* f||_{Y_{t}} \leq& C ||f||_{X_{*,t}} \\
||\Tel f||_{X_{*,t}} \leq&  C ||f||_{X_t}^{q_{el}} \\
||\Tel_* g||_{X_{t}} \leq&  C ||g||^{p_{el}}_{X_{*,t}} 
\end{align*} 
\end{lem}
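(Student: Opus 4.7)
The plan is short: the first two estimates in Lemma \ref{bddops} are literal restatements of Lemma \ref{WI} once one unpacks Definition \ref{def:XtYt}, and the remaining two reduce to these by checking some exponent arithmetic. So the entire proof is essentially bookkeeping on the weighted norms.

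First, I would verify that
\[ \|\Tnk f\|_{Y_{*,t}} = \left(\int_{\M} |\Tnk f|^{q_t} w_*^{tq_t/q_{el}}\, d\lambda_{\theta^\perp}\, d\theta\right)^{1/q_t} \]
coincides with the left-hand side of the first inequality in Lemma \ref{WI}, yielding $\|\Tnk f\|_{Y_{*,t}} \leq C\|f\|_{X_t}$. Similarly the second inequality of Lemma \ref{WI}, read against Definition \ref{def:XtYt}, gives $\|\Tnk^* g\|_{Y_t} \leq C\|g\|_{X_{*,t}}$. That handles the first two lines of the lemma.

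For $\|\Tel f\|_{X_{*,t}} \leq C\|f\|_{X_t}^{q_{el}}$, I would expand
\[ \|\Tel f\|_{X_{*,t}}^{q'_t} = \int_\M |\Tnk f|^{q_{el} q'_t} w_*^{tq'_t}\, d\lambda_{\theta^\perp}\, d\theta \]
and invoke the identity $q'_t = q_t/q_{el}$, which follows directly from $q_{el} = q_0 - 1$ and $q'_t = q_0/((q_0-1)(1-t))$. This identity simultaneously matches the exponent of $|\Tnk f|$ (since $q_{el} q'_t = q_t$) and the weight exponent (since $tq'_t = tq_t/q_{el}$) to those appearing inside $\|\Tnk f\|_{Y_{*,t}}^{q_t}$. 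Raising to the $1/q'_t$ power and using $q_t/q'_t = q_{el}$ yields $\|\Tel f\|_{X_{*,t}} = \|\Tnk f\|_{Y_{*,t}}^{q_{el}}$, which combined with the first bound gives the third estimate. The bound on $\Tel_*$ is the mirror image, resting on the analogous identity $p'_t = p_{el}\cdot p_t$ coming from $p_{el} = 1/(p_0-1)$; one finds $\|\Tel_* g\|_{X_t} = \|\Tnk^* g\|_{Y_t}^{p_{el}}$ and then applies the second bound.

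The main obstacle, such as it is, is purely notational: keeping the primed, starred, and $t$-indexed exponents consistent. The spaces $X_t, Y_t, X_{*,t}, Y_{*,t}$ appear to have been defined precisely so that the pointwise-power maps $f \mapsto f^{q_{el}}$ and $g \mapsto g^{p_{el}}$ carry $Y_{*,t} \to X_{*,t}$ and $Y_t \to X_t$ by raising norms to the corresponding powers; once that observation is made, Lemma \ref{bddops} is a formal consequence of Lemma \ref{WI}.
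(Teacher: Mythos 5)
Your proposal is correct and follows essentially the same route as the paper: the first two bounds come from Lemma \ref{WI} via Definition \ref{def:XtYt}, and the last two reduce to them through the exponent identities $q_{el}q'_t = q_t$ and $p_{el}p_t = p'_t$, which make $\|\Tel f\|_{X_{*,t}} = \|\Tnk f\|_{Y_{*,t}}^{q_{el}}$ and $\|\Tel_* g\|_{X_t} = \|\Tnk^* g\|_{Y_t}^{p_{el}}$. One small omission: Lemma \ref{WI} is stated only for nonnegative functions, so to pass to general $f$ you should make explicit the pointwise bounds $|\Tnk f| \leq \Tnk|f|$ and $|\Tnk^* g| \leq \Tnk^*|g|$ (with $\|f\|_{X_t} = \||f|\|_{X_t}$), as the paper does.
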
 
\begin{proof} 

 As $|\Tnk f|\leq \Tnk|f|$, the first estimate follows immediately from  Lemma \ref{WI}. The proof of other two estimates is similar, but requires a bit more checking of numerology.  By definition, 
\begin{equation*}
||\Tel f||_{X_{*,t}}=\left(\int_{\M}|\Tnk f|^{q_{el}q'_t}w_*^{t q'_t} \right)^{1/q'_t}.  %=\left(\int_{\M}|\Tel f|^{q'_t}w_*^{t q'_t}\right)^{1/q'_t} 
\end{equation*}
Using that $q_{el} = q_0-1$, we find $q'_0=\frac{q _0}{q_{el}}$ and hence $q_{el}q_t'=q_t$. Again using that $|\Tnk f|\leq \Tnk |f|$ and Lemma \ref{WI}, 
\begin{equation*}
||\Tel f||_{X_{*,t}}\leq\left(\int_{\M}(\Tnk | f|)^{q_t}w_*^{t\frac{q_t}{q_{el}}} \right)^{q_{el}/q_t} \leq C \left(  \int_{\R^n} |f|^{p_t}w^{tp_t}\right)^{q_{el}/p_t}\leq C||f||_{X_t}^{q_{el}}. 
\end{equation*}

Similarly,
\begin{equation*}
||\Tel_* g||_{X_{t}} %=\left(\int_{\R^n}|\Tel_* g|^{p_t}w^{t p_t}\right)^{1/p_t}
=\left(\int_{\R^n}|\Tnk^*g|^{p_{el}p_t}w^{t p_t}\right)^{1/p_t}.
\end{equation*}
Here, $p_{el} = \frac{1}{p_0-1}$, so $p'_0=p _0p_{el}$ and hence $p_{el}p_t=p_t'$. Again, $|\Tnk ^*g|\leq \Tnk |g|$ so by Lemma \ref{WI}, 
\begin{multline*}
||\Tel_* g||_{X_{t}}%=\left(\int_{\R^n}|T^*_{n,k} g|^{p'_t}w^{t p'_t/p_el}\right)^{p_{el}/p'_t}
 \leq \left(\int_{\R^n}(\Tnk^* |g|)^{p'_t}w^{t p'_t/p_el}\right)^{p_{el}/p'_t} \\
 \leq C \left(  \int_\M |g|^{q'_t}v^{-tq'_t}\right)^{p_{el}/q'_t}=  C ||g||_{X_{*,t}}^{p_{el}}. \qedhere
\end{multline*}
\end{proof} 

Finally we record some properties of these weighted spaces, which are direct consequences of the definitions and H\"older's inequality. 
\begin{lem}\label{littlethings} The following statements hold for the spaces $X_t$.  They hold as well when $X_t$ is replaced by $Y_t$, $X_{*,t}$ or $Y_{*,t}$. 
\begin{enumerate}
\item If $\alpha<\beta$ then $X_\beta\subset X_\alpha$. In particular, there exists a constant $C$ such that for all $0\leq\alpha\leq\beta\leq 1$,  for all $f\in X_\beta$, 
\[ \|f\|_{X_\alpha}\leq C \|f\|_{X_\beta}.\] 
\item Let $0\leq \alpha\leq\gamma\leq \beta <1 $. Let $\gamma=\theta\alpha+(1-\theta)\beta$. Then, 
\[  \|f\|_{X_{\gamma}}\leq  \|f\|_{X_{\alpha}}^\theta \|f\|_{X_{\beta}}^{1-\theta}\] 

\end{enumerate} 
\end{lem}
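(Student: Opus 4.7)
The plan is to rewrite each weighted norm as an ordinary $L^r$ norm of a weighted function: $\|f\|_{X_t} = \|fw^t\|_{L^{p_t}(\R^n)}$, $\|f\|_{Y_t} = \|fw^{t/p_{el}}\|_{L^{p_t'}(\R^n)}$, and analogously $\|g\|_{X_{*,t}} = \|gw_*^t\|_{L^{q_t'}(\M)}$, $\|g\|_{Y_{*,t}} = \|gw_*^{t/q_{el}}\|_{L^{q_t}(\M)}$. The structural observation making both parts work is that each of $1/p_t$, $1/p_t'$, $1/q_t$, $1/q_t'$ is affine in $t$; for instance $1/p_t = (1-t)/p_0$. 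This reduces both claims to standard Lebesgue-space arguments.

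For part (2), I would apply a Lyapunov-type log-convexity argument, illustrated for $X_t$ (the other three cases are identical up to notation). Writing $\gamma = \theta\alpha + (1-\theta)\beta$ and factoring $fw^\gamma = (fw^\alpha)^\theta (fw^\beta)^{1-\theta}$, the affine relation $1/p_\gamma = \theta/p_\alpha + (1-\theta)/p_\beta$ allows me to invoke generalized H\"older with exponents $p_\alpha/\theta$ and $p_\beta/(1-\theta)$, yielding $\|f\|_{X_\gamma} \leq \|f\|_{X_\alpha}^\theta \|f\|_{X_\beta}^{1-\theta}$ directly.

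For part (1), assuming $\alpha < \beta$, I would apply a single H\"older with conjugate exponents $p_\beta/p_\alpha$ and $p_\beta/(p_\beta - p_\alpha)$ to the splitting $|f|^{p_\alpha} w^{\alpha p_\alpha} = \bigl(|f|^{p_\alpha} w^{\beta p_\alpha}\bigr) \cdot w^{(\alpha-\beta)p_\alpha}$. The first factor yields $\|f\|_{X_\beta}^{p_\alpha}$ after raising to the appropriate power, and the second yields a residual integral of a pure power of $w$. A brief algebraic check gives $p_\alpha p_\beta/(p_\beta - p_\alpha) = p_0/(\beta - \alpha)$, so the residual exponent on $\langle x \rangle$ collapses to $-p_{el}(n-k)p_0 = -nq_0/(q_0-1)$, which is strictly less than $-n$ whenever $q_0 > 1$. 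Hence the residual integral converges with a bound independent of $\alpha, \beta$. For $Y_t$, $X_{*,t}$, $Y_{*,t}$ the analogous residual exponents become $-(n-k)p_0'$, $-nq_0'$, and $-nq_0/(q_0-1)$; substituting $p_0 = nq_0/(n-k+kq_0)$ reduces each convergence condition (over $\R^n$ or $\R^{n-k}$) to the standing hypotheses $q_0 \in (1,n+1]$ and $k < n$. There is no substantial obstacle beyond this numerology, which is more bookkeeping than anything: the affine-in-$t$ structure causes the $\alpha, \beta$ dependence to cancel cleanly in the residual exponents.
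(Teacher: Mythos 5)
Your proof is correct and is precisely the ``direct consequences of the definitions and H\"older's inequality'' argument that the paper invokes without spelling out: the key structural fact, that $1/p_t$, $1/p_t'$, $1/q_t$, $1/q_t'$ are each affine in $t$, makes the generalized H\"older step for part~(2) and the single H\"older step for part~(1) go through cleanly, and your numerology (residual exponent $-p_0$ on $w$, yielding $\langle x\rangle^{-nq_0/(q_0-1)}$ with $nq_0/(q_0-1)>n$, etc.) checks out and gives a constant depending only on $q_0,n,k$ as required.
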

  Let $A_p$ denote the standard Muckenhoupt classes of weights. Whenever $w^{tp_t}\in A_{p_t}$ operators of Calder\'on-Zygmund type are bounded on $L^{p_t}(w^{tp_t})=X_t $ (see for instance \cite{Stein}, pg 205). 

\begin{lem}\label{ApWeights} For all sufficiently small $t$, the weight $w= (1+|x|^2)^{\frac{p_{el}(n-k)}{2}}$  satisfies $w^{tp_t}\in A_{p_t}(\R^n)$.
\end{lem}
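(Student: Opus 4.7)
The plan is to reduce the claim to the classical characterization of power weights in the Muckenhoupt class $A_p$, and then to verify a one-line continuity inequality in $t$.

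The first step is to record the standard fact that for $\beta\geq 0$ and $1<p<\infty$, the weight $\langle x\rangle^\beta$ lies in $A_p(\R^n)$ if and only if $\beta < n(p-1)$. The paper uses $\langle x\rangle$ in place of $|x|$, but since $\langle x\rangle\geq 1$ everywhere and $\langle x\rangle\sim |x|$ for $|x|\geq 1$, the local behavior is harmless and only the tail — which agrees with that of the pure power weight — is relevant. A brief verification would split an $A_p$-average on a ball $B(x_0,r)$ into the case $|x_0|\leq 2r$, where up to constants one recovers the standard estimate for $|x|^\beta$, and the case $|x_0|>2r$, where both $\langle x\rangle^\beta$ and its dual $\langle x\rangle^{-\beta/(p-1)}$ are essentially constant on $B(x_0,r)$. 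Alternatively one can simply cite this from a standard reference on Muckenhoupt weights.

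The second step applies the preceding with $\beta=(n-k)p_{el}tp_t$, so the conclusion reduces to the inequality $(n-k)p_{el}tp_t<n(p_t-1)$ for all $t$ in some interval $[0,t_0)$. Substituting $p_{el}=1/(p_0-1)$ and $p_t=p_0/(1-t)$ and cancelling the common factor $1/(1-t)$ simplifies this to $(n-k)tp_0/(p_0-1)<n(p_0-1+t)$. At $t=0$ the left side equals $0$ while the right side equals $n(p_0-1)>0$; both sides are continuous in $t$, so the strict inequality persists on some interval $[0,t_0)$, which is exactly what is required.

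The only step needing any care is confirming that the localized weight $\langle x\rangle^\beta$ inherits the same $A_p$ range $\beta<n(p-1)$ as the pure power $|x|^\beta$; once this is granted, the rest is elementary arithmetic in the exponents, so I do not expect a serious obstacle here.
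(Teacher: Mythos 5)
Your proof is correct. The only substantive difference from the paper's argument is the $A_p$ criterion invoked. You use the sharp characterization of the (radial, polynomial-like) power weight, namely that $\langle x\rangle^\beta\in A_p(\R^n)$ for $\beta\ge0$ if and only if $\beta<n(p-1)$, and then observe that the resulting inequality in $t$ holds strictly at $t=0$ and so persists by continuity. The paper instead quotes the sufficient condition from Stein for weights of the form $|P(x)|^s$ with $P$ a polynomial: $|P|^s\in A_p$ provided $-1<s\deg(P)<p-1$. Applied to $P(x)=1+|x|^2$ this gives the stronger requirement $p_{el}(n-k)\,t\,p_t<p_t-1$ (your condition has an extra factor of $n$ on the right), and the paper then solves this explicitly for a threshold $t_0=\frac{(p_0-1)^2}{p_0(n-k-1)+1}$. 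Both routes yield a positive $t_0$, which is all the lemma requires; your version gives a slightly larger admissible range of $t$, while the paper's is a one-line citation with an explicit closed-form bound. Either way the verification that the threshold is positive (via $p_0>1$ and $k\le n-1$, or via your continuity-at-$t=0$ observation) is immediate.
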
 
\begin{proof} 
For any polynomial $u$, $|u|^s\in A_p$ if $-1<s(deg(u))<p-1$ (\cite{Stein}, pg. 219). Thus for $t\geq0$  $w^{tp_t}\in A_{p_t}$ if  
\[ p_{el}(n-k)tp_t< p_t-1.\]
  Using that $p_t=\frac{p_0}{1-t}$, this holds whenever
% \[\frac{(n-k)t}{p_0-1}< 1-1/ p_t< 1-\frac{1-t}{p_0} < \frac{p_0-1+t}{p_0} \] 
%\[ t\left( (k+1)-\frac{1}{p_0} \right) < \frac{p_0-1}{p_0} \] 
%\[ t < \frac{p_0-1}{p_0}\left( \frac{p_0}{(k+1)p_0-1} \right) \] 
\[ 
 t<\frac{(p_0-1)^2}{p_0(n-k-1)+1}.
\] 
As $p_0>1$  and $1\leq k\leq n-1$ this bound is strictly positive.

\end{proof} 

We will also need similar results for the weights $w_*$, adapted to the spaces $Y_{*,t}$ and $X_{*,t}$. 

\begin{lem} \label{ApWeightsThetaPerp} Let $ w_*(\theta,y)=\langle y \rangle^{n}$.  For all sufficiently small $t$, for every $\theta\in \mathcal{G}_{n,k}$, $w_*^{tq'_t}(\theta,y) \in A_{q'_t}(\theta^\perp)$ and  $w_*^{tq_t/q_{el}}(\theta,y) \in A_{q_t}(\theta^\perp)$. 
% . Additionally, for each sufficiently small $t$, for every $\theta\in \mathcal{G}_{n,k}$, $w_*^{tq_t/q_{el}}(\theta,y) \in A_{q_t}(\theta^\perp)$. 
\end{lem}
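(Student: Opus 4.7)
The plan is to follow the argument of Lemma \ref{ApWeights} essentially verbatim, applied to each fiber $\theta^\perp$ separately. Fix $\theta \in \mathcal{G}_{n,k}$ and identify $\theta^\perp$ with $\R^{n-k}$; under this identification $w_*(\theta,y) = (1+|y|^2)^{n/2}$ depends only on $y$, and both weights $w_*^{tq'_t}(\theta,\cdot)$ and $w_*^{tq_t/q_{el}}(\theta,\cdot)$ are powers of the polynomial $u(y) = 1+|y|^2$, which has degree $2$ on $\R^{n-k}$. The criterion cited in the proof of Lemma \ref{ApWeights}, namely that $|u|^s \in A_p(\R^{n-k})$ whenever $-1 < s\cdot\deg(u) < p-1$, reduces the task to verifying the positive-side inequalities
\[ n t q'_t < q'_t - 1 \qquad\text{and}\qquad \frac{ntq_t}{q_{el}} < q_t - 1. \]

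The next step is to solve these inequalities for $t$. Substituting $q_t = q_0/(1-t)$ and $q'_t = q'_0/(1-t)$ and clearing denominators, the first becomes $t < (q'_0-1)/(nq'_0 - 1)$ and the second becomes $t < q_{el}^2/(nq_0 - q_{el})$. Both right-hand sides are strictly positive: since $q_0 \in (1,n+1]$ one has $q'_0 > 1$, hence $q'_0 - 1 > 0$ and $nq'_0 > 1$; likewise $q_{el} = q_0 - 1 > 0$ and $nq_0 > q_0 - 1 = q_{el}$. Choosing $t$ positive and smaller than the minimum of the two thresholds yields both $A_p$ conclusions. Since these thresholds depend only on $q_0$ and $n$, and since the ambient dimension $n-k$ of $\theta^\perp$ and the defining polynomial $1+|y|^2$ are also $\theta$-independent, the $A_p$ bounds hold uniformly across $\theta \in \mathcal{G}_{n,k}$.

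There is no genuine obstacle; the argument is a direct bookkeeping translation of Lemma \ref{ApWeights}, with the two cases $(p_t, w^{tp_t})$ on $\R^n$ replaced by $(q'_t, w_*^{tq'_t})$ and $(q_t, w_*^{tq_t/q_{el}})$ on $\theta^\perp \cong \R^{n-k}$. The only point needing care is applying the polynomial $A_p$ criterion with the correct exponent in each of the two cases and confirming that the resulting $t$-thresholds are strictly positive, which is guaranteed by the standing hypothesis $q_0 \in (1,n+1]$.
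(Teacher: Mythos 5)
Your proof is correct and follows the same route as the paper's: apply the polynomial $A_p$ criterion fiberwise on $\theta^\perp\cong\R^{n-k}$ to $u(y)=1+|y|^2$ (degree 2), then solve the two inequalities $ntq'_t<q'_t-1$ and $ntq_t/q_{el}<q_t-1$ for $t$ after substituting $q_t=q_0/(1-t)$ and $q'_t=q'_0/(1-t)$. Your thresholds $t<(q'_0-1)/(nq'_0-1)$ and $t<q_{el}^2/(nq_0-q_{el})$ are algebraically the same as the paper's $t<1/(q_0n-q_0+1)$ and $t<(q_0-1)^2/(q_0(n-1)+1)$, and your remark on $\theta$-uniformity is a sensible explicit addition.
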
 
\begin{proof} 
 Using again that for any polynomial $u$, $|u|^s\in A_p$ if $-1<s(deg(u))<p-1$, for $t\geq0$  $(w_*)^{tq'_t}\in A_{q_t'}$ if $ntq_t'< q'_t-1$. Using that $q'_t=\frac{q_0}{(q_0-1)(1-t)}$, gives that  $(w_*)^{tq'_t}\in A_{q_t'}$ if 
%\[ nt< 1-1/q'_t < 1-\frac{(q_0-1)(1-t)}{q_0} < \frac{q_0- (q_0-1)+t(q_0-1)}{q_0}\] 
%\[t \left( n- \frac{q_0-1}{q_0}\right) < \frac{1}{q_0}\] 
\[t < \frac{1}{ q_0 n- q_0+1}.\] 
% \[ t<\frac{p_0-1}{(k+1)p_0-1}.\] 
  
Secondarily, $(w_*)^{tq_t/q_{el}}\in A_{q_t}$ if $nt\frac{q_t}{q_{el}}< q_t-1$. Using that $q_t=\frac{q_0}{1-t}$  and $q_{el}=q_0-1$ 
%\[ t\frac{n}{q_{el}}< 1-1/q_t <1 -\frac{1-t}{q_0}< \frac{q_0 -1 +t}{q_0}  \] 
%\[ t \left( \frac{n}{q_{el}} - \frac{1}{q_0} \right) <  \frac{q_0 -1}{q_0}  \] 
%\[ t \left( \frac{nq_0 - q_0+1 }{(q_0-1)q_0} \right) <  \frac{q_0 -1}{q_0}  \] 
%\[ t<  \frac{(q_0-1)^2}{nq_0 - q_0+1 } \] 
 gives that  $(w_*)^{tq_t/q_{el}}\in A_{q_t}$ if 
\[ t< \frac{(q_0-1)^2} {q_0(n-1)+1 }. \] 
Again note that each of these bounds is strictly positive. 

\end{proof}

\section{Smoothing} 
The $k$-plane transform of a function benefits from additional smoothness in the translation variable $y$. Recall that for $\gamma\in\R$, 
$\derivsy{\gamma} $ denotes the Fourier multiplier for functions on $\M$,  defined by $\widehat{\derivsy{\gamma} g(\theta,y)} = |\eta|^\gamma\widehat{g}(\theta,\eta)$ where the Fourier transform is taken only in the translation variable $y.$

\begin{lem}[Strichartz \cite{S81}\footnote{Strichartz actually proves a stronger mixed norm estimate. This follows from his, using H\"older's inequality and the observation that $p\leq 2$ ensures that $p'\geq p$. %    
}] \label{StrichartzEstimate} 
For $p\in(1,2]$, there exists a constant $C$ such that 
\[ \| \derivsy{k/p'}  \Tnk f\|_{L^p(\M)}\leq C \|f\|_{L^p(\R^n)}. \] 
\end{lem}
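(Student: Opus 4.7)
The plan is to derive this inequality from the stronger mixed-norm version that Strichartz actually proves in \cite{S81}. Concretely, for $p \in (1,2]$ Strichartz gives
\[
\left( \int_{\mathcal{G}_{n,k}} \|\derivsy{k/p'} \Tnk f(\theta,\cdot)\|_{L^p(\theta^\perp)}^{p'}\, d\theta \right)^{1/p'} \leq C\|f\|_{L^p(\R^n)},
\]
that is, control of the $L^{p'}_\theta L^p_y$ mixed norm of $\derivsy{k/p'} \Tnk f$ by $\|f\|_{L^p}$. I would simply quote this as input: it is where all of the analytic work lies, including the sharp regularizing gain of $k/p'$ derivatives in the translation variable and the interpolation/Fourier-slice machinery that justifies the outer $L^{p'}$ in $\theta$.

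The second and final step would be to collapse the mixed norm down to $L^p(\M)$ by exploiting the compactness of $\mathcal{G}_{n,k}$. Because $d\theta$ is a probability measure, and because the hypothesis $p \leq 2$ forces $p \leq p'$, Hölder's inequality gives the embedding $L^{p'}(d\theta) \hookrightarrow L^p(d\theta)$, that is $\|h\|_{L^p(d\theta)} \leq \|h\|_{L^{p'}(d\theta)}$ for every scalar function $h$ on $\mathcal{G}_{n,k}$. I would apply this with
\[
h(\theta) = \|\derivsy{k/p'} \Tnk f(\theta,\cdot)\|_{L^p(\theta^\perp)}
\]
and use Fubini to rewrite $\|h\|_{L^p(d\theta)}^p$ as the unmixed integral $\int_{\mathcal{G}_{n,k}}\int_{\theta^\perp}|\derivsy{k/p'} \Tnk f|^p\,d\lambda_{\theta^\perp}\,d\theta = \|\derivsy{k/p'}\Tnk f\|_{L^p(\M)}^p$. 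Chaining the two inequalities then yields
\[
\|\derivsy{k/p'} \Tnk f\|_{L^p(\M)} \leq \|h\|_{L^{p'}(d\theta)} \leq C\|f\|_{L^p(\R^n)},
\]
which is exactly the claim.

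There is essentially no obstacle here beyond locating Strichartz's mixed-norm bound in \cite{S81} with the correct ordering (the outer integration in $\theta$ must carry the exponent $p'$, not $p$, so that the probability-measure embedding can be applied on the compact Grassmannian while the non-compact direction $\theta^\perp$ retains its $L^p$ structure). Once that citation is in hand, the remaining argument is a one-line application of Hölder on a probability space.
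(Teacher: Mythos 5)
Your argument is correct and is precisely the one the paper intends, as signaled by the footnote: quote Strichartz's mixed-norm estimate with outer exponent $p'$ in $\theta$ and inner exponent $p$ in $y$, then use that $d\theta$ is a probability measure on the compact Grassmannian together with $p \leq p'$ (from $p \leq 2$) to apply Hölder and downgrade $L^{p'}(d\theta)$ to $L^p(d\theta)$, yielding the unmixed $L^p(\M)$ bound. No gap.
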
 

In our context, some derivative of the $k$-plane transform of a function in the weighted space $X_t$, lives in $L^{q_0}(\M)$. % In short, extra decay guarantees a well-behaved derivative. 
	
\begin{lem}\label{smoothing} Let $q_0\in(1,n+1]$ such that $p_0=\frac{nq_0}{n-k+kq_0}$  satisfies $p_{0}\in(1,2]$.  Let $t>0$.  There exists $\gamma=\gamma(t)>0$ such that for any $f\in X_t$, $\derivsy{\gamma} ( \Tnk f)\in L^{q_0}(\M)$.  In particular, there exists a positive finite constant $C$ such that for any $f\in X_t$, 
\[ \| \derivsy \gamma( \Tnk f) \|_{L^{q_0}(\M)} \leq C \| f\|_{X_t} . \]
\end{lem}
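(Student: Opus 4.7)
The plan is to combine Strichartz's smoothing estimate at $p_0$ (Lemma \ref{StrichartzEstimate}) with the weighted boundedness $\Tnk : X_t \to Y_{*,t}$ (Lemma \ref{bddops}) via Stein's complex interpolation, using the Stein--Weiss formula on the intermediate weighted spaces, and then control the resulting source norm by $\|f\|_{X_t}$ through H\"older's inequality. By Lemma \ref{littlethings}(1), it suffices to prove the lemma for $t$ small, which we henceforth assume; in particular, Lemma \ref{ApWeightsThetaPerp} guarantees that $w_*^{tq_t/q_{el}}$ is an $A_{q_t}$ weight on $\theta^\perp$ uniformly in $\theta \in \G$.

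The principal object is the analytic family of operators
\[ T_z f = \derivsy{(1-z)k/p_0'} \Tnk f, \qquad 0 \leq \Re z \leq 1. \]
At $\Re z = 0$, the factor $\derivsy{-ibk/p_0'}$ is a unit-modulus Fourier multiplier in the $y$ variable, bounded on $L^{p_0}(\M)$ fiberwise in $\theta$ by H\"ormander--Mikhlin with polynomial growth in $|b|$, so Strichartz's estimate yields $T_{ib} : L^{p_0}(\R^n) \to L^{p_0}(\M)$ admissibly. At $\Re z = 1$, the same multiplier is bounded on $L^{q_t}(w_*^{tq_t/q_{el}})$ fiberwise in $\theta$ thanks to the $A_{q_t}$ property, again with polynomial growth, so Lemma \ref{bddops} gives $T_{1+ib} : X_t \to Y_{*,t}$ admissibly.

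Stein's complex interpolation together with the Stein--Weiss formula for weighted $L^p$ spaces then produces, for each $\theta \in (0,1)$,
\[ T_\theta : L^{p}(\R^n,\, w^{t\theta p}\, dx) \longrightarrow L^{r}(\M,\, w_*^{t\theta r/q_{el}}\, d\lambda_{\theta^\perp}\, d\theta), \]
with $1/p = (1-\theta)/p_0 + \theta/p_t$ and $1/r = (1-\theta)/p_0 + \theta/q_t$. Since $p_0 < q_0 \leq q_t$, we may choose $\theta \in (0,1)$ so that $r = q_0$; set $\gamma = (1-\theta) k/p_0' > 0$. Because $w_* \geq 1$ pointwise, the weighted target norm dominates the unweighted $L^{q_0}(\M)$ norm, and hence $\|\derivsy{\gamma}\Tnk f\|_{L^{q_0}(\M)} \leq C\|f\|_{L^{p}(w^{t\theta p})}$. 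To control the source, write $f w^{t\theta} = (fw^t) \cdot w^{-t(1-\theta)}$ and apply H\"older's inequality with exponents $p_t/p$ and $u$ determined by $1/u = t(1-\theta)/p_0$; the required integrability reduces to $\int w^{-p_0}\, dx < \infty$, equivalently $p_0 < n/k$, which holds for every admissible pair since $p_0 = nq_0/(n-k+kq_0) < n/k$ whenever $k < n$.

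The main technical hurdle is the interaction between the $y$-Fourier multiplier and the weight $w_*$ at the $\Re z = 1$ endpoint, which is resolved by invoking the fiberwise $A_{q_t}$ property from Lemma \ref{ApWeightsThetaPerp}. This forces a reduction to small $t$ via the monotonicity of the $X_t$ spaces, but the reduction is harmless since we only seek existence of some $\gamma > 0$.
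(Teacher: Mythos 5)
Your proof is correct, but it implements the interpolation in a genuinely different way from the paper, and the paper's scheme is leaner. The paper fixes the \emph{source} space $X_t$ throughout, observes that $Y_{*,t}\hookrightarrow L^{q_t}(\M)$ simply because $w_*\geq 1$, and interpolates the analytic family $\derivsy{zk/p'}\Tnk$ only on the \emph{target} side, between the two unweighted target spaces $L^{q_t}(\M)$ (at $z=0$) and $L^{p_0}(\M)$ (at $z=1$). Because the source never changes and the boundary targets are unweighted, the needed uniformity-in-$\operatorname{Im} z$ is classical H\"ormander--Mikhlin fiberwise, no $A_p$ theory and no extra H\"older step arise, and the argument runs for every $t\in(0,1]$ at once. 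You instead run Stein--Weiss with different source/target pairs at each edge ($L^{p_0}\to L^{p_0}(\M)$ vs.\ $X_t\to Y_{*,t}$), which lands on an interpolated weighted source $L^p(w^{t\theta p})$ with $p<p_t$; that obliges you to (i) invoke Lemma~\ref{ApWeightsThetaPerp} so that the imaginary powers $\derivsy{-ibk/p_0'}$ stay bounded on $Y_{*,t}$, forcing a reduction to small $t$, and (ii) add the H\"older step $\|f\|_{L^p(w^{t\theta p})}\leq \|fw^t\|_{L^{p_t}}\|w^{-t(1-\theta)}\|_{L^s}$ with $1/s=1/p-1/p_t=t(1-\theta)/p_0$, whose finiteness boils down to $\int w^{-p_0}<\infty$, i.e.\ $p_0<n/k$, which you correctly check holds for all admissible $(n,k,q_0)$. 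Both routes are sound; yours trades the simple $w_*\geq 1$ embedding for a fuller weighted-interpolation calculation, purchasing nothing extra here but demonstrating that the estimate survives even if one insists on carrying the weight on the target through the interpolation.
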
  

\begin{proof} 

Let $t>0$. Let $f\in X_t$. From Lemma \ref{bddops}, $\| \Tnk f\|_{Y_{*,t}}\leq C\|f\|_{X_t}$. It follows from the definition of  $Y_{*,t}$ that there exists $r>q_0$ such that the space   $Y_{*,t}$ embeds continuously into $L^{r}(\M)$. Thus by Lemma \ref{bddops} for this $r>q_0$, 
\[  
 \| \Tnk f\|_{L^{r}(\M)}\leq C\|f\|_{X_t}.
\] 

As $p_0\in(1,2]$, Lemma \ref{StrichartzEstimate} and Lemma \ref{littlethings} yields
\[ 
\| \derivsy{k/p'}  \Tnk f\|_{L^{p_0}(\M)}\leq C \|f\|_{L^{p_0}(\R^n)}\leq C \|f\|_{X_t}.
\] 
Using the analytic family of operators $\derivsy{zk/p'} \Tnk$ to interpolate between these two estimates  gives that for $\theta\in[0,1]$, 
\[ \| \derivsy{\theta k/p'}  \Tnk f\|_{L^{Q(\theta)}(\M)}\leq C \| f\|_{X_t} \] 
where $Q(\theta)^{-1}=\frac{1}{p_0}\theta+\frac{1}{r}(1-\theta)$. As $q_{0}>1$, $p_0=\frac{n}{n-k+kq_0}(q_0)<q_0$. Therefore there exists $\theta\in(0,1)$, such that $Q(\theta)=q_0$. 

\end{proof}

\section{Multilinear Bounds}
For the rest of the paper we require that $p_0$ and $q_0$ are chosen such that $p_{el},q_{el}\in\N$.  
\begin{defn} Let $ \vec{f}=\{f_{i,j}\} \text{  for  } i\in[1,p_{el} ], j\in[1,q_{el} ] $. Define the multilinear operator $\vec{S}$ by: 
\[ \vec{S}(\vec{f})= \prod_{i=1}^{p_{el}} T^*_{n,k}\left( \prod_{j=1}^{q_{el}}\Tnk (f_{i,j})\right) . \] 
\end{defn} 
Thus $\Sel(f)=\vec{S}(f,\ldots,f)$.
\begin{lem}\label{basicmle}  For each $ \vec{f}=\{f_{i,j}\}$  for  $i\in[1,p_{el} ]$ and $ j\in[1,q_{el} ] $,
\[ |\vec{S}(\vec{f})|\leq \prod_{i=1}^{p_{el}}\prod_{j=1}^{q_{el}}\Sel(|f_{i,j}|)^{\frac{1}{p_{el}q_{el}}} . \] 
% Equality holds in this expression if $f_{i,j}=\alpha_{i,j} f$ for $f$ and $\alpha_{i,j}$ nonnegative. 
\end{lem}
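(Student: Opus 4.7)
The argument reduces to a single application of Hölder's inequality, made possible by the fact that $d\theta$ is a \emph{probability} measure on $\G$.

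First, using the elementary pointwise bound $|\Tnk h|\le \Tnk|h|$, I would replace each $f_{i,j}$ by $|f_{i,j}|$ and thereby reduce to the case where all $f_{i,j}$ are nonnegative. This is legitimate because $\vec{S}$ is a composition of operators with nonnegative kernels applied to products of $\Tnk f_{i,j}$'s, and $|\prod_j \Tnk f_{i,j}|\le \prod_j \Tnk|f_{i,j}|$ followed by $|\Tt(\cdot)|\le \Tt|\cdot|$ transfers the absolute value inside.

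Next, fix $x\in\R^n$ and one index $i\in[1,p_{el}]$, and consider a single factor
\[
\Tt\!\left(\prod_{j=1}^{q_{el}}\Tnk|f_{i,j}|\right)(x)
=\int_{\G}\prod_{j=1}^{q_{el}} \Tnk|f_{i,j}|\bigl(\theta,P(x,\theta^\perp)\bigr)\,d\theta.
\]
Since $d\theta$ is a probability measure, the generalized Hölder inequality with $q_{el}$ factors, each assigned exponent $q_{el}$ (so that $\sum_{j=1}^{q_{el}} 1/q_{el}=1$), yields
\[
\int_{\G}\prod_{j=1}^{q_{el}} \Tnk|f_{i,j}|\bigl(\theta,P(x,\theta^\perp)\bigr)\,d\theta
\le \prod_{j=1}^{q_{el}}\left(\int_{\G}\bigl(\Tnk|f_{i,j}|\bigr)^{q_{el}}\bigl(\theta,P(x,\theta^\perp)\bigr)\,d\theta\right)^{1/q_{el}}.
\]
The expression in parentheses is exactly $\Tt\bigl((\Tnk|f_{i,j}|)^{q_{el}}\bigr)(x)=\Sel(|f_{i,j}|)(x)^{1/p_{el}}$ by the definition of $\Sel$, so this single factor is bounded by $\prod_{j=1}^{q_{el}}\Sel(|f_{i,j}|)(x)^{1/(p_{el}q_{el})}$.

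Finally, taking the product over $i\in[1,p_{el}]$ of these $p_{el}$ estimates gives
\[
|\vec{S}(\vec{f})(x)|\le \prod_{i=1}^{p_{el}}\prod_{j=1}^{q_{el}}\Sel(|f_{i,j}|)(x)^{\frac{1}{p_{el}q_{el}}},
\]
which is the claim. There is no real obstacle here; the only subtlety worth flagging is that one must use Hölder on $\G$ rather than attempt it inside the $\Tnk$ integral (which is over a $k$-plane of infinite Lebesgue measure, where no such estimate is available without weights). The probability normalization of $d\theta$ is precisely what allows Hölder to produce no extraneous constants, so that the inequality is sharp on the diagonal $f_{i,j}\equiv f$ where both sides equal $\Sel(|f|)(x)$.
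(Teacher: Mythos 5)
Your proof is correct and follows essentially the same route as the paper: reduce to nonnegative $f_{i,j}$ by $|\Tnk h|\le\Tnk|h|$ and $|\Tt g|\le\Tt|g|$, apply the generalized Hölder inequality with exponents $(q_{el},\ldots,q_{el})$ to the integral over $\G$ defining $\Tt$, recognize the resulting factors as $\Sel(|f_{i,j}|)^{1/p_{el}}$, and take the product over $i$. One small remark: you attribute the absence of extraneous constants to $d\theta$ being a probability measure, but in fact Hölder's inequality with conjugate exponents produces no constants on any measure space; the probability normalization plays no role here.
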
 
\begin{proof} 
As both $|\Tnk^* (g)| \leq \Tnk^*(|g|)$ and $|\Tnk(f)| \leq \Tnk(|f|)$, 
\[ |\vec{S}(\vec{f}) | \leq   \prod_{i=1}^{p_{el}}  T^*_{n,k}\left( \prod_{j=1}^{q_{el}}\Tnk(|f_{i,j}|)\right). \]
Also, 
\[  T^*_{n,k}\left( \prod_{j=1}^{q_{el}}g_j \right)= \int_{\mathcal{G}_{n,k}} \prod_{j=1}^{q_{el}}g_j(\theta,P(x,\theta^\perp))\; d\theta \] 
By repeated applications of H\"older's inequality, 
\[  \Tnk^* \left( \prod_{j=1}^{q_{el}}g_j \right)\leq \prod_{j=1}^{q_{el}} \left( \int_{\G} g_j(\theta,P(x,\theta^\perp))^{q_{el}}\;d\theta \right)^{1/q_{el}} \leq  \prod_{j=1}^{q_{el}} (\Tnk^*(g_j^{q_{el}}) )^\frac{1}{q_{el}}.  \]
Applying this for each $i$ with $ g_j=\Tnk (|f_{i,j}|)$  gives 
\[ |\vec{S}(\vec{f}) | \leq  \prod_{i=1}^{p_{el}}  \prod_{j=1}^{q_{el}}  \Tnk^* \left([\Tnk(|f_{i,j}|)]^{q_{el}} \right)^\frac{1}{q_{el}} =   \prod_{i=1}^{p_{el}}\prod_{j=1}^{q_{el}}\Sel(|f_{i,j}|)^{\frac{1}{p_{el}q_{el}}} .\] 
\end{proof} 

 Lemma \ref{basicmle} and a weighted multilinear version of H\"older's inequality (Christ and Xue's Lemma 3.1, \cite{CX}) combine to give multilinear estimates for $\Sel$ which will be used in the following sections. 

\begin{lem}[Christ and Xue, \cite{CX}] \label{lemCX} Let $p_0\in[1,\infty)$, $t>0$, and $p_t=\frac{p_0}{1-t}$. Let $X_t\subset L^p(\R^n)$ with norm given by $||f||^{p_t}_{X_t}=\int_{\R^n}|f|^{p_t}w^{t p_t}$ for some measurable function $w\geq 1$. Let $A$ be any finite index set. Let $\theta_\alpha,t_\alpha\in[0,1]$ for each $\alpha\in A$. Suppose that $\sum_{\alpha\in A}\theta_\alpha=1$ and $1-t=\sum_{\alpha\in A} \theta_\alpha(1-t_\alpha)$.  Then for any nonnegative functions $\{f_\alpha\}_{\alpha\in A}$, 
\[ ||\prod_{\alpha\in A} f_\alpha^{\theta_\alpha}||_{X_t}\leq \prod_{\alpha\in A}|| f_\alpha||_{X_{t_\alpha}}^{\theta_\alpha} .\]
\end{lem}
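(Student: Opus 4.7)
The plan is to reduce this weighted multilinear inequality to the classical (unweighted) multilinear H\"older inequality on $L^{p_t}(\R^n)$ by absorbing the weight into each function. Since $||f||_{X_t}^{p_t} = \int_{\R^n}|fw^t|^{p_t}\,dx$, the map $f\mapsto fw^t$ is an isometric identification $X_t \to L^{p_t}(\R^n)$, and I would exploit this on both sides of the inequality simultaneously.

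Concretely, set $g_\alpha = f_\alpha w^{t_\alpha}$ so that each $g_\alpha \geq 0$ and $||g_\alpha||_{L^{p_{t_\alpha}}(\R^n)} = ||f_\alpha||_{X_{t_\alpha}}$. Then
\[
\Bigl\| \prod_{\alpha\in A} f_\alpha^{\theta_\alpha}\Bigr\|_{X_t} = \Bigl\| w^t \prod_{\alpha\in A} \bigl(g_\alpha w^{-t_\alpha}\bigr)^{\theta_\alpha}\Bigr\|_{L^{p_t}} = \Bigl\| w^{\,t-\sum_\alpha \theta_\alpha t_\alpha} \prod_{\alpha\in A} g_\alpha^{\theta_\alpha}\Bigr\|_{L^{p_t}} .
\]
The two hypotheses combine cleanly: $\sum_\alpha \theta_\alpha = 1$ together with $\sum_\alpha \theta_\alpha(1-t_\alpha) = 1-t$ forces $\sum_\alpha \theta_\alpha t_\alpha = t$, so the weight exponent collapses to zero and the right-hand side is simply $\|\prod_\alpha g_\alpha^{\theta_\alpha}\|_{L^{p_t}}$.

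It then remains to verify the classical multilinear H\"older inequality
\[
\Bigl\| \prod_{\alpha\in A} g_\alpha^{\theta_\alpha}\Bigr\|_{L^{p_t}} \leq \prod_{\alpha\in A} \|g_\alpha\|_{L^{p_{t_\alpha}}}^{\theta_\alpha},
\]
whose hypothesis is the reciprocal relation $\tfrac{1}{p_t} = \sum_\alpha \tfrac{\theta_\alpha}{p_{t_\alpha}}$. This is immediate from $p_s = p_0/(1-s)$, since $\sum_\alpha \tfrac{\theta_\alpha}{p_{t_\alpha}} = \tfrac{1}{p_0}\sum_\alpha \theta_\alpha(1-t_\alpha) = \tfrac{1-t}{p_0} = \tfrac{1}{p_t}$. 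The classical inequality itself follows by applying ordinary H\"older with the conjugate exponents $\{\theta_\alpha^{-1}\}$ (using $\sum_\alpha \theta_\alpha = 1$) to the factorization $\prod_\alpha (g_\alpha^{p_t})^{\theta_\alpha}$.

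There is no substantive obstacle: the argument is entirely a change of variables plus numerology, and the only content is recognizing that the two stated constraints on $\{\theta_\alpha\}$ and $\{t_\alpha\}$ are exactly what is needed to both eliminate the residual weight and match the Lebesgue exponents. Notably nothing about the structure of $w$ is used beyond measurability and positivity, so the same proof yields the analogous statement on $\M$ for the spaces $X_{*,t}$, $Y_{*,t}$, etc., with their respective weights.
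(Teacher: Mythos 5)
The paper does not prove this lemma; it cites Christ--Xue \cite{CX} and notes that the argument there extends verbatim to general weights. Your approach is the natural one and matches what such a proof must look like: absorb $w^{t_\alpha}$ into each $f_\alpha$, observe that the two constraints force the residual weight exponent $t-\sum_\alpha\theta_\alpha t_\alpha$ to vanish, check $\sum_\alpha\theta_\alpha/p_{t_\alpha}=1/p_t$, and invoke multilinear H\"older. All of that is correct.

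One small slip in the final parenthetical: applying H\"older to $\prod_\alpha(g_\alpha^{p_t})^{\theta_\alpha}$ with conjugate exponents $\{\theta_\alpha^{-1}\}$ produces $\prod_\alpha\|g_\alpha\|_{L^{p_t}}^{\theta_\alpha}$, with every factor measured in the same space $L^{p_t}$ --- not $\prod_\alpha\|g_\alpha\|_{L^{p_{t_\alpha}}}^{\theta_\alpha}$ as needed. The correct conjugate exponents are $r_\alpha=p_{t_\alpha}/(\theta_\alpha p_t)$, which satisfy $\sum_\alpha 1/r_\alpha=p_t\sum_\alpha\theta_\alpha/p_{t_\alpha}=1$ by exactly the numerology you already verified, and then $\int\prod_\alpha g_\alpha^{p_t\theta_\alpha}\le\prod_\alpha\left(\int g_\alpha^{p_{t_\alpha}}\right)^{p_t\theta_\alpha/p_{t_\alpha}}$ gives the desired bound after taking $p_t$-th roots. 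Since you correctly stated the multilinear H\"older inequality and its hypothesis before that sentence, this is only a defect in the optional proof sketch, not in the reduction itself.
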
 

This Lemma is stated in \cite{CX} for the specific weights considered there, but the proof holds in the generality stated above. 

\begin{lem}\label{mlemult}  Let $t\in[0, 1 ]$.  Let $ A =\{ (i,j):  0\leq i \leq q_{el},0\leq j \leq p_{el} \}$.  Let $\{ f_{\alpha}: \alpha\in A\}$ satisfy $f_\alpha\in X_t$ for all $\alpha\in A$. Then $\vec{S}(\vec{f})\in X_t$ and 
\[ \|\vec{S}(\vec{f})\|_{X_t} \leq C \prod_{\alpha\in A} \|f_\alpha\|_{X_t}. \]
\end{lem}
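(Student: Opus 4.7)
The plan is to combine the pointwise multilinear bound from Lemma \ref{basicmle} with the weighted multilinear H\"older inequality of Lemma \ref{lemCX} and then close the loop using the monolinear boundedness of $\Sel$ from Proposition \ref{sE}. The index set $A$ has cardinality $p_{el} q_{el}$, so weights of the form $\theta_\alpha = 1/(p_{el} q_{el})$ will match the exponents appearing in Lemma \ref{basicmle} exactly.

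First I would invoke Lemma \ref{basicmle} to get the pointwise bound
\[ |\vec{S}(\vec{f})| \leq \prod_{\alpha \in A} \Sel(|f_\alpha|)^{1/(p_{el} q_{el})}. \]
This reduces the problem to controlling the $X_t$-norm of a weighted geometric mean of nonnegative functions. Next I would apply Lemma \ref{lemCX} with $\theta_\alpha = 1/(p_{el} q_{el})$ and $t_\alpha = t$ for every $\alpha \in A$. Both hypotheses of that lemma are trivially verified: $\sum_\alpha \theta_\alpha = |A|/(p_{el} q_{el}) = 1$, and since each $t_\alpha$ equals $t$, the identity $1-t = \sum_\alpha \theta_\alpha(1-t_\alpha)$ collapses to $1-t = 1-t$. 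This yields
\[ \|\vec{S}(\vec{f})\|_{X_t} \leq \prod_{\alpha \in A} \|\Sel(|f_\alpha|)\|_{X_t}^{1/(p_{el}q_{el})}. \]

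The final step is to apply Proposition \ref{sE} to each factor, giving $\|\Sel(|f_\alpha|)\|_{X_t} \leq C \|f_\alpha\|_{X_t}^{p_{el} q_{el}}$. Substituting and simplifying the exponents (the outer $1/(p_{el}q_{el})$ cancels the inner $p_{el}q_{el}$) produces the stated estimate $\|\vec{S}(\vec{f})\|_{X_t} \leq C \prod_{\alpha \in A} \|f_\alpha\|_{X_t}$.

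Honestly, I do not expect any real obstacle here: the statement is essentially a bookkeeping corollary of three results already in hand. The only thing to watch is that the numerology of $|A| = p_{el}q_{el}$ lines up with the exponent $1/(p_{el}q_{el})$ from Lemma \ref{basicmle} and with the exponent $p_{el}q_{el}$ in Proposition \ref{sE}, so that all three balance exactly and the hypotheses of Lemma \ref{lemCX} are satisfied with a uniform choice $t_\alpha \equiv t$.
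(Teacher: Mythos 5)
Your proposal is correct and matches the paper's own proof step for step: the same pointwise bound from Lemma \ref{basicmle}, the same application of Lemma \ref{lemCX} with $\theta_\alpha = 1/(p_{el}q_{el})$ and $t_\alpha \equiv t$, and the same closing appeal to Proposition \ref{sE}.
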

\begin{proof}
Estimating $\vec{S}$ in terms of $\Sel$ by Lemma \ref{basicmle},
\[ \| |\vec{S}(\vec{f})\|_{X_t}\leq \| \prod_{\alpha\in A} \left( \Sel (|f_\alpha|)\right)^{\frac{1}{p_{el}q_{el}}} \|_{X_t} .\] 
Applying Lemma \ref{lemCX} with $\theta_{\alpha}=\frac{1}{p_{el}q_{el}}$ and $t_\alpha=t$ for each $\alpha\in A$, 
\[  \| \prod_{\alpha\in A} \left( \Sel |f_\alpha|\right)^{\frac{1}{p_{el}q_{el}}} \|_{X_t}
\leq 
 \prod_{\alpha\in A}|| \Sel | f_\alpha| ||_{X_{t}}^{\frac{1}{p_{el}q_{el}}}. 
 \] 
The lemma then follows from the bound on $\Sel$ from Proposition \ref{sE}. 

 \end{proof} 

\begin{lem}\label{mleonebetter}  Let $t\in[0, \frac{1}{p_{el}q_{el}} ]$.  Let $ A =\{ (i,j):  0\leq i \leq q_{el},0\leq j \leq p_{el} \}$.  Let $\{ f_{\alpha}: \alpha\in A\}$ such that  $f_\alpha\in X_0$ for all $\alpha\in A$ and suppose further that there exists $\beta\in A$ such that $f_\beta\in X_{tp_{el}q_{el}}$. Then % $\vec{S}(\vec{f})\in X_t$ and 
\[ \|\vec{S}(\vec{f})\|_{X_t} \leq C \left(  \|f_\beta\|_{ X_{tp_{el}q_{el}}}\prod_{\alpha\neq\beta} \|f_\alpha\|_{X_0}\right). \]
\end{lem}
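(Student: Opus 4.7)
The plan is to combine the pointwise multilinear bound of Lemma \ref{basicmle} with the weighted multilinear H\"older inequality of Lemma \ref{lemCX}, where all the ``decay budget'' is concentrated on the single index $\beta$ and no weight is demanded from the other factors. This is exactly the kind of bound Lemma \ref{lemCX} is designed to produce, and the upper constraint $t\leq 1/(p_{el}q_{el})$ is precisely what allows $\beta$ to absorb the full weight.

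First, I would apply Lemma \ref{basicmle} to write
\[
|\vec{S}(\vec{f})| \;\leq\; \prod_{\alpha\in A}\Sel(|f_\alpha|)^{1/(p_{el}q_{el})},
\]
and then invoke Lemma \ref{lemCX} with $\theta_\alpha=1/(p_{el}q_{el})$ for every $\alpha\in A$ (these sum to $1$ since $|A|=p_{el}q_{el}$), choosing the interpolation parameters $t_\beta=tp_{el}q_{el}$ and $t_\alpha=0$ for $\alpha\neq\beta$. The hypothesis $t\in[0,1/(p_{el}q_{el})]$ guarantees $t_\beta\in[0,1]$, and the required identity
\[
1-t \;=\; \sum_{\alpha\in A}\theta_\alpha(1-t_\alpha) \;=\; \tfrac{1}{p_{el}q_{el}}\bigl[(1-tp_{el}q_{el}) + (p_{el}q_{el}-1)\bigr]
\]
holds automatically. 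This yields
\[
\|\vec{S}(\vec{f})\|_{X_t} \;\leq\; \|\Sel(|f_\beta|)\|_{X_{tp_{el}q_{el}}}^{1/(p_{el}q_{el})} \prod_{\alpha\neq\beta}\|\Sel(|f_\alpha|)\|_{X_0}^{1/(p_{el}q_{el})}.
\]

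Finally, I would apply Proposition \ref{sE} to each factor; since $\|\Sel g\|_{X_s}\leq C\|g\|_{X_s}^{p_{el}q_{el}}$, taking $(p_{el}q_{el})$-th roots produces $\|\Sel g\|_{X_s}^{1/(p_{el}q_{el})}\leq C\|g\|_{X_s}$, applied at $s=tp_{el}q_{el}$ for the $\beta$ factor and at $s=0$ for the remaining $p_{el}q_{el}-1$ factors. The resulting product is exactly the claimed bound.

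Structurally, this argument is little more than bookkeeping once one has Lemmas \ref{basicmle}, \ref{lemCX}, and Proposition \ref{sE} in hand. I do not foresee a genuine obstacle; the only non-routine choice is how to split the interpolation parameters $t_\alpha$, and the hypothesis $t\leq 1/(p_{el}q_{el})$ is dictated by the constraint that all the weight be placeable on a single factor $\beta$ while keeping $t_\beta\leq 1$.
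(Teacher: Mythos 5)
Your proof is correct and is essentially identical to the paper's own argument: apply Lemma \ref{basicmle}, then Lemma \ref{lemCX} with $\theta_\alpha=1/(p_{el}q_{el})$, $t_\beta=tp_{el}q_{el}$, and $t_\alpha=0$ otherwise, then finish with Proposition \ref{sE}. The only difference is that you spell out the verification of $1-t=\sum_\alpha\theta_\alpha(1-t_\alpha)$ and the $(p_{el}q_{el})$-th-root bookkeeping, both of which the paper leaves implicit.
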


\begin{proof}
Again, estimating $\vec{S}$ in terms of $\Sel$ by Lemma \ref{basicmle},
$$\| |\vec{S}(\vec{f})\|_{X_t}\leq \| \prod_{\alpha\in A} \left( \Sel (|f_\alpha|)\right)^{\frac{1}{p_{el}q_{el}}} \|_{X_t} .$$
Apply Lemma \ref{lemCX} with $\theta_{\alpha}=\frac{1}{p_{el}q_{el}},$ $t_\alpha=0$ for $\alpha\neq\beta$, and $t_\beta=p_{el}q_{el}t$ to obtain 
\[  \| \prod_{\alpha\in A} \left( \Sel (|f_\alpha|)\right)^{\frac{1}{p_{el}q_{el}}} \|_{X_t}
\leq 
 \prod_{\alpha\in A}|| \Sel ( |f_\alpha|) ||_{X_{t_\alpha}}^{\frac{1}{p_{el}q_{el}}}. 
 \] 
Using the bound on $\Sel$ from Proposition \ref{sE} and that  $t_\alpha=0$ for $\alpha\neq\beta$ and $t_\beta=p_{el}q_{el}t$ , 
\[  \prod_{\alpha\in A}|| \Sel( |f_\alpha|) ||_{X_{t_\alpha}}^{\frac{1}{p_{el}q_{el}}}
\leq C \prod_{\alpha\in A}|| f_\alpha ||_{X_{t_\alpha}}\leq C || f_\alpha ||_{X_{p_{el}q_{el}t}}\prod_{\alpha\neq\beta}|| f_\alpha ||_{X_{0}}. \qedhere
 \] 
\end{proof}

\section {Extra Decay}

In this section we show that any solution of the generalized Euler-Lagrange equation (\ref{EL}) while initially only assumed to be in $L^p$ in fact has better decay:

\begin{prop}\label{extraDecay} Let $q_0\in(1,n+1]$ such that for $p_0=\frac{ nq_0}{n-k+kq_0}$, $q_{el}=q_0-1$ and $p_{el}=\frac{1}{p_0-1}$ are both integers.   Let $n\geq 2$ and $\lambda\in\R$.    Let $f\in L^{p_0}(\R^n)$ be a real-valued solution of the generalized Euler-Lagrange equation $f=\lambda \Sel f.$ Then there exists $t>0$ such that $f\in X_t$. 
\end{prop}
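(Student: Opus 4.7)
The strategy is a bootstrap from $f\in X_0=L^{p_0}$ to $f\in X_t$ for some $t>0$, exploiting the multilinear form $\Sel f=\vec S(f,\ldots,f)$ of the Euler--Lagrange equation. Proposition~\ref{sE} by itself is circular: it yields $\|f\|_{X_t}\leq C\|f\|_{X_t}^{p_{el}q_{el}}$, which bounds $\|f\|_{X_t}$ from below when finite but does not prove finiteness. The useful tool is Lemma~\ref{mleonebetter}: if just one of the $p_{el}q_{el}$ input slots of $\vec S$ lies in $X_{tp_{el}q_{el}}$ while the others lie merely in $X_0$, the output lies in $X_t$. The task is therefore to manufacture extra decay at one slot.

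\textbf{Decomposition.} For large $R$ and $\Lambda$, write $f=g+h$ with $g=f\,\mathbf 1_{\{|x|\leq R\}\cap\{|f|\leq\Lambda\}}$, so that $g$ is bounded with compact support (hence $g\in X_s$ for every $s\in[0,1]$) and $\|h\|_{X_0}$ can be made arbitrarily small by choice of $R,\Lambda$. Since $\vec S$ is $p_{el}q_{el}$-linear in its inputs (the exponents $q_{el}$ and $p_{el}$ being positive integers and $\Tnk,\Tnk^*$ being linear), expand
\[
\Sel f=\sum_{\sigma\in\{g,h\}^{p_{el}q_{el}}}\vec S(\sigma_1,\ldots,\sigma_{p_{el}q_{el}}).
\]
Set $t=1/(p_{el}q_{el})$. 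For each summand containing at least one $g$-slot, Lemma~\ref{mleonebetter} applied with $\beta$ at that slot (so $\|g\|_{X_{tp_{el}q_{el}}}=\|g\|_{X_1}<\infty$) bounds that summand in $X_t$ by a constant multiple of $\|g\|_{X_1}\|f\|_{X_0}^{p_{el}q_{el}-1}$; the all-$g$ summand is handled by Proposition~\ref{sE}. All of these contributions are finite.

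\textbf{Main obstacle and its resolution.} The only troublesome summand is the pure residual $\vec S(h,\ldots,h)=\Sel h$: no slot has extra decay, Lemma~\ref{mleonebetter} does not apply, and Proposition~\ref{sE} delivers only an $X_0$-bound of size $C\|h\|_{X_0}^{p_{el}q_{el}}$. I would resolve this by iterating the decomposition inside $\Sel h$: split $h=g_2+h_2$ with yet smaller $\|h_2\|_{X_0}$, expand $\Sel h$ multilinearly, and continue. After $N$ such iterations we obtain $\Sel f=A_N+\Sel h_N$, where $A_N$ is bounded in $X_t$ (by summing the contributions at each level via Lemma~\ref{mleonebetter}) and $\|\Sel h_N\|_{X_0}$ is arbitrarily small. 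The main technical subtlety is choosing the truncation parameters $(R_k,\Lambda_k)$ adaptively so that the cumulative $X_t$-bound on $A_N$ remains uniformly finite --- the $X_1$-norm of $g_k$ must be balanced against the geometric shrinkage of $\|h_{k-1}\|_{X_0}^{p_{el}q_{el}-1}$. To close the argument without circular use of the a priori unknown $\|f\|_{X_t}$, one should first run the estimate on truncated approximations $f_M=f\,\mathbf 1_{\{|x|\leq M\}\cap\{|f|\leq M\}}$ of $f$, derive bounds uniform in $M$ using the Euler--Lagrange equation together with the bounded-operator estimates of Lemma~\ref{bddops}, and recover $f\in X_t$ by Fatou's lemma in the limit $M\to\infty$.
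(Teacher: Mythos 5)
You have the right decomposition (a piece $g$ that is bounded with bounded support and hence in $X_1$, plus a residual $h$ small in $X_0$) and you correctly identify Lemma~\ref{mleonebetter} as the key multilinear estimate. But your plan does not close, and the paper's proof closes it in a way you have not recovered. The paper reformulates the problem as a fixed-point equation: setting $\mathcal{L}(\varphi_\epsilon,g_\epsilon)=g_\epsilon-\lambda\Sel(g_\epsilon)$ and $A_\epsilon(h)=\lambda\Sel(h)+\mathcal{L}(\varphi_\epsilon,g_\epsilon)$, the Euler--Lagrange equation shows that the residual $g_\epsilon$ is a fixed point of $A_\epsilon$ in $X_0$. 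Lemma~\ref{fixedpoint} then proves that for small $\epsilon$ and small $t$ the map $A_\epsilon$ is a strict contraction on the ball $B_t(0,\epsilon^{1/2})$ in $X_t$ (this is exactly where Lemma~\ref{mleonebetter} and Lemma~\ref{mlemult} are used), so it has a unique fixed point there; since $X_t\subset X_0$ with nested balls, uniqueness forces that fixed point to coincide with $g_\epsilon$, giving $g_\epsilon\in X_t$ and hence $f\in X_t$.

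The gap in your version is the handling of the pure-residual term $\Sel(h)$, for which you propose iterating the decomposition. This is, in effect, trying to run a Picard iteration by hand, but you have no mechanism to control the cumulative sum. At step $k$ you need $\|h_k\|_{X_0}$ small, which can require $R_k$ arbitrarily large (the tail of $h_{k-1}$ may live at arbitrary distance from the origin), and then $\|g_k\|_{X_1}$ is inflated by $\langle R_k\rangle^{p_{el}(n-k)}$ with no a priori bound; there is no ``balance'' to strike because $\|g_k\|_{X_1}$ is not controlled by $\|h_{k-1}\|_{X_0}$. Your fallback --- estimating truncations $f_M$ uniformly and passing to the limit by Fatou --- also fails because $f_M$ does not satisfy the Euler--Lagrange equation: $\Sel f_M\neq\lambda^{-1}f_M$, so the manipulations that would give a self-improving bound on $\|f_M\|_{X_t}$ have nothing to hook onto. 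The Banach contraction principle is not cosmetic here; it is precisely what replaces your unmanageable infinite iteration with a single, convergent one and what identifies the $X_t$ fixed point with the given residual $g_\epsilon$.
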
 
The proof given here is essentially the same as that  of Proposition 5.1 in \cite{CX}.\\
If $f$ is a solution of the generalized Euler-Lagrange equation, and $f=g+\varphi$ is some decomposition of $f$, then $g$ solves the generalized Euler-Lagrange equation upto an error given by 
\begin{align*} 
 \mathcal{L}(\varphi, g) &= g - \lambda S(g) \\
 &= \lambda \Sel(\varphi + g) - \lambda \Sel(g) - \varphi. 
\end{align*} 

This operator is well-defined for $\varphi,g\in X_0$. We require further than $\varphi\in L^\infty(\R^n)$.

 For each $\epsilon>0$, chose a decomposition of $f$, $f=\varphi_\epsilon+g_\epsilon$, such that $\|g_\epsilon\|_{X_0}<\epsilon$, $\varphi_\epsilon$ has bounded support, and $\varphi_\epsilon\in L^\infty$. Define 
\[ A_\epsilon(h) =\lambda \Sel (h )  + \mathcal{L}(\varphi_\epsilon, g_\epsilon). \] 
 As $f$ is a solution of the generalized Euler-Lagrange equation, $g_\epsilon$ is a solution of $A_\epsilon(h)=h$ in the space $X_0$. Proposition \ref{extraDecay} follows from showing that $g_\epsilon$ in fact has better decay. The main step is the following lemma.

\begin{lem} \label{fixedpoint} 
 Let $q_0\in(1,n+1]$ such that for $p_0=\frac{ nq_0}{n-k+kq_0}$, $q_{el}=q_0-1$ and $p_{el}=\frac{1}{p_0-1}$ are both integers.   Let $n\geq 2$ and $\lambda\in\R$.    Let $f\in L^{p_0}(\R^n)$ be a real-valued solution of  $f=\lambda \Sel(f)$.  For each $\epsilon>0$, let  $f=\varphi_\epsilon+g_\epsilon$ be any decomposition such that $\|g_\epsilon\|_{X_0}<\epsilon$ and $\varphi_\epsilon\in L^\infty$ has bounded support. Then there exists $\epsilon_0>0$ such that for each $\epsilon\in(0,\epsilon_0]$, there exists $t_\epsilon>0$ such that for all $t\in[0,t_\epsilon]$, $A_\epsilon:B_t(0,\epsilon^{1/2})\to B_t(0,\epsilon^{1/2})$ is a strict contraction and the fixed point equation $A_\epsilon(h)=h$ has a unique solution $h\in X_t$ satisfying $\|h\|_{X_t}\leq \epsilon^{1/2} $.
\end{lem}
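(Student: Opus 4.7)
The plan is to apply Banach's fixed point theorem to $A_\epsilon$ on the closed ball $\overline{B_t(0,\epsilon^{1/2})}$ of $X_t$. The two ingredients to verify are that $A_\epsilon$ maps this ball into itself and is a strict contraction on it; both are direct consequences of the multilinear bounds in Lemmas \ref{mlemult} and \ref{mleonebetter} combined with the interpolation in Lemma \ref{littlethings}(2).

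For the self-mapping, I would split
\[\|A_\epsilon(h)\|_{X_t} \leq |\lambda|\,\|\Sel(h)\|_{X_t} + \|\mathcal{L}(\varphi_\epsilon,g_\epsilon)\|_{X_t}.\]
Lemma \ref{mlemult} gives $\|\Sel(h)\|_{X_t} \leq C\|h\|_{X_t}^{p_{el}q_{el}} \leq C\epsilon^{p_{el}q_{el}/2}$. A direct computation shows $p_{el}q_{el}=(n-k+kq_0)/(n-k)>1$, and since $p_{el}q_{el}$ is a positive integer this forces $p_{el}q_{el}\geq 2$, so this term is $o(\epsilon^{1/2})$. The real work is bounding $\mathcal{L}$ in $X_t$, for which I exploit two different representations. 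The first, $\mathcal{L}(\varphi_\epsilon,g_\epsilon) = g_\epsilon - \lambda\Sel(g_\epsilon)$, combined with Proposition \ref{sE}, gives $\|\mathcal{L}(\varphi_\epsilon,g_\epsilon)\|_{X_0} \leq \|g_\epsilon\|_{X_0} + C|\lambda|\|g_\epsilon\|_{X_0}^{p_{el}q_{el}} \leq 2\epsilon$ for $\epsilon$ small. The second uses multilinearity of $\vec S$ to expand $\Sel(\varphi_\epsilon+g_\epsilon)-\Sel(g_\epsilon)$:
\[\mathcal{L}(\varphi_\epsilon,g_\epsilon) \;=\; \lambda\sum_{\varnothing\neq S\subseteq [1,p_{el}]\times[1,q_{el}]}\vec S(\vec{h}_S)\; -\; \varphi_\epsilon,\]
where $\vec{h}_S$ places $\varphi_\epsilon$ in the slots indexed by $S$ and $g_\epsilon$ in the rest. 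For each $S$ I pick $\beta\in S$ and apply Lemma \ref{mleonebetter} at $t_0:=1/(p_{el}q_{el})$; since $\varphi_\epsilon\in L^\infty$ with bounded support implies $\|\varphi_\epsilon\|_{X_1}<\infty$, and $\|\varphi_\epsilon\|_{X_0},\|g_\epsilon\|_{X_0} \leq \|f\|_{X_0}+\epsilon$, I obtain a finite (but $\epsilon$-dependent) bound $\|\mathcal{L}(\varphi_\epsilon,g_\epsilon)\|_{X_{t_0}} \leq M_\epsilon$. Interpolating via Lemma \ref{littlethings}(2) gives, for every $s\in[0,t_0]$,
\[\|\mathcal{L}(\varphi_\epsilon,g_\epsilon)\|_{X_s} \leq (2\epsilon)^{1-s/t_0}\,M_\epsilon^{s/t_0}.\]
For any fixed $\epsilon\leq \epsilon_0$ small enough that $2\epsilon < \epsilon^{1/2}/4$, the right-hand side tends to $2\epsilon$ as $s\to 0$ and is therefore $\leq \epsilon^{1/2}/2$ once $s$ is taken sufficiently small. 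This defines $t_\epsilon$, and together with the $o(\epsilon^{1/2})$ bound on $\Sel(h)$ yields $\|A_\epsilon(h)\|_{X_t} \leq \epsilon^{1/2}$ for $t\in[0,t_\epsilon]$.

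For the contraction property, I would telescope with $N=p_{el}q_{el}$,
\[\Sel(h_1)-\Sel(h_2) = \sum_{m=1}^N \vec S(\underbrace{h_1,\ldots,h_1}_{m-1},\,h_1-h_2,\,\underbrace{h_2,\ldots,h_2}_{N-m}),\]
and apply Lemma \ref{mlemult} to each term to obtain $\|A_\epsilon(h_1)-A_\epsilon(h_2)\|_{X_t} \leq C|\lambda|N\epsilon^{(N-1)/2}\|h_1-h_2\|_{X_t}$, a strict contraction for $\epsilon$ small enough (shrinking $\epsilon_0$ if necessary). Banach's fixed point theorem then delivers the unique fixed point in $\overline{B_t(0,\epsilon^{1/2})}$.

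The main obstacle is the $X_t$ bound on $\mathcal{L}(\varphi_\epsilon,g_\epsilon)$: a direct attack using $\mathcal{L}(\varphi,g)=\lambda(\Sel(\varphi+g)-\Sel(g))-\varphi$ alone produces a term $\|\varphi_\epsilon\|_{X_t}$ that need not shrink with $\epsilon$. The resolution is to combine that representation (which gives a finite though possibly large bound at level $t_0$) with the alternate form $g_\epsilon-\lambda\Sel(g_\epsilon)$, in which the $\varphi_\epsilon$ contribution has cancelled and only $g_\epsilon$ remains, yielding smallness in $X_0$; interpolating between these two transfers the smallness to a small positive $t$. This is exactly why $t_\epsilon$ must be allowed to depend on $\epsilon$ (through $M_\epsilon$), which is harmless for the eventual application in Proposition \ref{extraDecay}.
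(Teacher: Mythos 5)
Your proposal is correct and follows essentially the same route as the paper's own proof: you use the two representations of $\mathcal{L}(\varphi_\epsilon,g_\epsilon)$ — the cancelled form $g_\epsilon-\lambda\Sel(g_\epsilon)$ for $X_0$-smallness and the multilinear expansion with at least one $\varphi_\epsilon$ slot, bounded via Lemma \ref{mleonebetter}, for $X_{1/(p_{el}q_{el})}$-finiteness — then interpolate via the log-convexity in Lemma \ref{littlethings} to push smallness into $X_t$ for small $t>0$, exactly as the paper does. The contraction estimate by telescoping $\Sel(h)-\Sel(\tilde h)$ into $p_{el}q_{el}$ terms and applying Lemma \ref{mlemult} is also the paper's argument; your explicit remark that $p_{el}q_{el}=(n-k+kq_0)/(n-k)>1$ forces $p_{el}q_{el}\geq 2$ makes precise a fact the paper uses only implicitly when it asserts $C\epsilon^{p_{el}q_{el}/2}+\epsilon^{3/4}<\epsilon^{1/2}$ for small $\epsilon$.
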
 

We first give the deduction of Proposition \ref{extraDecay} from Lemma \ref{fixedpoint}. The key idea is that while the solution $h\in X_t$ guaranteed by Lemma \ref{fixedpoint} might in principle depend on $t$, because the spaces are nested, it does not. 

\begin{proof}[Proof of Proposition \ref{extraDecay}] 
Let $\epsilon_0$ be the small quantity guaranteed by Lemma \ref{fixedpoint}. Fix $\epsilon\in(0,\epsilon_0]$ and let $0\leq s\leq t\leq t_\epsilon$.  Let $h\in X_s$, $\|h\|_{X_s} \leq \epsilon^{1/2}$ and $A_\epsilon(h)=h$.  Similarly, let $\tilde{h}\in X_t$, $\|\tilde{h}\|_{X_t} \leq \epsilon^{1/2}$ and $A_\epsilon(\tilde{h})=\tilde{h}$.  As $\tilde{h}\in X_t$ and $s\leq t$, by Lemma \ref{littlethings} $\tilde{h}\in X_s$ and 
\[ \| \tilde{h} \|_{X_s} \leq  \|\tilde{h}\|_{X_t} \leq \epsilon^{1/2} . \]
Therefore, by uniqueness of solutions in $X_s$,  $h=\tilde{h}$. 

Taking $s=0$, by definition $h=g_\epsilon$ is a solution of $A_\epsilon(h)=h$, and this uniqueness implies that $g_\epsilon\in X_t$ for all $t\in[0,t_\epsilon]$ for all sufficiently small $\epsilon$. Thus, $f=g_\epsilon +\varphi_\epsilon$, also satisfies $f\in X_t$. 
\end{proof} 

\begin{proof}[Proof of Lemma \ref{fixedpoint}]   
We begin by estimating $\|\mathcal{L}(\varphi_\epsilon, g_\epsilon)\|_{X_t}$ for small $t$.  
First, as  $g_\epsilon+\varphi_\epsilon$ is a solution to the generalized Euler-Lagrange equation  \eqref{EL}, there exists $\lambda\in\C$ such that $ g_\epsilon+\varphi _\epsilon= \lambda \Sel(g_\epsilon+\varphi_\epsilon) $. 
Therefore, 
\[  \mathcal{L}(\varphi_\epsilon, g_\epsilon) = g_\epsilon-\lambda \Sel g_\epsilon. \]
By the bound on $\Sel$ from Proposition  \ref{sE} and the triangle inequality for $\epsilon\leq1$,  
\[ \|\mathcal{L}(\varphi_\epsilon, g_\epsilon)\|_{X_0} \leq \| g_\epsilon \|_{X_0}+C \| g_\epsilon\|_{X_0}^{p_{el}q_{el}} \leq C\epsilon . \] 
% \epsilon + C_s \epsilon^{p_{el}q_{el}}
Next consider $t =\frac{1}{p_{el}q_{el}}$ the largest value of $t$ for which the multilinear estimate Lemma \ref{mleonebetter} applies. Working directly from the definition of $\mathcal{L}$, 
\[ \|\mathcal{L}(\varphi_\epsilon, g_\epsilon) \|_{X_{1/p_{el}q_{el}}} \leq |\lambda| \|  \Sel(\varphi_\epsilon + g_\epsilon) - \Sel(g_\epsilon))  \|_{X_{1/p_{el}q_{el}}}+ \| \varphi_\epsilon  \|_{X_{1/p_{el}q_{el}}}. \] 
Expand $\Sel(\varphi_\epsilon + g) - \Sel(g_\epsilon)$  as a sum of $p_{el}q_{el}-1$ terms each of the general form $\vec{S}(\vec{f}_i)$ where  $\vec{f_i}=(f_{i,\alpha}:  \alpha\in A)$ and  $f_{i,\alpha}\in\{\varphi_\epsilon,g_\epsilon\}$ and for each term there is at least one index $\beta$ such that $f_{i,\beta}=\varphi_\epsilon$.   Applying Lemma \ref{mleonebetter} to each such term gives, 
\[ \|\mathcal{L}(\varphi_\epsilon, g_\epsilon) \|_{X_\frac{1}{p_{el}q_{el}}} \leq C \sum_{i=1}^{p_{el}q_{el}} \left( \|\varphi_\epsilon\|_{X_1}\prod_{\alpha\neq\beta} \|f_{i,\alpha}\|_{X_0} \right) 
+ \| \varphi_\epsilon  \|_{X_\frac{1}{p_{el}q_{el}}}. \] 
	As $ \|g_\epsilon\|_{X_0} <\epsilon$, and for each $s\in[0,1]$,  $ \|\varphi_\epsilon\|_{X_s} <\|\varphi_\epsilon\|_{X_1}$, there exists a finite constant depending on $\varphi_\epsilon$ such that 
\[ \|\mathcal{L}(\varphi_\epsilon, g_\epsilon) \|_{X_{1/p_{el}q_{el}}} \leq C_{\varphi_\epsilon} . \] 
By Lemma \ref{littlethings}, in particular, by convexity of the $X_t$ norms, for sufficiently small $\epsilon>0$, there exists $t_\epsilon>0$, such that for each $t\in(0,t_\epsilon]$, 
\[ \|\mathcal{L}(\varphi_\epsilon, g_\epsilon) \|_{X_{t}}\leq \epsilon^{3/4}. \] 

Consider now bounds for $A_\epsilon$.   By the triangle inequality, 
\[ \| A_\epsilon(h) \|_{X_{t}} \leq |\lambda|\|\Sel h\|_{X_{t}}+  \|\mathcal{L}(\varphi_\epsilon, g_\epsilon) \|_{X_{t}} . \] 
Using the bound on $\Sel$ from Proposition  \ref{sE}, 
\[ \| A_\epsilon(h) \|_{X_{t}} \leq C \| h\|_{X_{t}}^{p_{el}q_{el}} +  \|\mathcal{L}(\varphi_\epsilon, g_\epsilon) \|_{X_{t}} . \] 
Let $B_t(0,\epsilon^{1/2})$ be the open ball of radius $\epsilon^{1/2}$ centered at $0$ in $X_{t}$. If $t\in(0,t_\epsilon]$, for $h\in B_t(0,\epsilon^{1/2})$,
\[ 
 \| A_\epsilon(h) \|_{X_{t}}  \leq  C\epsilon^{p_{el}q_{el}/2} + \epsilon^{3/4} . 
\]
For sufficiently small $\epsilon$ it follows that  $ \| A_\epsilon(h) \|_{X_{t}} < \epsilon^{1/2}$. Thus for sufficiently small $\epsilon$, for every $t\in(0,t_\epsilon]$, 
%  there exists $\epsilon_0$ such that for all $\epsilon<\epsilon_0$, for every $t<t_\epsilon$, 
\[  A_\epsilon( B_t(0,\epsilon^{1/2}))\subset B_t(0,\epsilon^{1/2}). \] 
Consider $\tilde{h}, h\in  B_t(0,\epsilon^{1/2})$. 
\[ \| A_\epsilon(h) -A_\epsilon(\tilde{h})  \|_{X_t} = C \| \Sel h - \Sel \tilde{h} \|_{X_t} . \]  % C= |\lambda| 
Write out $\Sel (h)$ and $\Sel(\tilde{h})$ in terms of the multilinear operator $\vec{S}$. Adding and subtracting terms of the form $\vec{\Sel}(h,\ldots,h,\tilde{h}, \tilde{h},\ldots,\tilde{h})$, allows one to write $\Sel (h) - \Sel( \tilde{h})$ 
as a sum of $p_{el}q_{el}$ terms, where each term is of the general form $\vec{S}(\vec{f})$ with $\vec{f}=(f_\alpha: \alpha\in A)$  such that there is one index $\beta$ such that $f_\beta=h-\tilde{h}$, and for  $\alpha\neq\beta$, $f_\alpha$ is either $h$ or $\tilde{h}$.    Applying the multilinear estimate from Lemma \ref{mlemult} to each such term gives that there exists $C$ independent of $\epsilon$ such that 
\[ \| A_\epsilon(h) -A_\epsilon(\tilde{h})  \|_{X_t} \leq C \epsilon^{(p_{el}q_{el}-1)/2}  \| h - \tilde{h} \|_{X_t} .\]  % C= |\lambda|p_{el}q_{el}C from mlemult.   
Thus when $\epsilon$ is sufficiently small $A_\epsilon:B_t(0,\epsilon^{1/2})\to B_t(0,\epsilon^{1/2})$ is a strict contraction.  Therefore, there exists a unique $h_\epsilon\in X_t $ such that $\|h_\epsilon\|_{X_t}\leq \epsilon^{1/2}$ and $A_\epsilon(h_\epsilon)=h_\epsilon$.

\end{proof}

\section {Mollified derivatives} 

The next goal is to show that if a solution of the Euler-Lagrange equation has additional decay, then its derivatives exist and behave well. Following Christ and Xue \cite{CX}, we use mollified derivatives to prove the smoothness result. In this section we state the definition of these derivatives and give their key properties, which we'll use in Section \ref{sec:conclusion} to complete the proof of Theorem \ref{main}.  We correct a small technical error from \cite{CX} in the definition of these mollified derivatives.

 Recall that $\langle x\rangle = (1+|x|^2)^{1/2}$ and $\mathscr{S}(\R^n)$ denotes the Schwartz class of functions on $\R^n$. Let $\mathscr{S}(\M)$ denote the class of functions on $\M$, satisfying ``$f(\theta,y)\in\mathscr{S}(\theta^\perp)\sim\mathscr{S}(\R^{n-k})$, uniformly in $\theta$".  We call this class the Schwartz class of functions on $\M$.

\begin{defn}\label{derivdef}
For all $f\in\mathscr{S}(\R^n)$ , for each $s\geq0$ and $\Lambda\geq1$ define the operator $\derivslx{s}{\Lambda}$ by 
\[ \widehat{\derivslx{s}{\Lambda}f}(\xi)=\frac{\langle \xi \rangle ^s}{\langle\Lambda^{-1}\xi \rangle ^s} \hat{f}(\xi).  \]
Similarly, for  all   $g(\theta,y)\in \mathscr{S}(\M)$, for each $s\geq0$ and $\Lambda\geq1$ define $\derivsly{s}{\Lambda} $ by 
\[  \widehat{\derivsly{s}{\Lambda}g_\theta}(\eta) =\frac{\langle \eta \rangle ^s}{\langle\Lambda^{-1}\eta \rangle ^s} \widehat{g_\theta}(\eta).   \] 
where the Fourier transform is taken only in the $y$ variable.
\end{defn} 

The first observation is that for fixed $\Lambda$ the mollified derivative retains some decay. 

\begin{lem}\label{niceMD}  For all sufficiently small $\rho\geq 0$, for each $\Lambda>0$, there exists a constant $C_{\Lambda}$, such that for all $f\in X_\rho$, 
\[ \|\derivslx{s}{\Lambda}f \|_{X_\rho} \leq C_\Lambda \|f\|_{X_\rho}. \] 
\end{lem}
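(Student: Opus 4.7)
The plan is to recognize $\derivslx{s}{\Lambda}$ as a Fourier multiplier with smooth bounded symbol and reduce the lemma to the classical boundedness of Mikhlin-type multipliers on weighted $L^p$ spaces with $A_p$ weights. The symbol is
\[ m_{s,\Lambda}(\xi) = \left( \frac{1+|\xi|^2}{1+\Lambda^{-2}|\xi|^2} \right)^{s/2}, \]
a smooth radial function on $\R^n$. For $\Lambda \geq 1$ and $s \geq 0$ it is bounded pointwise by $\Lambda^s$, equals $1$ at the origin, and tends to $\Lambda^s$ as $|\xi| \to \infty$ with error of order $|\xi|^{-2}$.

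The first step is to verify the Mikhlin condition
\[ |\xi|^{|\alpha|}\,|\partial_\xi^\alpha m_{s,\Lambda}(\xi)| \leq C(s,\Lambda,\alpha) \quad \text{for every multi-index } \alpha. \]
This is a routine calculus computation. Writing $\log m_{s,\Lambda}(\xi) = \tfrac{s}{2}\bigl[\log(1+|\xi|^2)-\log(1+\Lambda^{-2}|\xi|^2)\bigr]$ and differentiating shows that the logarithmic derivative of $m_{s,\Lambda}$ is a rational function which decays like $|\xi|^{-3}$ at infinity; inductively, each further $\xi$-derivative gains an additional factor of $|\xi|^{-1}$ in its decay. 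The constants so obtained depend on $s$ and $\Lambda$, but uniformity in $\Lambda$ is not required by the statement.

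The second step is to invoke the classical result (see for instance \cite{Stein}) that any Fourier multiplier satisfying the Mikhlin condition is a Calder\'on--Zygmund operator, and hence bounded on $L^p(v)$ for every $1<p<\infty$ and every $v\in A_p(\R^n)$. To apply this on $X_\rho=L^{p_\rho}(w^{\rho p_\rho})$, I appeal to Lemma \ref{ApWeights}, which guarantees that for all sufficiently small $\rho\geq 0$ the weight $w^{\rho p_\rho}$ lies in $A_{p_\rho}(\R^n)$; since $p_\rho=p_0/(1-\rho)\in(1,\infty)$, the weighted Mikhlin theorem yields
\[ \|\derivslx{s}{\Lambda}f\|_{X_\rho}\leq C_\Lambda\,\|f\|_{X_\rho} \]
for all $f\in\mathscr{S}(\R^n)$. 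Density of $\mathscr{S}(\R^n)$ in $X_\rho$ then extends $\derivslx{s}{\Lambda}$ to a bounded operator on the whole space, completing the proof. No step presents a serious obstacle: the only nontrivial work is the derivative bookkeeping in Step 1, and the range of allowed $\rho$ is precisely dictated by Lemma \ref{ApWeights}.
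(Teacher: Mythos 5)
Your proposal is correct and takes essentially the same approach as the paper: both recognize $\derivslx{s}{\Lambda}$ as a Fourier multiplier, verify the Mikhlin-type derivative bounds on the symbol, invoke weighted Calder\'on--Zygmund theory via the $A_{p_\rho}$ condition from Lemma~\ref{ApWeights}, and note that the constant is allowed to depend on $\Lambda$ (and $s$). The only cosmetic difference is your bookkeeping via the logarithmic derivative versus the paper's direct computation of $\partial_\xi^\alpha m_{s,\Lambda}$.
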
 

\begin{proof} 
For sufficiently small $\rho$, $w^{\rho p}\in A_p$ by Lemma \ref{ApWeights}.  %Thus it suffices to show that our Fourier multiplier operator corresponds to a Calder\'on-Zygmund kernel (\cite{Stein}, pg. 205).
By the H\"ormander-Mihlin multiplier theorem (see e.g \cite{Stein} pg 26, 205) it is enough to check that for each multi-index $\alpha$ such that $|\alpha|\leq\frac{n}{2}+1$,  $ \left| \partial^\alpha_\xi \left(\frac{\langle \xi\rangle^s}{\langle\Lambda^{-1} \xi\rangle^s} \right) \right| \leq \frac{C_{\alpha}}{|\xi|^{|\alpha|}}$ .  
Direct computation shows that for $\alpha$ satisfying $|\alpha|\leq\frac{n}{2}+1$,  
 \begin{equation}\label{Lambdas}  \left| \partial^\alpha_\xi \left(\frac{\langle \xi\rangle^s}{\langle\Lambda^{-1} \xi\rangle^s} \right) \right| = \Lambda^{s} \left| \partial^\alpha_\xi \left(\frac{1+|\xi|^2}{\Lambda^2+|\xi|^2} \right)^{s/2} \right|
\leq \frac{C_{s,\alpha}\Lambda^{s+2}}{(\Lambda^2+|\xi|^2)^{\frac{|\alpha|+2}{2}}}
\leq \frac{C_{s,\alpha}\Lambda^{s}}{|\xi|^{|\alpha|}}.
\end{equation} 
We note that the final constant $C_\Lambda$ is allowed to depend on $s.$

\end{proof}

 Moreover, these operators intertwine through the $k$-plane and the dual $k$-plane transforms.

\begin{lem}\label{commute}  For all $f\in\mathscr{S}(\R^n)$ and $g\in\mathscr{S}(\M)$ the following formulas hold:
% \[  \derivsy{s} \Tnk f = \Tnk \derivsx{s} f \] 
% \[ \derivsx{s} \Tnk^* g = \Tnk^* \derivsy{s} g\]  
\[  \derivsly{s}{\Lambda} \Tnk f = \Tnk \derivslx{s}{\Lambda} f \] 
\begin{equation}\label{Tsdls}  \derivslx{s}{\Lambda} \Tnk^* g = \Tnk^* \derivsly{s}{\Lambda} g.
\end{equation} 

\end{lem}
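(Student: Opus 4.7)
The plan is to reduce both identities to the Fourier slice theorem (or projection--slice theorem) for the $k$-plane transform. For any $f\in\mathscr{S}(\R^n)$ and $\eta\in \theta^\perp$ (identified via the natural inclusion $\theta^\perp\hookrightarrow\R^n$), Fubini together with the fact that $x\cdot\eta=0$ whenever $x\in\theta$ gives
\[ \widehat{\Tnk f(\theta,\cdot)}(\eta) \;=\; \hat f(\eta), \]
where the left-hand Fourier transform is in the $y$ variable only and the right-hand one is the full Fourier transform on $\R^n$.

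For the first identity, I would compute the partial (in $y$) Fourier transform of both sides. Applying the definition of $\derivsly{s}{\Lambda}$ and then the slice theorem,
\[ \widehat{\derivsly{s}{\Lambda}\Tnk f(\theta,\cdot)}(\eta) \;=\; \frac{\langle\eta\rangle^s}{\langle\Lambda^{-1}\eta\rangle^s}\,\widehat{\Tnk f(\theta,\cdot)}(\eta) \;=\; \frac{\langle\eta\rangle^s}{\langle\Lambda^{-1}\eta\rangle^s}\,\hat f(\eta), \]
while applying the slice theorem first and then the definition of $\derivslx{s}{\Lambda}$,
\[ \widehat{\Tnk \derivslx{s}{\Lambda} f(\theta,\cdot)}(\eta) \;=\; \widehat{\derivslx{s}{\Lambda} f}(\eta) \;=\; \frac{\langle\eta\rangle^s}{\langle\Lambda^{-1}\eta\rangle^s}\,\hat f(\eta). \]
The crucial observation is that the symbol $\langle\xi\rangle^s/\langle\Lambda^{-1}\xi\rangle^s$ depends only on the Euclidean norm of $\xi$, and the norm of $\eta\in\theta^\perp$ agrees whether computed in the subspace $\theta^\perp$ or in the ambient $\R^n$. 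Since the two partial Fourier transforms agree for every $\theta$, Fourier inversion yields the first identity.

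For the second identity a direct Fourier computation is awkward because $\Tnk^*$ averages rather than integrates along planes. Instead I would argue by duality. The symbols of $\derivslx{s}{\Lambda}$ and $\derivsly{s}{\Lambda}$ are real-valued, so by Parseval both operators are formally self-adjoint. Pairing against an arbitrary $f\in\mathscr{S}(\R^n)$ and using, in order, self-adjointness of $\derivslx{s}{\Lambda}$, the duality of $\Tnk$ and $\Tnk^*$, the identity \eqref{Tsdls} just proved, and self-adjointness of $\derivsly{s}{\Lambda}$, we obtain
\[ \int_{\R^n}(\derivslx{s}{\Lambda}\Tnk^* g)\,\bar f\,dx \;=\; \int_{\R^n}\Tnk^* g\,\overline{\derivslx{s}{\Lambda} f}\,dx \;=\; \int_{\M} g\,\overline{\Tnk\derivslx{s}{\Lambda} f} \;=\; \int_{\M} g\,\overline{\derivsly{s}{\Lambda}\Tnk f} \;=\; \int_{\R^n}(\Tnk^*\derivsly{s}{\Lambda} g)\,\bar f\,dx. \]
Since $f$ was arbitrary, the second identity follows.

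The main point to watch is simply that the radial symbol of $\derivslx{s}{\Lambda}$ restricts to $\theta^\perp$ so as to coincide with the symbol of $\derivsly{s}{\Lambda}$, and that all integrals in the duality chain converge; the latter is automatic because the symbol is smooth and bounded by a power of $\Lambda$ (cf.\ the estimates in the proof of Lemma \ref{niceMD}), so Schwartz decay is preserved throughout. There is no genuine analytic obstacle beyond carefully bookkeeping which variable the Fourier transform acts on.
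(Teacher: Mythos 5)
Your proof of the first identity is exactly the route the paper takes (and leaves implicit): take the partial Fourier transform in $y$, apply the Fourier slice theorem $\widehat{\Tnk f(\theta,\cdot)}(\eta)=c_{n,k}\hat f(\eta)$, and observe that the radial symbol of $\derivslx{s}{\Lambda}$ restricts on $\theta^\perp$ to the symbol of $\derivsly{s}{\Lambda}$. (You dropped the normalizing constant $c_{n,k}$, but it appears on both sides, so nothing is lost.)

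For the second identity, however, you and the paper diverge. The paper proves a Fourier-inversion formula for the adjoint transform (Lemma~\ref{DualF}), $\widehat{\Tnk^* g}(\xi)=\mathscr{C}_{n,k}\int_{\{\theta:\theta\perp\xi\}}\widehat g(\theta,\xi)\,d\gamma_{\xi^\perp}(\theta)$, and then pushes the multiplier through that integral directly. You instead deduce \eqref{Tsdls} from the first identity by duality, using that both $\derivslx{s}{\Lambda}$ and $\derivsly{s}{\Lambda}$ have real symbols and so are formally self-adjoint, together with the pairing $\int_{\M}\Tnk F\cdot\bar g=\int_{\R^n}F\cdot\overline{\Tnk^*g}$. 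This is a legitimate and slightly slicker route: it avoids proving Lemma~\ref{DualF} altogether (and with it the slightly delicate appearance of the distribution $\delta_\theta(\xi_2)$ in its proof), at the mild cost of having to check that every integral in the duality chain is absolutely convergent. Your justification of that convergence is correct in spirit --- each pairing has a Schwartz function on one side and a bounded function (or tempered distribution acted on by a polynomially bounded multiplier) on the other. One small slip in your write-up: in the sentence listing the steps you cite ``the identity \eqref{Tsdls} just proved,'' but \eqref{Tsdls} is what you are proving; the step in your displayed chain actually invokes the \emph{first} identity, $\Tnk\derivslx{s}{\Lambda}f=\derivsly{s}{\Lambda}\Tnk f$. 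The computation itself is correct; only the reference label is wrong.
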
 

The key observation in the proof is that $\Tnk$ and $\Tnk^*$ interact nicely with the Fourier transform.  It is well known (e.g. \cite{SmS75}) that for all  $f\in\mathscr{S}(\R^n)$, for all  $\theta\in\mathcal{G}_{n,k}$, for all $\xi\in\theta^\perp$
\[ \widehat{ \Tnk f _\theta}(\xi) = c_{n,k}\hat{f}(\xi) \]
where $c_{n,k}$ is a constant that depends on the normalization chosen for the integrals appearing in the definitions of the operators. 
 
 A direct computation shows that a similar formula holds for the adjoint as well. 
\begin{lem}\label{DualF}
 For all $g\in\mathscr{S}(\M)$, for all  $\theta\in\G$, for all $\xi\in\theta^\perp$
\[ \widehat{ \Tt g }(\xi) = \mathscr{C}_{n,k}\int_{\{ \theta: \theta\perp\xi\}}  \widehat{g}(\theta,\xi)\; d\gamma_{\xi^\perp}(\theta) \]
where for functions on $\M$ the Fourier transform is taken only in the $y$-variable and $d\gamma_{\xi^\perp}$ represents the restriction of the measure $d\gamma$ to the subset of $k$-planes which are perpendicular to $\xi$ and $\mathscr{C}_{n,k}$ is a constant that depends on the normalization chosen for the integrals appearing in the definitions of the operators. 
\end{lem}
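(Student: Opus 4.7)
The plan is to compute $\widehat{\Tt g}(\xi)$ as a tempered distribution by testing against an arbitrary $\phi\in\mathscr S(\R^n)$ and reducing everything to the Fourier slice identity for $\Tnk$ quoted immediately before the lemma. First I would use the multiplication formula $\int(\Tt g)\hat\phi\,dx=\int\widehat{\Tt g}\,\phi\,d\xi$ together with the $\Tnk,\Tt$ duality $\int_{\R^n}(\Tt g)\hat\phi\,dx=\int_{\M}g\cdot\Tnk(\hat\phi)\,d\lambda_{\theta^\perp}\,d\theta$ to move the problem onto the manifold side.

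Next, for each fixed $\theta\in\G$ I would apply Parseval in the translation variable $y\in\theta^\perp$ in the form $\int_{\theta^\perp}f(y)h(y)\,dy=\int_{\theta^\perp}\widehat f(\eta)\widehat h(-\eta)\,d\eta$, combined with the slice identity $\widehat{(\Tnk\hat\phi)_\theta}(-\eta)=c_{n,k}\widehat{\hat\phi}(-\eta)=c_{n,k}\phi(\eta)$ for $\eta\in\theta^\perp$, to arrive at
\[ \int\widehat{\Tt g}(\xi)\,\phi(\xi)\,d\xi = c_{n,k}\int_{\G}\int_{\theta^\perp}\widehat g(\theta,\eta)\,\phi(\eta)\,d\lambda_{\theta^\perp}(\eta)\,d\theta. \]
The crucial remaining step is a coarea/disintegration on the incidence manifold $I=\{(\theta,\eta)\in\G\times\R^n:\eta\in\theta^\perp\}$, rewriting the iterated integral on the right in the form $\int_{\R^n}\phi(\xi)\bigl(\int_{\{\theta\perp\xi\}}\widehat g(\theta,\xi)\,d\gamma_{\xi^\perp}(\theta)\bigr)\,d\xi$. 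This is forced by $O(n)$-equivariance: $O(n)$ acts transitively on each level set $\{|\eta|=r\}\subset I$, so up to a scalar depending only on $r$ the two natural product-measure structures on $I$ must agree, and $d\gamma_{\xi^\perp}$ is the $O(\xi^\perp)$-invariant measure on $\{\theta\in\G:\theta\perp\xi\}$ normalized to make this identity hold.

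The main obstacle is this disintegration: pinning down the fiber measure $d\gamma_{\xi^\perp}$ and verifying the associated Jacobian. Because $\theta^\perp$ is $(n-k)$-dimensional inside $\R^n$, the projection $(\theta,\eta)\mapsto\eta$ from $I$ to $\R^n$ is not measure-preserving, and a homogeneity factor in $|\xi|$ appears; this factor, together with the invariant normalization of the fiber measure, is exactly what determines $d\gamma_{\xi^\perp}$ and the constant $\mathscr C_{n,k}$. Once the disintegration is secured, arbitrariness of $\phi$ yields the claimed formula, and the rest of the argument is routine Fubini and unfolding of Fourier transforms.
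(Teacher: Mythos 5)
Your approach is correct but genuinely different from the paper's. The paper computes $\widehat{\Tt g}(\xi)$ head-on: unfold $\Tt g(x)=\int_{\G}g(\theta,P_{\theta^\perp}(x))\,d\gamma(\theta)$ inside the Fourier integral, apply Fubini, then for each fixed $\theta$ split $x=x_1+x_2$ with $x_1\in\theta^\perp$, $x_2\in\theta$ (and likewise $\xi=\xi_1+\xi_2$); the $x_1$-integral produces $\widehat g(\theta,\xi_1)$ and the $x_2$-integral over the $k$-plane $\theta$ produces a Dirac mass $\delta_\theta(\xi_2)$ enforcing $\theta\perp\xi$, which collapses the $\G$-integral to the sub-Grassmannian $\{\theta:\theta\perp\xi\}$. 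You instead dualize against a test function $\phi$, push the problem to the manifold side via the $\Tnk$--$\Tt$ pairing, apply Parseval in the $y$-variable, and invoke the Fourier slice identity for $\Tnk$ that the paper quotes just before the lemma. Both routes terminate at exactly the same iterated integral $\int_{\G}\int_{\theta^\perp}\widehat g(\theta,\eta)\phi(\eta)\,d\lambda_{\theta^\perp}(\eta)\,d\gamma(\theta)$ that must be disintegrated along the projection $(\theta,\eta)\mapsto\eta$ of the incidence variety, and this is where the fiber measure $d\gamma_{\xi^\perp}$ and the constant come from. Your route buys you a proof that leverages the already-established slice theorem for $\Tnk$ rather than re-deriving a slice formula for $\Tt$ from scratch, at the price of having to invoke duality and Parseval. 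The paper's route is more self-contained but hides the same disintegration step behind the informal $\delta_\theta(\xi_2)$ notation, passing directly to the final formula without exhibiting the Jacobian. Your observation that this Jacobian carries a homogeneity in $|\xi|$ (a $|\xi|^{-k}$ factor by scaling, as can be checked by testing both sides on $g_\lambda(\theta,y)=g(\theta,\lambda y)$) is a genuine and worthwhile remark: the paper does not make the normalization of $d\gamma_{\xi^\perp}$ explicit, and any rigorous treatment of this step would have to account for it as you indicate. This does not affect the paper's use of the lemma, since in Lemma \ref{commute} the same factor appears identically on both sides of the intertwining identity and cancels.
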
 

\begin{proof} Using the definitions and Fubini's theorem, 
\begin{align*} \widehat{ \Tt g }(\xi) =&C \int_{\R^n} \Tt g(x) e^{-2\pi i x\cdot \xi}\;dx \\
 =& C\int_{\R^n}\int_{\G}g(\theta,P_{\theta^\perp}(x)) e^{-2\pi i x\cdot \xi}d\gamma(\theta)\;dx \\
 =&C\int_{\G} \int_{\theta}\int_{\theta^\perp}g(\theta,x_1) e^{-2\pi i (x_1\cdot \xi_1+x_2\cdot \xi_2)}\;d\lambda_{\theta^\perp}(x_1)\;d\lambda_{\theta}(x_2)d\gamma(\theta) 
\end{align*}
where $x_1=P_{\theta^\perp}(x)$ and $x_2=P_{\theta}(x)$ and similarly for $\xi$. 
Further, writing $\delta_\theta(\xi_2)$ for the distribution that indicates that $\xi_2\in\theta$ is $0$, 
\begin{align*} \widehat{ \Tt g }(\xi)  =&C\int_{\G} \int_{\theta}\left(\int_{\theta^\perp}g(\theta,x_1)e^{-2\pi i (x_1\cdot \xi_1)}\;d\lambda_{\theta^\perp}(x_1)\right)  e^{-2\pi i (x_2\cdot \xi_2)}\;d\lambda_{\theta}(x_2)\;d\gamma(\theta) \\
=&C\int_{\G}\hat{g}(\theta,\xi_1)  \int_{\theta}  e^{-2\pi i (x_2\cdot \xi_2)}\;d\lambda_{\theta}(x_2)\;d\gamma(\theta) \\
=&C\int_{\G}\hat{g}(\theta,\xi_1)  \delta_\theta(\xi_2)\;d\gamma(\theta) \\
=& C\int_{\{ \theta: \theta\perp\xi\}}  \widehat{g}(\theta,\xi)\; d\gamma_{\xi^\perp}(\theta)
\end{align*}
\end{proof} 

With these formulas in hand, we return to the relationship between $\Tnk$, $\Tnk^*$, and these mollified derivatives. 

\begin{proof}[Proof of Lemma \ref{commute}] 
We prove \eqref{Tsdls}. The proof of the other equation is similar.  
\[ \derivslx{s}{\Lambda} \Tnk^* g(x)=\left(  \widehat{\derivslx{s}{\Lambda} \Tnk^* g}(\xi) \right)^\vee =\left( \langle \xi \rangle ^s\langle\Lambda^{-1}\xi \rangle^{-s}\widehat{ \Tnk^* g}(\xi) \right)^\vee  \] 
\[ = \int_{\R^n}  e^{2\pi i x\cdot \xi }\langle \xi \rangle ^s\langle\Lambda^{-1}\xi \rangle^{-s} \widehat{ \Tnk^* g}(\xi)\; d\xi   . \] 
Applying Lemma \ref{DualF} to the right-hand side, 
\[ \derivslx{s}{\Lambda} \Tnk^* g(x) = \int_{\R^n} e^{2\pi i x\cdot \xi }\langle \xi \rangle ^s\langle\Lambda^{-1}\xi \rangle^{-s}\int_{\{ \theta: \theta\perp\xi\}}  \widehat{g}(\theta,\xi) \;d\gamma_{\xi^\perp}(\theta)\; d\xi. \] 
\[ = \int_{\R^n} e^{2\pi i x\cdot \xi }\int_{\{ \theta: \theta\perp\xi\}}  \langle \xi \rangle ^s\langle\Lambda^{-1}\xi \rangle^{-s}\widehat{g}(\theta,\xi) \;d\gamma_{\xi^\perp}(\theta)\; d\xi. \] 
By the definition of $\derivsly{s}{\Lambda}$, 
\[ = \int_{\R^n} e^{2\pi i x\cdot \xi }\int_{\{ \theta: \theta\perp\xi\}}\widehat{ \derivsly{s}{\Lambda}g}(\theta,\xi)\; d\gamma_{\xi^\perp}(\theta)\; d\xi. \] 
Using Lemma \ref{DualF}  again, 
\[ = \int_{\R^n} e^{2\pi i x\cdot \xi } \widehat{\Tnk^*[ \derivsly{s}{\Lambda}g]}(\xi) \; d\xi. \] 
%\[ \widehat{ \T f }(\theta,\xi) = \hat{f}(\xi) \]
  % \hspace{1in}  \xi\in\theta^\perp \] 
 
\end{proof}

These mollified derivatives, like fractional derivatives, fail to satisfy Leibniz's rule for derivatives of products. However a modified Kato-Ponce inequality \cite{KP88} holds and is sufficient for our purposes: 

\begin{lem}\label{katoponce}
Let $d\geq2$. Let $s\in(0,\infty)$ and $\Lambda\in(4,\infty)$. Suppose that $r^{-1}=p_j^{-1}+q_j^{-1}$ for $j=1,2$ and that all exponents $r,p_j,q_j$ belong to the open interval $(1,\infty)$. Let $u\geq 0$ be a locally integrable function on $\R^n$. Suppose further that the weight $u$ belongs to $A_r$ and that $u=u_1v_1=u_2v_2$ where { $u_j^{p_j/r}\in A_{p_j}$ and $v_j^{q_j/r}\in A_{q_j}$.}   Then  there exists a constant $C$ such that for all $\Lambda>1$  the following inequality holds, whenever the right hand side is finite: 
\[ \|\derivslx{s}{\Lambda}(fg)\|_{ L^r(u)} \leq C \| \derivslx{s}{\Lambda}(f) \|_{L^{p_1}(u_1^{p_1/r})}\| g \|_{L^{q_1}(v_1^{q_1/r})}+ C\| f \|_{L^{p_2}(u_2^{p_2/r})}\| \derivslx{s}{\Lambda}(g) \|_{L^{q_2}(v_2^{q_2/r})}.  \] 
\end{lem}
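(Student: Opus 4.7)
The plan is to follow the paraproduct approach of Christ--Weinstein \cite{CW91}, adapted in two ways: to the mollified symbol $m_\Lambda(\xi)=\langle\xi\rangle^s/\langle\Lambda^{-1}\xi\rangle^s$ instead of the homogeneous $|\xi|^s$, and to the weighted Lebesgue spaces specified by the $A_p$ hypotheses. The crux is that all constants must be uniform in $\Lambda\geq 4$, so the Mikhlin-type bound from Lemma \ref{niceMD} (whose constant depends on $\Lambda$) cannot be applied globally; instead it must be applied after a dyadic rescaling.

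First I would fix a Littlewood--Paley resolution $I=\sum_{j\in\Z}\Delta_j$ with $\Delta_j$ localized to $|\xi|\sim 2^j$ and write the product as a sum of three paraproducts,
\[ fg \;=\; \Pi_{\mathrm{LH}}(f,g)+\Pi_{\mathrm{HL}}(f,g)+R(f,g), \]
where $\Pi_{\mathrm{LH}}(f,g)=\sum_k S_{k-3}f\cdot\Delta_k g$, $\Pi_{\mathrm{HL}}$ is the symmetric piece, and $R(f,g)=\sum_{|j-k|\leq 2}\Delta_j f\,\Delta_k g$ is the diagonal remainder. The key observation, obtained from the computation already performed in \eqref{Lambdas}, is that on a dyadic shell $|\xi|\sim 2^k$ the multiplier satisfies $|\partial_\xi^\alpha m_\Lambda(\xi)|\lesssim m_\Lambda(2^k)\cdot 2^{-k|\alpha|}$ uniformly in $\Lambda$; equivalently, $m_\Lambda(2^k)^{-1}m_\Lambda(\xi)\chi_k(\xi)$ is a Mikhlin symbol of norm $O(1)$ independent of $\Lambda$.

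For the low-high piece, $S_{k-3}f\cdot\Delta_k g$ has frequency support in $|\xi|\sim 2^k$, so $\derivslx{s}{\Lambda}$ acts on this summand essentially as multiplication by $m_\Lambda(2^k)$ followed by a fixed LP operator $\widetilde\Delta_k$. Using $|S_{k-3}f|\leq Mf$ pointwise, weighted Hölder (legitimate because $u=u_2 v_2$ with $1/r=1/p_2+1/q_2$), the Fefferman--Stein vector-valued maximal inequality on $L^{p_2}(u_2^{p_2/r})$ (valid since $u_2^{p_2/r}\in A_{p_2}$), and the weighted Littlewood--Paley square function characterization of $L^{q_2}(v_2^{q_2/r})$ (valid since $v_2^{q_2/r}\in A_{q_2}$), I obtain
\[ \|\derivslx{s}{\Lambda}\Pi_{\mathrm{LH}}(f,g)\|_{L^r(u)} \lesssim \|f\|_{L^{p_2}(u_2^{p_2/r})}\,\|\derivslx{s}{\Lambda} g\|_{L^{q_2}(v_2^{q_2/r})}. \]
The high-low piece is handled symmetrically and produces the first term with indices $1$.

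For the diagonal remainder $R(f,g)$, the frequency of $\Delta_j f\cdot \Delta_k g$ is only bounded above by $2^{j+C}$, so I insert an outer decomposition $\derivslx{s}{\Lambda}R(f,g)=\sum_\ell \Delta_\ell \derivslx{s}{\Lambda}R(f,g)$, noting that $\Delta_\ell$ forces $j\geq \ell-C$ in the surviving terms. I then distribute the derivative by bounding the shell multiplier by $m_\Lambda(2^j)\lesssim m_\Lambda(2^j)+m_\Lambda(2^k)$ on each near-diagonal pair, regroup via Cauchy--Schwarz in $(j,k)$, and apply the weighted square function in the outer $\ell$ variable (controlled by $u\in A_r$) to reduce to one of the two product estimates used in Steps~3--4.

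The main obstacle is controlling $m_\Lambda$ uniformly in $\Lambda$ at high frequencies: the symbol transitions from $|\xi|^s$ behavior for $|\xi|\lesssim\Lambda$ to the bounded regime $\sim\Lambda^s$ for $|\xi|\gtrsim\Lambda$, so any bound using $\Lambda^s$ must be absorbed into the LP piece applied to $f$ or $g$ rather than paid as an unbounded constant. The shell-by-shell Mikhlin estimate above, together with the fact that $m_\Lambda(2^k)$ is exactly the \emph{same} factor appearing when $\derivslx{s}{\Lambda}$ is applied directly to $\Delta_k f$ or $\Delta_k g$, is what makes the estimate close with a $\Lambda$-independent constant. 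A secondary nuisance, noted in the excerpt, is the technical correction to the definition of $\derivslx{s}{\Lambda}$ from \cite{CX}: one must verify that the present inhomogeneous choice $\langle\xi\rangle^s/\langle\Lambda^{-1}\xi\rangle^s$ still enjoys the shell estimates, which it does by direct computation.
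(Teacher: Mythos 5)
Your low-high and high-low pieces are fine, and the shell-by-shell Mikhlin estimate $|\partial^\alpha_\xi m_\Lambda(\xi)|\lesssim m_\Lambda(2^k)2^{-k|\alpha|}$ on $|\xi|\sim 2^k$ does hold uniformly in $\Lambda$ (it follows from the paper's estimates \eqref{eq:LambdaDecay} and \eqref{eq:DecayNoLambda}). But your treatment of the diagonal piece $R(f,g)=\sum_{|j-k|\leq 2}\Delta_j f\,\Delta_k g$ does not close. After you insert the outer Littlewood--Paley decomposition $\sum_\ell\Delta_\ell$ and pass the factor $m_\Lambda(2^\ell)$ to the high-frequency input as $m_\Lambda(2^j)$, you still need to sum over $j\geq\ell-C$ for each fixed $\ell$, and then take an $\ell^2$-sum in $\ell$. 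In the classical homogeneous case this converges because the ratio $|2^\ell|^s/|2^j|^s=2^{-(j-\ell)s}$ supplies geometric decay; here the analogue is $m_\Lambda(2^\ell)/m_\Lambda(2^j)$, which for $2^\ell,2^j\gtrsim\Lambda$ is $\sim 1$ rather than $\sim 2^{-(j-\ell)s}$, since $m_\Lambda$ flattens to $\sim\Lambda^s$ above scale $\Lambda$. Your step ``bounding the shell multiplier by $m_\Lambda(2^j)\lesssim m_\Lambda(2^j)+m_\Lambda(2^k)$, regroup via Cauchy--Schwarz, apply the weighted square function in $\ell$'' glosses over exactly this: once the $\ell$-dependence has been replaced by a $j$-dependent quantity, the square function in $\ell$ is a sum over $\ell$ of essentially the same thing, which diverges.

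This is precisely what the paper's $\kappa$-truncation is for. Instead of the three-term Bony decomposition, the paper fixes $\kappa$ with $2^{\kappa-1}<\Lambda\leq 2^\kappa$, writes $f=P_{j_0-1}f+\sum_{j=j_0}^\kappa Q_j f + R_\kappa f$, and puts the diagonal interactions into the truncated main sums $\sum_{j\leq\kappa}Q_j f\,P_j g$ (which are handled by the Littlewood--Paley square function with the usual geometric factor $2^{(l-j)s}$, legitimately, because $j\leq\kappa$ means $2^j\lesssim\Lambda$ so $m_\Lambda\sim|\xi|^s$ there) and into remainder terms such as $R_\kappa f\,P_\kappa g$ and $R_\kappa f\,R_\kappa g$. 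The remainder terms have one factor supported on $|\xi|\geq 2^\kappa\sim\Lambda$ and are handled \emph{without} the square function: $2^{-s\kappa}m_\Lambda$ is a uniformly bounded Mikhlin symbol by \eqref{eq:LambdaDecay}, the extra $2^{s\kappa}$ is then absorbed onto $R_\kappa f$ via the multiplier $2^{s\kappa}(1-\eta(2^{-\kappa}\xi))\langle\Lambda^{-1}\xi\rangle^s/\langle\xi\rangle^s$, which is bounded on the support because $|\xi|\geq 2^{\kappa}$. If you want your Bony-style decomposition to work, you must add a split of the diagonal at $j=\kappa$: treat $j\leq\kappa$ by the homogeneous argument and treat $j>\kappa$ by factoring out $\Lambda^s$ and using a direct H\"older/maximal-function/Calder\'on--Zygmund argument, exactly as the paper does for the remainder. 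Without that split, the argument as sketched does not establish a $\Lambda$-independent constant.
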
 

For standard fractional derivatives a similar weighted Lebesgue space result was proved in \cite{CUN16},\cite{CUN22err} for a larger range of exponents.  Our proof follows the methods of Christ and Weinstein in \cite{CW91}, but is rather long and consequently is deferred to the appendix. As we are specifically interested in the case where the weights are of the form $\langle x \rangle^\alpha$, it might be possible to prove the result for a wider range of weights and exponents as in \cite{OhWu22} but we do not pursue this here. 

We also require a version modified for the space $\M$.

\begin{lem}\label{mdgood}
 Let $s\in(0,\infty)$and $\Lambda\in(4,\infty)$. Suppose that $r^{-1}=p_j^{-1}+q_j^{-1}$ for $j=1,2$ and that all exponents $r,p_j,q_j$ belong to the open interval $(1,\infty)$. Let $u\geq 0$ be a locally integrable function on $\M$. Suppose the weight $u_\theta$ belongs to $A_r(\theta^\perp)$, uniformly in $\theta$, and that for each $\theta\in\G$ $u_\theta=u_{\theta,1}v_{\theta,1}=u_{\theta,2}v_{\theta,2}$ where $u_{\theta,j}^{p_j/r}\in A_{p_j}(\theta^\perp)$ and $v_{\theta,j}^{q_j/r}\in A_{q_j}(\theta^\perp)$.  Then there exists a constant $C$ such that $\derivsly{s}{\Lambda}(fg)\in L^r(\M,u)$ and the following inequality holds, whenever the right hand side is finite: 
\begin{multline*} 
\|\derivsly{s}{\Lambda}(fg)\|_{ L^r(\M,u)} \leq  C \| \derivsly{s}{\Lambda}(f) \|_{L^{p_1}(\M,u^{p_1/r})}\| g \|_{L^{q_1}(\M,v^{q_1/r})}+ \\ C\| f \|_{L^{p_2}(\M,u^{p_2/r})}\| \derivsly{s}{\Lambda}(g) \|_{L^{q_2}(\M,v^{q_2/r})}.
\end{multline*}
\end{lem}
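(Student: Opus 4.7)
The plan is to reduce Lemma \ref{mdgood} to the Euclidean Kato--Ponce inequality, Lemma \ref{katoponce}, by integrating fiberwise over the Grassmannian. Since the operator $\derivsly{s}{\Lambda}$ acts purely in the translation variable $y$ with $\theta \in \G$ held fixed, if we set $f_\theta(y) := f(\theta,y)$ and $g_\theta(y) := g(\theta,y)$, then under the identification $\theta^\perp \cong \R^{n-k}$ we have the pointwise identity
\[ \derivsly{s}{\Lambda}(fg)(\theta,y) = \derivslx{s}{\Lambda}(f_\theta g_\theta)(y), \]
where the right-hand operator is the Euclidean mollified derivative on $\theta^\perp$ from Definition \ref{derivdef}.

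First, I would apply Lemma \ref{katoponce} on $\theta^\perp$ to the product $f_\theta g_\theta$ with the fiberwise weights $u_\theta, u_{\theta,j}, v_{\theta,j}$ that by hypothesis satisfy the requisite $A_p$ conditions uniformly in $\theta$. Extracting a bound on $\|\derivslx{s}{\Lambda}(f_\theta g_\theta)\|_{L^r(\theta^\perp,u_\theta)}$ with constant independent of $\theta$, raising to the $r$-th power, and using $(a+b)^r \lesssim a^r + b^r$ yields
\[ \int_{\theta^\perp}|\derivsly{s}{\Lambda}(fg)(\theta,y)|^r u_\theta(y)\,d\lambda_{\theta^\perp}(y) \leq C\, A_\theta^r B_\theta^r + C\, \tilde A_\theta^r \tilde B_\theta^r, \]
with $A_\theta = \|\derivslx{s}{\Lambda} f_\theta\|_{L^{p_1}(u_{\theta,1}^{p_1/r})}$, $B_\theta = \|g_\theta\|_{L^{q_1}(v_{\theta,1}^{q_1/r})}$, and analogously for $\tilde A_\theta, \tilde B_\theta$ using the subscript $2$ data.

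Second, I would integrate over $\theta \in \G$ and apply H\"older's inequality. Since $r^{-1} = p_1^{-1} + q_1^{-1}$, the exponents $p_1/r$ and $q_1/r$ are conjugate, so
\[ \int_\G A_\theta^r B_\theta^r \,d\theta \leq \Big(\int_\G A_\theta^{p_1}\,d\theta\Big)^{r/p_1}\Big(\int_\G B_\theta^{q_1}\,d\theta\Big)^{r/q_1}. \]
Unfolding the inner norms against the product measure $d\lambda_{\theta^\perp}\,d\theta$ on $\M$, the right-hand side becomes $\|\derivsly{s}{\Lambda} f\|_{L^{p_1}(\M,u_1^{p_1/r})}^r \|g\|_{L^{q_1}(\M,v_1^{q_1/r})}^r$, and the second pair is handled identically. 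Taking the $r$-th root delivers the claimed inequality.

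The principal obstacle, though essentially routine, is confirming that the constant produced by Lemma \ref{katoponce} is truly uniform in $\theta$. This requires that the proof of Lemma \ref{katoponce} (given in the appendix) expose the dependence of the constant on the weights only through their $A_p$ characteristics, rather than, say, on pointwise size. Because that proof is built on H\"ormander--Mihlin multiplier bounds and Calder\'on--Zygmund estimates, whose constants depend on the weight solely through its $A_p$ characteristic, the uniformity hypothesis on $u_\theta, u_{\theta,j}, v_{\theta,j}$ should transfer directly. A minor auxiliary point is checking measurability in $\theta$ of the various norms $A_\theta,B_\theta,\ldots$ so that Fubini and H\"older are legitimate; this follows from standard density arguments using that $f,g$ may be taken in the Schwartz class $\mathscr{S}(\M)$.
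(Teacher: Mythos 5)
Your proposal is correct and is essentially the proof in the paper: both apply the Euclidean Kato--Ponce inequality (Lemma \ref{katoponce}) fiberwise on each $\theta^\perp$, split the two terms using $(a+b)^r \lesssim a^r + b^r$ (what the paper calls Minkowski's inequality), and then apply H\"older on $\G$ with conjugate exponents $p_j/r$ and $q_j/r$ to unfold into the $\M$-norms. Your remarks about uniformity in $\theta$ of the $A_p$ characteristics and about measurability are the right things to flag and do not represent a gap.
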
  
\begin{proof} By definition, 
\[ \|\derivsly{s}{\Lambda}(fg)\|_{ L^r(\M,u)}^r  = \int_{\G} \left( \int_{\theta^\perp} |\derivsly{s}{\Lambda}(fg)|^r u_\theta\; d\lambda_{\theta^\perp}y \right) \;d\theta.\] 
Applying Lemma \ref{katoponce} to the inner integral for each $\theta$ gives, 
\begin{multline*} 
\| \|\derivsly{s}{\Lambda}(fg)\|_{ L^r(\M,u)}^r \leq
 C \int_{\G} \left(  \| \derivslx{s}{\Lambda}(f) \|_{L^{p_1}(\theta^\perp, u_{\theta,1}^{p_1/r})}\| g \|_{L^{q_1}(\theta^\perp, v_{\theta,1}^{q_1/r})} \right. \\
\left. + \| f \|_{L^{p_2}(\theta^\perp, u_{\theta,2}^{p_2/r})}\| \derivslx{s}{\Lambda}(g) \|_{L^{q_2}(\theta^\perp, v_{\theta,2}^{q_2/r})} \right)^r\;d\theta.
\end{multline*}
Using Minowski's integral inequality, 
\begin{multline*} 
\| \|\derivsly{s}{\Lambda}(fg)\|_{ L^r(\M,u)}^r \leq
 C \int_{\G} \left(  \| \derivslx{s}{\Lambda}(f) \|_{L^{p_1}(\theta^\perp, u_{\theta,1}^{p_1/r})}\| g \|_{L^{q_1}(\theta^\perp, v_{\theta,1}^{q_1/r})}\right)^r\;d\theta \\
  +   C \int_{\G} \left(\| f \|_{L^{p_2}(\theta^\perp, u_{\theta,2}^{p_2/r})}\| \derivslx{s}{\Lambda}(g) \|_{L^{q_2}(\theta^\perp, v_{\theta,2}^{q_2/r})} \right)^r\;d\theta. 
\end{multline*}
The lemma then follows by H\"older's inequality. 
\end{proof}

These lemmas allow us analyze the smoothness of $\mathcal{S}f$ using its multilinear structure.

\begin{lem} \label{bigstepestimate} For all  $\rho \geq 0$ sufficiently small,  there exist $\varrho$ and $\varrho'$, such that $0<\varrho'\leq\varrho<\rho$ as well as a constant $C$ such that for all $s>0$ and $\Lambda\geq1 $, for all $f\in X_\rho$, 

\[ \| \derivslx{s}{\Lambda}\Sel f\|_{X_\varrho} \leq C \|f\|_{X_\rho}^{p_{el}q_{el}-1} \| \derivsly{s}{\Lambda} \Tnk f \|_{Y_{*,\varrho'}}.  %  C \| \derivsly{s}{\Lambda} \Tnk f \|_{Y_{*,\varrho'}}
\] 
\end{lem}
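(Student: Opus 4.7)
The plan is to combine the multilinear structure of $\Sel$ with the weighted Leibniz-type inequalities (Lemmas~\ref{katoponce} and~\ref{mdgood}), using the intertwining identities (Lemma~\ref{commute}) to pass $\derivslx{s}{\Lambda}$ through $\Tnk^*$, and the weighted boundedness of the $k$-plane transform and its adjoint (Lemma~\ref{bddops}) to move between $\R^n$ and $\M$. Write $\Sel f = u^{p_{el}}$, where $u = \Tnk^{*} g$ and $g = (\Tnk f)^{q_{el}}$. The scheme is: split off the derivative from the $p_{el}$-fold product $u^{p_{el}}$; push it through $\Tnk^*$; split off the derivative from the $q_{el}$-fold product $(\Tnk f)^{q_{el}}$; land it on a single $\Tnk f$ factor in $Y_{*,\varrho'}$; and estimate every undifferentiated factor via Lemma~\ref{bddops}.

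Step 1. Apply Lemma~\ref{katoponce} iteratively $p_{el}-1$ times to $\derivslx{s}{\Lambda}(u^{p_{el}})$. Taking $r = p_\varrho$ and $u = w^{\varrho p_\varrho}$, and choosing an exponent pair $(a,b)$ with $a^{-1} + (p_{el}-1) b^{-1} = p_\varrho^{-1}$ together with compatible weight splittings, one obtains
\[ \|\derivslx{s}{\Lambda}\Sel f\|_{X_\varrho} \leq C\, \|\derivslx{s}{\Lambda} u\|_{L^{a}(W)}\, \|u\|_{L^{b}(V)}^{p_{el}-1}, \]
where $W,V$ are chosen so that the $L^b(V)$ norm coincides with $\|u\|_{Y_{\sigma_1}}$ and the $L^a(W)$ norm is of the form $\|\cdot\|_{Y_{\sigma_2}}$ for some $\sigma_1,\sigma_2\in (0,\rho]$. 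The requisite $A_p$ hypotheses of Lemma~\ref{katoponce} follow from Lemma~\ref{ApWeights}, provided $\rho$ is small enough.

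Step 2. Use $\derivslx{s}{\Lambda} u = \Tnk^{*}\derivsly{s}{\Lambda} g$ from Lemma~\ref{commute}, and Lemma~\ref{bddops} to bound $\|\derivslx{s}{\Lambda} u\|_{Y_{\sigma_2}} \leq C\,\|\derivsly{s}{\Lambda} g\|_{X_{*,\sigma_2}}$. Then apply Lemma~\ref{mdgood} iteratively $q_{el}-1$ times to $\derivsly{s}{\Lambda}((\Tnk f)^{q_{el}})$, with $r = q'_{\sigma_2}$ and weight $w_{*}^{\sigma_2 q'_{\sigma_2}}$, choosing the splitting so that one factor ends up as $\|\derivsly{s}{\Lambda}\Tnk f\|_{Y_{*,\varrho'}}$ and the remaining $q_{el}-1$ factors are measured in some $Y_{*,\tau}$. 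This yields
\[ \|\derivsly{s}{\Lambda} g\|_{X_{*,\sigma_2}} \leq C\, \|\Tnk f\|_{Y_{*,\tau}}^{q_{el}-1}\, \|\derivsly{s}{\Lambda}\Tnk f\|_{Y_{*,\varrho'}}. \]
The $A_{q_{t}}$ and $A_{q'_{t}}$ conditions needed for Lemma~\ref{mdgood} follow from Lemma~\ref{ApWeightsThetaPerp}, again for sufficiently small $\rho$.

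Step 3. Close up the undifferentiated factors. For the $u$-factors, chain Lemma~\ref{bddops} with Lemma~\ref{littlethings}: $\|u\|_{Y_{\sigma_1}} \leq C\,\|g\|_{X_{*,\sigma_1}} = C\,\|\Tel f\|_{X_{*,\sigma_1}} \leq C\,\|f\|_{X_{\sigma_1}}^{q_{el}} \leq C\,\|f\|_{X_\rho}^{q_{el}}$. For the $\Tnk f$ factors, $\|\Tnk f\|_{Y_{*,\tau}}\leq C\,\|f\|_{X_\tau}\leq C\,\|f\|_{X_\rho}$. The total power of $\|f\|_{X_\rho}$ is $(p_{el}-1)q_{el} + (q_{el}-1) = p_{el}q_{el}-1$, as required. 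The range $\Lambda\in[1,4]$ omitted by Lemma~\ref{katoponce} is handled directly by Lemma~\ref{niceMD}.

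The main obstacle is simply bookkeeping: one must pick $\varrho,\varrho',\sigma_1,\sigma_2,\tau$ positive, all no larger than $\rho$, so that (i) the H\"older exponent identities in Lemmas~\ref{katoponce} and~\ref{mdgood} are satisfied, (ii) the compatibility $u = u_jv_j$ between weights splits into $A_{p_j}$ factors, and (iii) every intermediate weight lies in the correct Muckenhoupt class. Each of these is an open condition in the parameters near $0$, and there are only finitely many such conditions to satisfy, so a valid choice exists whenever $\rho$ is sufficiently small; the constant $C$ then depends only on $q_0,n,k,s$ and the chosen $\varrho,\varrho'$, as claimed.
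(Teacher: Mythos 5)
Your proposal follows essentially the same route as the paper's proof: apply the weighted Kato--Ponce inequality (Lemma~\ref{katoponce}) on $\R^n$ to split the derivative off the $p_{el}$-fold power, commute $\derivslx{s}{\Lambda}$ through $\Tnk^*$ via Lemma~\ref{commute}, apply the manifold version (Lemma~\ref{mdgood}) to split the derivative off the $q_{el}$-fold power, and close up every undifferentiated factor using Lemma~\ref{bddops}. Two small points worth tightening: the exponent-matching conditions are exact linear relations (the paper pins them down as $p_{el}\varrho=\varrho''+(p_{el}-1)\rho$ and $q_{el}\varrho''=\varrho'+(q_{el}-1)\rho$) rather than ``open conditions near $0$''; and Lemma~\ref{niceMD} bounds the left side for $\Lambda\in[1,4]$ only by $\|f\|_{X_\varrho}^{p_{el}q_{el}}$, which is not the stated right-hand side $\|f\|_{X_\rho}^{p_{el}q_{el}-1}\|\derivsly{s}{\Lambda}\Tnk f\|_{Y_{*,\varrho'}}$ --- in practice the paper only invokes the lemma for $\Lambda\geq 4$, so this does not affect the application.
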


\begin{proof}

The definition of $\Sel$ allows us to rewrite the left-hand side in such a way that our Kato-Ponce  Lemma \ref{katoponce} and Lemma \ref{mdgood}  together with the operator bounds from Lemma \ref{bddops} give the result for appropriately chosen $\varrho$ and $\varrho'$. The details follow below. 

Let $\rho>0$ be sufficiently small that $w^{\rho p_\rho}\in A_{p_\rho}$ (by Lemma \ref{ApWeights}) and take $f\in X_\rho$.  Choose $\varrho''$ and $\varrho$ such that $0<\varrho''\leq\varrho\leq \rho$ and sufficiently small that $w^{\varrho p'_{\varrho}/p_{el}}\in A_{p_\varrho}$ and $w^{\varrho'' p'_{\varrho''}/p_{el}}\in A_{p_{\varrho'}}$ (again by Lemma \ref{ApWeights} ), and   
\[p_{el}\varrho = \varrho'' + (p_{el}-1)\rho.\]

For this choice of $\varrho$ and $\varrho''$, using that $p_{el}=1/(p_0-1)$ and the definition of the $X_t$ and $Y_t$ spaces (\ref{def:XtYt}),  Lemma \ref{katoponce}  applies and gives that 
\begin{align*}
\| \derivslx{s}{\Lambda}\Sel f\|_{X_\varrho} = & \| \derivslx{s}{\Lambda} \left( \Tnk^* \left(\Tnk f\right)^{q_{el}}\right)^{p_{el}}\|_{X_\varrho}\\
\leq & C \sum_{i=1}^{p_{el}}\|   \Tnk^* \left(\Tnk f\right)^{q_{el}} \|_{Y_\rho}^{p_{el}-1} \left\| \derivslx{s}{\Lambda} \Tnk^* \left(\Tnk f\right)^{q_{el}} \right\|_{Y_{\varrho''}}   \\
  \leq & C  \|   \Tnk^* \left(\Tnk f\right)^{q_{el}} \|_{ Y_\rho}^{p_{el}-1}\left\| \Tnk^* \left(  \derivsly{s}{\Lambda} \left(\Tnk f\right)^{q_{el}} \right) \right\|_{Y_{\varrho''}}  
 \end{align*}
 where the second inequality follows from the observation that the summand is is independent of $i$ and Lemma \ref{commute}.  
 
Using the operator bounds from  Lemma \ref{bddops} and Kato-Pone inequality \ref{mdgood} and a similar argument to that above, with 
\( (q_0-1)\varrho''=\varrho'+(q_0-2)\rho\), yields

\begin{align*}
\| \derivslx{s}{\Lambda}\Sel f\|_{X_\varrho}  \leq & C  \left( \|  \left(\Tnk f\right)^{q_{el}} \|_{X_{*,\rho}}^{p_{el}-1} \right) \left\| \derivsly{s}{\Lambda} \left(\Tnk f\right)^{q_{el}} \right\|_{  X_{*,\varrho'}} \\
 \leq & C \left( \| f \|_{X_\rho}^{q_{el}(p_{el}-1) } \right) \sum_{i=1}^{q_{el}} \left\| \derivsly{s}{\Lambda} \Tnk f \right\|_{Y_{*,\varrho'}}\left\|  \Tnk f\right\|_{Y_{*,\rho}}^{q_{el}-1}  \\
 \leq & C \left( \| f \|_{X_\rho}^{q_{el}p_{el}-1 } \right) \left\| \derivsly{s}{\Lambda} \Tnk f \right\|_{Y_{*,\varrho'}} 
\end{align*}

\end{proof}

We require one last technical lemma: 
 
\begin{lem} \label{niceD1} For all $s,\gamma\in\R$ such that  $s\geq\gamma\geq0$,  for all sufficiently small $t\geq 0$, there exists a constant $C$ such that for all  $\Lambda>1$, for all $h\in X_t$, 
\[  \|  \derivslx{s}{\Lambda}\derivsx{-\gamma} f \|_{X_t}\leq C\|  \derivslx{s}{\Lambda} f \|_{X_t}^{1-\gamma/s} \|f\|_{X_t}^{\gamma/s}. \] 
\end{lem}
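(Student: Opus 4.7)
This is a Gagliardo--Nirenberg style interpolation inequality for the mollified derivative $\derivslx{s}{\Lambda}$ composed with the Riesz potential $\derivsx{-\gamma}$ on the weighted Lebesgue space $X_t$, and the plan is to deduce it from Stein's complex interpolation theorem. The essential enabling input is Lemma \ref{ApWeights}: for $t$ sufficiently small, $w^{tp_t}\in A_{p_t}$, so the weighted Mihlin--H\"ormander multiplier theorem is available on $X_t$.

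The natural analytic family to consider is
\[\widehat{T_z f}(\xi)\;=\;\frac{\langle \xi\rangle^s}{\langle \Lambda^{-1}\xi\rangle^s}\,|\xi|^{-zs}\,\widehat{f}(\xi),\qquad 0\le\mathrm{Re}(z)\le 1,\]
which is analytic in $z$ and satisfies $T_0=\derivslx{s}{\Lambda}$ and $T_{\gamma/s}=\derivslx{s}{\Lambda}\derivsx{-\gamma}$. On the line $\mathrm{Re}(z)=0$, the symbol factors as the multiplier of $\derivslx{s}{\Lambda}$, whose Mihlin derivative estimates were verified in \eqref{Lambdas}, times the imaginary Riesz power $|\xi|^{-iys}$, which is a Calder\'on--Zygmund multiplier with Mihlin constant growing at most polynomially in $|y|$. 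Combining these gives $\|T_{iy}\|_{X_t\to X_t}\le C_\Lambda(1+|y|)^N$.

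The main obstacle is the opposite vertical line $\mathrm{Re}(z)=1$: the symbol there behaves like $|\xi|^{-s}$ near the origin and $T_{1+iy}$ fails to be $X_t\to X_t$ bounded. My plan to circumvent this is to modify the family by replacing the singular factor $|\xi|^{-zs}$ with its mollified analogue $\bigl(\langle\Lambda^{-1}\xi\rangle/\langle\xi\rangle\bigr)^{zs}$; the resulting family $\tilde T_z$ has symbol $\langle\xi\rangle^{s(1-z)}/\langle\Lambda^{-1}\xi\rangle^{s(1-z)}$, reduces to $\derivslx{s}{\Lambda}$ at $z=0$, to the identity at $z=1$, and to $\derivslx{s-\gamma}{\Lambda}$ at $z=\gamma/s$, and satisfies uniform $X_t\to X_t$ bounds on both vertical lines via the same Mihlin calculus \eqref{Lambdas}. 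Applying Stein's theorem in Hadamard three-lines form to
\[F(z)\;=\;\|\derivslx{s}{\Lambda}f\|_{X_t}^{-(1-z)}\|f\|_{X_t}^{-z}\int\bigl(\tilde T_zf\bigr)\,g\,dx\]
for $g$ ranging over a dense unit-normed subset of the dual of $X_t$, then yields
\[\|\derivslx{s-\gamma}{\Lambda}f\|_{X_t}\le C\|\derivslx{s}{\Lambda}f\|_{X_t}^{1-\gamma/s}\|f\|_{X_t}^{\gamma/s}.\]

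The final step is to transfer this bound back to the stated operator $\derivslx{s}{\Lambda}\derivsx{-\gamma}$. The constraint $s\ge\gamma\ge 0$ is exactly what is needed for a pointwise Fourier-symbol comparison between the two operators away from a small neighborhood of the origin; one more application of the weighted Mihlin multiplier theorem handles the high-frequency regime, and a direct Hardy-type weighted estimate (using $w^{tp_t}\in A_{p_t}$) absorbs the low-frequency remainder, completing the argument. The delicate point throughout is keeping the implicit constants independent of $\Lambda$ where they must be; this is guaranteed because the Mihlin seminorms of all relevant symbols in \eqref{Lambdas} have the right dimensional scaling, and the three-lines renormalization homogenizes the $\Lambda$-dependence across the strip.
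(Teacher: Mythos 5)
Your overall strategy --- Stein interpolation of a Fourier--multiplier family, anchored on the weighted Mihlin theorem via Lemma \ref{ApWeights} --- is the same as the paper's. The divergence arises because you read $\derivsx{-\gamma}$ literally as the Riesz potential with multiplier $|\xi|^{-\gamma}$, whereas the paper's proof silently treats it as $\langle\xi\rangle^{-\gamma}$: it writes the multiplier of $\derivslx{s}{\Lambda}\derivsx{-s}$ as $\langle\xi\rangle^{-s}\frac{\langle\xi\rangle^s}{\langle\Lambda^{-1}\xi\rangle^s}=\frac{1}{\langle\Lambda^{-1}\xi\rangle^s}$. With that reading there is no singularity anywhere on the strip: $\frac{1}{\langle\Lambda^{-1}\xi\rangle^s}\langle\xi\rangle^{-i\sigma}$ satisfies Mihlin estimates uniformly in $\Lambda$, the direct family $T_z=\derivslx{s}{\Lambda}\derivsx{-z}$ interpolates in one step with boundary bounds $\|T_{i\sigma}f\|_{X_t}\lesssim\langle\sigma\rangle^c\|\derivslx{s}{\Lambda}f\|_{X_t}$ and $\|T_{s+i\sigma}f\|_{X_t}\lesssim\langle\sigma\rangle^c\|f\|_{X_t}$, and your entire detour through $\tilde T_z$ is subsumed. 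Your $\tilde T_z$ step is in fact correct as far as it goes: it proves $\|\derivslx{s-\gamma}{\Lambda}f\|_{X_t}\leq C\|\derivslx{s}{\Lambda}f\|_{X_t}^{1-\gamma/s}\|f\|_{X_t}^{\gamma/s}$.

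The genuine gap is your final transfer step. If $\derivsx{-\gamma}$ has multiplier $|\xi|^{-\gamma}$, then $\derivslx{s}{\Lambda}\derivsx{-\gamma}$ has symbol $\frac{\langle\xi\rangle^s}{\langle\Lambda^{-1}\xi\rangle^s}|\xi|^{-\gamma}$, which is unbounded as $\xi\to 0$, whereas $\derivslx{s-\gamma}{\Lambda}$ has a bounded symbol. Test against $\hat f_j$ supported in $\{|\xi|\sim 2^{-j}\}$: all the mollified multipliers are comparable to $1$ on that annulus, so both $\|\derivslx{s}{\Lambda}f_j\|_{X_t}$ and $\|\derivslx{s-\gamma}{\Lambda}f_j\|_{X_t}$ are $\approx\|f_j\|_{X_t}$, but $\|\derivslx{s}{\Lambda}\derivsx{-\gamma}f_j\|_{X_t}\sim 2^{j\gamma}\|f_j\|_{X_t}$, so the claimed inequality fails by an unbounded factor at low frequencies. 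No Hardy-type or $A_p$-weighted estimate can repair this --- with the Riesz reading the stated inequality is simply false for generic $f\in X_t$ --- so the sentence ``a direct Hardy-type weighted estimate absorbs the low-frequency remainder'' is not a proof, and the hypothesis $s\geq\gamma\geq 0$ controls only the high-frequency ratio, not the origin. Conversely, under the $\langle\xi\rangle^{-\gamma}$ reading the ratio of symbols is $\langle\Lambda^{-1}\xi\rangle^{-\gamma}$, bounded with $\Lambda$-uniform Mihlin constants, and the transfer is immediate --- but then, as noted above, the detour through $\tilde T_z$ was unnecessary and the paper's one-paragraph interpolation suffices.
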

\begin{proof}
First, the operator $ \derivslx{s}{\Lambda}\derivsx{-s} $ is a Fourier multiplier operator given by the multiplier  
\[ m(\xi)= \langle \xi\rangle^{-s}\frac{\langle \xi \rangle ^s}{\langle\Lambda^{-1}\xi \rangle ^s}=  \frac{1}{\langle\Lambda^{-1}\xi \rangle ^s}. \] 
For each multi-index $\alpha$, there exists $C_\alpha$ independent of $\Lambda\geq 1$  such that
\[ \left| \partial_{\xi}^\alpha(m)\right|\leq  C_{\alpha} |\xi|^{-|\alpha|}.\]
Therefore (\cite{Stein}, pg 26,205), 
\[  \| \derivslx{s}{\Lambda}\derivsx{-s} f\|_{X_t} \leq C \| f \|_{X_t}. \] 
 Additionally, $\derivsx{-i\sigma}$ is bounded on $X_t$ with a norm $\lesssim \langle \sigma \rangle ^c$ uniformly for all $\sigma$ \cite{CX}. Whence, 
\[  \| \derivslx{s}{\Lambda}\derivsx{-s+i\sigma} f\|_{X_t} \leq C \langle \sigma \rangle ^c \| f \|_{X_t}. \] 
Trivially, $\|\derivslx{s}{\Lambda}f\|_{X_t} \leq C \|\derivslx{s}{\Lambda}f\|_{X_t}$. 

The lemma follows from these two estimates by complex interpolation applied to the the analytic family of operators $ \derivslx{s}{\Lambda}\derivsx{-z}$.    
\end{proof}

\section {Conclusion of Proof}\label{sec:conclusion} 
This section is devoted to the proof of the following lemma from which Theorem \ref{main} follows.  

\begin{prop}\label{allthederivatives} Let $n\geq 2$,  $q_0\in(1,n+1]$, such that for $p_0=\frac{ nq_0}{n-k+kq_0}$, $q_{el}=q_0-1$ and $p_{el}=\frac{1}{p_0-1}$ are both integers, and fix $\lambda\in\R$. Take $\rho>0$ and let $f\in X_\rho$ be any real-valued solution of the generalized Euler-Lagrange equation $f=\lambda \Sel f.$ Then there exists $\varrho\in(0,\rho)$,  such that for all $s\geq 0$, $\derivsx{s} f \in X_\varrho$.  
\end{prop}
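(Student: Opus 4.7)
The strategy is a bootstrap on $s$ using mollified derivatives $\derivslx{s}{\Lambda}$, culminating in a self-bounding inequality for $\|\derivslx{s}{\Lambda} f\|_{X_\varrho}$ uniform in $\Lambda$, followed by a limit $\Lambda\to\infty$. First I would fix $\varrho,\varrho'$ with $0<\varrho'\le\varrho<\rho$ as provided by Lemma \ref{bigstepestimate} applied to the initial weight $\rho$, and then establish a smoothing-with-decay estimate for $\Tnk$ of the form
\[
  \|\derivsy{\gamma'}\Tnk h\|_{Y_{*,\varrho'}} \le C\|h\|_{X_\varrho}\qquad(h\in X_\varrho),
\]
for some $\gamma'>0$. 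This is obtained by Stein complex interpolation along the analytic family $\derivsy{\gamma(\varrho)z}\Tnk$ between the endpoint $\|\Tnk h\|_{Y_{*,\varrho}}\le C\|h\|_{X_\varrho}$ furnished by Lemma \ref{bddops} (no smoothing, full decay) and the endpoint $\|\derivsy{\gamma(\varrho)}\Tnk h\|_{L^{q_0}(\M)}\le C\|h\|_{X_\varrho}$ furnished by Lemma \ref{smoothing} (full smoothing, no decay), with the interpolation parameter chosen so that the output weight matches the $\varrho'$ from Lemma \ref{bigstepestimate}.

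Next I would combine this with the intertwining identities $\Tnk\derivsx{\gamma'}=\derivsy{\gamma'}\Tnk$ and $\Tnk\derivslx{s}{\Lambda}=\derivsly{s}{\Lambda}\Tnk$ from Lemma \ref{commute}. A direct Fourier-multiplier computation gives the identity
\[
  \derivsy{\gamma'}\Tnk\bigl[\derivslx{s}{\Lambda}\derivsx{-\gamma'}f\bigr] = \derivsly{s}{\Lambda}\Tnk f,
\]
since the factors $|\eta|^{\gamma'}$ and $|\eta|^{-\gamma'}$ cancel in Fourier. Applying the smoothing-with-decay estimate from the previous paragraph with $h=\derivslx{s}{\Lambda}\derivsx{-\gamma'}f$ and then bounding $\|h\|_{X_\varrho}$ by Lemma \ref{niceD1} yields, for each $s\ge\gamma'$,
\[
  \|\derivsly{s}{\Lambda}\Tnk f\|_{Y_{*,\varrho'}} \le C\|\derivslx{s}{\Lambda} f\|_{X_\varrho}^{1-\gamma'/s}\|f\|_{X_\varrho}^{\gamma'/s}.
\]

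Now I would apply $\derivslx{s}{\Lambda}$ to the Euler--Lagrange equation $f=\lambda\Sel f$ and invoke Lemma \ref{bigstepestimate} to get
\[
  \|\derivslx{s}{\Lambda} f\|_{X_\varrho} \le C|\lambda|\,\|f\|_{X_\rho}^{p_{el}q_{el}-1}\|\derivsly{s}{\Lambda}\Tnk f\|_{Y_{*,\varrho'}}.
\]
Substituting the previous display produces the self-bounding inequality
\[
  \|\derivslx{s}{\Lambda} f\|_{X_\varrho} \le C\|f\|_{X_\rho}^{p_{el}q_{el}-1}\|\derivslx{s}{\Lambda} f\|_{X_\varrho}^{1-\gamma'/s}\|f\|_{X_\varrho}^{\gamma'/s},
\]
with $C$ independent of $\Lambda$. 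Since Lemma \ref{niceMD} guarantees that $\|\derivslx{s}{\Lambda} f\|_{X_\varrho}$ is finite for each fixed $\Lambda$, I may divide by its $(1-\gamma'/s)$-power to obtain a bound uniform in $\Lambda$; values $s\in(0,\gamma')$ are covered by rerunning the argument with a smaller $\gamma''\in(0,s]$ (and correspondingly adjusted $\varrho'$) in place of $\gamma'$. Finally, letting $\Lambda\to\infty$ and noting that $|\xi|^s/\langle\xi\rangle^s$ is a H\"ormander--Mihlin multiplier on $X_\varrho$ for sufficiently small $\varrho$ then delivers $\derivsx{s}f\in X_\varrho$ for every $s\ge 0$.

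The main obstacle is the careful matching of the parameters $\rho,\varrho,\varrho'$ across Lemma \ref{bigstepestimate}, the interpolated smoothing-with-decay estimate, and Lemma \ref{niceD1}, together with the justification that the negative-order Fourier multipliers $\derivsx{-\gamma'}$, $\derivsy{-\gamma'}$ and the cancellation $\derivsy{\gamma'}\derivsy{-\gamma'}=I$ act as expected on the relevant weighted spaces, so that the pivotal identity of paragraph two is a genuine equality of operators applied to $f\in X_\varrho$ rather than merely a formal Fourier multiplier computation.
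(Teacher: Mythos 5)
Your proposal is correct and follows the paper's overall strategy: apply $\derivslx{s}{\Lambda}$ to the Euler--Lagrange equation, use Lemma \ref{bigstepestimate} and Lemma \ref{commute} to get a bound in terms of $\|\derivsly{s}{\Lambda}\Tnk f\|_{Y_{*,\varrho'}}$, trade the $k$-plane transform's smoothing gain against the mollified derivative via Lemma \ref{smoothing} and Lemma \ref{niceD1} to produce the self-bounding inequality $\|\derivslx{s}{\Lambda}f\|_{X_\varrho}\le C\|\derivslx{s}{\Lambda}f\|_{X_\varrho}^{1-\gamma/s}$, invoke finiteness from Lemma \ref{niceMD} to divide, and send $\Lambda\to\infty$. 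The one genuine structural difference is how you get from $Y_{*,\varrho'}$ to the smoothing estimate. The paper does this by hand: it applies log-convexity of the $Y_{*,t}$ norms (Lemma \ref{littlethings}(2)) to split $\|\Tnk\derivslx{s}{\Lambda}f\|_{Y_{*,\varrho'}}$ into a $Y_{*,\varrho}$ piece and a $Y_{*,0}=L^{q_0}(\M)$ piece, absorbs the former back into the left side via Lemma \ref{bddops} and a division, and then hits the $L^{q_0}$ piece with the unweighted Lemma \ref{smoothing}. You instead propose to Stein-interpolate the operator $\derivsy{\gamma(\varrho)z}\Tnk$ between $X_\varrho\to Y_{*,\varrho}$ (Lemma \ref{bddops}) and $X_\varrho\to L^{q_0}(\M)$ with a $\gamma(\varrho)$-derivative gain (Lemma \ref{smoothing}), producing a one-shot weighted smoothing estimate $\|\derivsy{\gamma'}\Tnk h\|_{Y_{*,\varrho'}}\le C\|h\|_{X_\varrho}$. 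The parameter bookkeeping works out: the $Y_{*,t}$ scale has exponent $q_t=q_0/(1-t)$ and weight $w_*^{tq_t/q_{el}}$, so the complex-interpolation intermediate spaces are exactly the $Y_{*,\tau}$ with $\tau=(1-\theta)\varrho$, in agreement with Lemma \ref{littlethings}(2). Your route is conceptually cleaner (it isolates the desired estimate) at the cost of needing polynomial-in-$\sigma$ bounds for the imaginary powers $\derivsy{i\sigma}$ on the weighted spaces at both endpoints, whereas the paper's norm-convexity argument sidesteps that entirely and is more elementary. Two smaller points in your favor: you explicitly flag that the identity $\derivsy{\gamma'}\Tnk[\derivslx{s}{\Lambda}\derivsx{-\gamma'}f]=\derivsly{s}{\Lambda}\Tnk f$ must be justified on the weighted spaces rather than merely formally, and you note that the $\Lambda\to\infty$ limit yields the multiplier $\langle\xi\rangle^s$, so the final passage to $\derivsx{s}$ requires the observation that $|\xi|^s/\langle\xi\rangle^s$ is a H\"ormander--Mihlin multiplier on $X_\varrho$ --- a step the paper leaves implicit. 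Your handling of small $s$ by rerunning with a smaller exponent is also valid and is an alternative to the paper's remark that the case $s\le\gamma(\rho)$ follows by interpolating between $f\in X_\varrho$ and $\derivsx{s}f\in X_\varrho$ for large $s$.
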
 

\begin{proof}[Proof of Theorem \ref{main} using Proposition \ref{allthederivatives}] 
By the decay estimate of Proposition \ref{extraDecay}, if $f\in L^{p_0}(\R^n)$ is a solution of the generalized Euler-Lagrange equation $f=\lambda \Sel f$, there exists $t>0$ such that $f\in X_t$. Thus, the conditions of Proposition \ref{allthederivatives} are met, and there exists $\varrho>0$ such that for all $s\geq 0$, $\derivsx{s} f \in X_\varrho$.  The theorem then follows by Sobolev embedding (see for instance, \cite{Stein70}).

\end{proof}

\begin{proof}[Proof of Proposition \ref{allthederivatives} ]

Fix $\lambda\in\R$.  Let $f\in X_\rho$ for some $\rho>0$ be any solution of the generalized Euler-Lagrange equation $f=\lambda \Sel f $. Let $\gamma(\rho)$ be the small parameter from Lemma \ref{smoothing} which applies because if $p_{el}=\frac{1}{p_0-1}$ is an integer then $p_0\in(1,2]$.

Fix $s>\gamma(\rho)$. Note that if $\derivsx{s} f \in X_\varrho$ for all such $s>\gamma(\rho)$, then interpolation and $ f \in X_\rho \subset X_\varrho$ guarantees the result for all $s$. 

It suffices to consider $\|f\|_{X_\rho}= 1$, as $F= f/\|f\|_{X_\rho}$ will satisfy $f=\lambda\|f\|_{X_\rho}^{p_{el}q_{el}}  \Sel f $. Further, it is enough to prove that there exists a finite constant $C$ independent of $\Lambda$ such that for all $\Lambda\geq 4$, $\|\derivslx{s}{\Lambda}f \|_{X_\varrho} <C$. Then taking $\Lambda\to\infty$, we have $\|\derivsx{s} f \|_{X_\varrho} < C$ for all $s>\gamma(\rho)$.

Using Corollary \ref{bigstepestimate} %\footnote{This step assumes that $p_{el},q_{el}$ are integers so that $\derivslx{s}{\Lambda}\Sel(f)$ can be written as the derivative of a product.} 
there exist $\varrho'<\varrho$ both in $(0,\rho)$,
\begin{equation}\label{bigstep} \| \derivslx{s}{\Lambda}f\|_{X_\varrho} \leq C \| f\|_{X_\rho}^{p_{el}q_{el}-1}\|\Tnk  \derivslx{s}{\Lambda}f \|_{Y_{*,\varrho'}}= C \|  \Tnk \derivslx{s}{\Lambda} f \|_{Y_{*,\varrho'}}.
\end{equation}
Then by convexity of the $Y_{*,t}$ norms (Lemma \ref{littlethings}), there exists $\theta\in(0,1)$ such that  
\[ \|\Tnk \derivslx{s}{\Lambda} f \|_{Y_{*,\varrho'}}\leq \|   \Tnk \derivslx{s}{\Lambda} f \|_{Y_{*,\varrho}}^\theta\|    \Tnk \derivslx{s}{\Lambda} f \|_{Y_{*,0}}^{1-\theta}. \]
Applying the operator bounds $\Tnk$ (Lemma \ref{bddops}), 
\[ \|   \Tnk \derivslx{s}{\Lambda} f \|_{Y_{*,\varrho'}}\leq C \| \derivslx{s}{\Lambda} f \|_{X_{\varrho}}^\theta\|    \Tnk \derivslx{s}{\Lambda}  f \|_{Y_{*,0}}^{1-\theta}. \]
From this and \eqref{bigstep}, we conclude that 
\begin{equation}\label{bddbylq} 
\| \derivslx{s}{\Lambda} f\|_{X_\varrho}\leq C \| \Tnk  \derivslx{s}{\Lambda} f \|_{L^{q_0}(\mathcal{M}_{n,k})}. 
\end{equation}  

Choose $\gamma=\gamma(\varrho)$ as guaranteed by Lemma \ref{smoothing}. Then using properties of the $k$-plane transform \eqref{commute}, 
$\Tnk \derivslx{s}{\Lambda} f= \derivsy{\gamma} \Tnk(  \derivslx{s}{\Lambda}  \derivsx{-\gamma} f)$.  We now apply Lemma \ref{smoothing}, 
\[ \| \Tnk \derivslx{s}{\Lambda}  f \|_{L^{q_0}(\M)} = \| \derivsy{\gamma} \Tnk( \derivslx{s}{\Lambda}  \derivsx{-\gamma} f)\|_{L^{q_0}(\M)}\leq C  \|  \derivslx{s}{\Lambda}  \derivsx{-\gamma} f \|_{X_\varrho}. \]

Applying Lemma \ref{niceD1}, $\|f\|_{X_\rho}= 1$, and that the $X_t$ spaces are nested,
\[  \|  \derivslx{s}{\Lambda}  \derivsx{-\gamma} f \|_{X_\varrho} \leq C  \|  \derivslx{s}{\Lambda}   f \|_{X_\varrho}^{1-\gamma/s} \| f\|_{X_{\varrho}}^{\gamma/s} \leq C  \|  \derivslx{s}{\Lambda}   f \|_{X_\varrho}^{1-\gamma/s}. \]
Combining this estimate and \eqref{bddbylq}, for $C$ independent of $\Lambda$,
\[  \|  \derivslx{s}{\Lambda}  f \|_{X_\varrho} \leq C  \|  \derivslx{s}{\Lambda}  f \|_{X_\varrho}^{1-\gamma/s}.\] 
If $ \|  \derivslx{s}{\Lambda}  f \|_{X_\varrho}$, is finite, we may then divide through and conclude uniform bounds. This finitenss is a consequence of Lemma \ref{niceMD}, which says that $\|  \derivslx{s}{\Lambda} f \|_{X_\varrho}\leq C_{\Lambda} \|f \|_{X_\varrho}$. Therefore, 
\[ \|  \derivslx{s}{\Lambda}  f \|_{X_\varrho}\leq C \] 

where $C$ is independent of $\Lambda$. 
\end{proof}

\appendix

\section{Existence of extremizers when $q$ is an integer}

Existence of extremizers for $q=n+1$ is known, Drouot \cite{D11}, using work of Lieb \cite{HLS}. Our proof follows these methods. The idea is simple: show some extremizing sequence converges. In particular, we show that any radial and symmetrically decreasing extremizing sequence is pre-compact, which is enough as such extremizing sequences exist when $q$ is an integer.

The key tool in reducing to the radial semetrically decreasing case is the following rearrangement theorem.  Christ proves the result for $q=n+1$ in \cite{C84}, in a multilinear form from which the inequality for smaller $q$ can be derived.  Another proof is given in \cite{BL97}.   It is an open question whether the rearrangement inequality holds in the non-integer case. If it does, extremizers for \eqref{LpLq} exist for all $q\in(1,n+1]$ using this same proof. 

\begin{lem}[$k$-plane rearrangement, \cite{C84}]\label{kprearrangement} When $q\in[1,n+1]$ is an integer, for all $f\in L^p(\R^n)$, 

\begin{equation*}
		\|\Tnk f \|_{L^q} \leq \| \Tnk f^* \|_{L^q}. 
	\end{equation*}
	
\end{lem}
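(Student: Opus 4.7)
The plan is to exploit that $q$ is a positive integer to rewrite $\|\Tnk f\|_{L^q(\M)}^q$ as a genuine $q$-linear integral in the parameters $(\theta, x_1,\dots,x_q, y)$, and then apply a multidimensional Brascamp--Lieb--Luttinger (BLL) rearrangement inequality fiber-by-fiber over $\theta$.

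By the pointwise bound $|\Tnk f| \leq \Tnk|f|$ and the identity $|f|^* = f^*$, I may assume $f \geq 0$. Using $q \in \N$, expand
\[ \|\Tnk f\|_{L^q(\M)}^q = \int_{\G} I_\theta(f) \, d\theta, \]
where
\[ I_\theta(f) := \int_{\theta^\perp}\int_{\theta^q} \prod_{j=1}^q f(x_j + y)\, d\lambda_\theta(x_1)\cdots d\lambda_\theta(x_q)\, d\lambda_{\theta^\perp}(y). \]
For each fixed $\theta \in \G$, the integral $I_\theta(f)$ has the form $\int_{\R^N} \prod_{j=1}^q f(L_j \mathbf{v})\, d\mathbf{v}$ with $N = kq + (n-k)$, where the linear maps $L_j : \R^N \to \R^n$ defined by $L_j(x_1,\dots,x_q,y) = x_j + y$ are surjective. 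This is precisely the setting of the multidimensional BLL rearrangement inequality (an extension of the one-dimensional Brascamp--Lieb--Luttinger inequality to $\R^n$-valued integrand functions, obtained via iterated Steiner symmetrization), which yields
\[ I_\theta(f) \leq I_\theta(f^*) \qquad \text{for every } \theta \in \G. \]
Since $f^*$ is radial, the value $I_\theta(f^*)$ is independent of $\theta$ and equals $\int_{\theta^\perp} (\Tnk f^*(\theta,y))^q\, d\lambda_{\theta^\perp}(y)$. Integrating in $\theta$ then produces the desired inequality.

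The main obstacle is justifying the fiberwise $n$-dimensional BLL rearrangement step, since the original Brascamp--Lieb--Luttinger inequality is stated for one-dimensional integrand functions. I would reduce it to iterated one-dimensional Steiner symmetrization of $f$: for a direction $e \in \Sp^{n-1}$, foliate the parameter space $\R^{N}$ by affine lines along which each $L_j \mathbf{v}$ moves affinely in the $e$-direction, Fubini-slice the integral into one-dimensional sub-integrals, and apply the classical one-dimensional BLL on each slice to deduce $I_\theta(f) \leq I_\theta(f^{St_e})$, where $f^{St_e}$ denotes the $e$-Steiner symmetrization of $f$. Iterating this over a countable dense sequence of directions and invoking the classical theorem that iterated Steiner symmetrizations converge (in $L^p$) to the symmetric decreasing rearrangement, together with the continuity of $I_\theta$ on $L^p$ coming from the $L^p$--$L^q$ boundedness of $\Tnk$, concludes the proof. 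The other subtlety is a routine monotone/dominated-convergence justification to interchange the $\theta$-integration with the limit of the Steiner iteration.
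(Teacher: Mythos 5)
The paper does not give a proof of this lemma at all; it is quoted from the literature and attributed to Christ~\cite{C84} (for $q=n+1$, from a multilinear statement that then yields smaller integer $q$) and to Baernstein--Loss~\cite{BL97}. So your proposal should be compared against those, not against something in the paper.

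Your plan has a fatal flaw at the fiber-over-$\theta$ step. You expand
$\|\Tnk f\|_{q}^{q}=\int_{\G} I_\theta(f)\,d\theta$ and try to prove $I_\theta(f)\le I_\theta(f^*)$ for each fixed $\theta$. But this pointwise-in-$\theta$ inequality is simply false, and hence so is the intermediate Steiner step $I_\theta(f)\le I_\theta(f^{St_e})$. For fixed $\theta$ one has $I_\theta(f)=\|g\|_{L^q(\theta^\perp)}^q$ where $g(y)=\int_\theta f(x+y)\,d\lambda_\theta(x)$ is the $\theta$-marginal of $f$, and marginals of the symmetric decreasing rearrangement are typically \emph{flatter}, not more peaked. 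Concretely, take $n=2$, $k=1$, $\theta=\R e_1$, $q=2$, and let $f=\one_P$ with $P=\{(x,y):0\le y\le 1/\sqrt 2,\ y\le x\le y+M\sqrt2\}$, a thin tilted strip of area $M$. Then $g(y)=M\sqrt2\,\one_{[0,1/\sqrt2]}(y)$, so $I_\theta(f)=\|g\|_2^2=\sqrt2\,M^2$. On the other hand, $f^*$ is the indicator of a ball of area $M$, whose $\theta$-marginal has $L^2$-norm squared of order $M^{3/2}$. For $M$ large, $I_\theta(f)\gg I_\theta(f^*)$. The same example, with $e=(e_1+e_2)/\sqrt2$, shows $I_\theta(f^{St_e})\approx\sqrt2\,M<\sqrt2\,M^2=I_\theta(f)$, so even a single Steiner step can decrease $I_\theta$. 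The inequality you need only holds after integrating in $\theta$; the deficit in one fiber must be compensated by a surplus in other fibers, and your argument provides no mechanism for that.

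There is a second, related misidentification: $I_\theta(f)$ is \emph{not} in the scope of the Brascamp--Lieb--Luttinger theorem. BLL covers integrals $\int_{(\R^n)^k}\prod_j f_j\bigl(\sum_i a_{ji}x_i\bigr)\,dx$ where each $x_i$ ranges over all of $\R^n$ and the $a_{ji}$ are scalars; the proof works because Steiner-slicing in a direction $e$ factors cleanly and produces genuinely \emph{linear} one-dimensional slice maps $\ell_j(t)=\sum_i a_{ji}t_i$. In your integral the parameters $x_j\in\theta$ and $y\in\theta^\perp$ live in proper subspaces of $\R^n$, and for a generic Steiner direction $e$ the unique one-parameter family of $\R^N$ along which all $L_j\mathbf v$ move parallel to $e$ produces slice maps that are \emph{affine} (of the form $\tau\mapsto\sigma_j+\tau$ with $j$-dependent shifts). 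The shifted one-dimensional inequality $\int\prod_j F_j(\sigma_j+\tau)\,d\tau\le\int\prod_j F_j^*(\sigma_j+\tau)\,d\tau$ is false (take $F_1=\one_{[0,1]}$, $F_2=\one_{[5,6]}$, $\sigma_1=0$, $\sigma_2=5$). So the ``multidimensional BLL via iterated Steiner symmetrization'' you invoke does not exist in this setting. The proofs that actually work --- Christ's multilinear approach and Baernstein--Loss's polarization argument --- integrate over $\theta$ before (or instead of) symmetrizing, producing a genuine configuration-space multilinear functional with a geometrically determined kernel to which a Riesz-type rearrangement can be applied, or avoid full symmetrization entirely by two-point symmetrization; both are substantially different from a fiberwise-$\theta$ BLL reduction.
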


As $\|f\|_{L^p}= \|f^*\|_{L^p}$,  Lemma \ref{kprearrangement} gives that if $\{f_n\}$ is any sequence along which \eqref{func} converges to its supremum then $\{f_n^*\}$ is as well. Therefore to find sharp constant $A_0$ in \eqref{LpLq} it's is enough to consider radial functions, i.e.
\begin{equation}\label{radsup} 
A_0 = \sup_{\{f: \|f\|_{L^p}=1,f=f^*\}}\frac{\| \Tnk f \|_{L^q} }{\|f\|_{L^p}}.
\end{equation}

With this notation our main result is:

\begin{thm}\label{thm:integerexistence} Let $q\in(1,n+1]$ be an integer. Let $f_j$ be an extremizing sequence for \eqref{radsup}.  Then there exist a subsequence, which we will continue to denote $f_j$, a  sequence $\sigma_j\in(0,\infty)$, and a non-zero function $f\in \lp$ such that the new extremizing sequence $|\sigma_j|^{-n/p}f_j(\sigma_j^{-1} x)\to f$ in $L^p(\R^n)$. In particular, the supremum in \eqref{radsup} is attained. \end{thm}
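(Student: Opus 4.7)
The plan is a concentration-compactness argument in the spirit of Lieb's treatment of the sharp Hardy--Littlewood--Sobolev inequality \cite{HLS} and Drouot's treatment of the endpoint case \cite{D11}. The scaling $f(x)\mapsto\sigma^{-n/p}f(\sigma^{-1}x)$ preserves both $\|f\|_\lp$ and the ratio in \eqref{radsup}, and Lemma \ref{kprearrangement} (which is where the integrality of $q$ enters) reduces us to radial, radially decreasing, $L^p$-normalized sequences. The sole obstruction to compactness is therefore the scaling freedom, and the role of $\sigma_j$ is to eliminate it.

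To fix the scale I would choose $\sigma_j>0$ so that, after replacing $f_j$ by $|\sigma_j|^{-n/p}f_j(\sigma_j^{-1}\,\cdot\,)$, the distribution function satisfies $|\{x:f_j(x)>1\}|=1$. Since the rescaled $f_j$ is still radial, decreasing and $\lp$-normalized, this pins down the radial profile at a single definite radius $R=\omega_n^{-1/n}$, where $\omega_n=|B(0,1)|$. Radial decrease together with $\|f_j\|_\lp=1$ also yields the uniform pointwise bound $f_j(x)\le C|x|^{-n/p}$. Helly's selection theorem applied to the monotone radial profiles produces a subsequence converging pointwise almost everywhere to a decreasing radial $f$, and the super-level-set normalization forces $f\ge 1$ on a set of positive measure, so $f\not\equiv 0$; Fatou gives $\|f\|_\lp\le 1$.

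The next step is Brezis--Lieb on both sides of the inequality. For a.e.\ $(\theta,y)\in\M$ with $y\ne 0$, the bound $f_j(x+y)\le C|x+y|^{-n/p}$ is integrable over the $k$-plane $\theta+y$---here one uses $p_0<n/k$, which holds because $k<n$ and $q_0>1$---so dominated convergence on each $k$-plane gives $\Tnk f_j\to\Tnk f$ almost everywhere on $\M$. The Brezis--Lieb lemma then yields
\begin{align*}
\|f_j\|_\lp^{p}&=\|f\|_\lp^{p}+\|f_j-f\|_\lp^{p}+o(1),\\
\|\Tnk f_j\|_\lqm^{q}&=\|\Tnk f\|_\lqm^{q}+\|\Tnk(f_j-f)\|_\lqm^{q}+o(1).
\end{align*}
Writing $\alpha=\|f\|_\lp^{p}\in(0,1]$, $\beta=1-\alpha$, and letting $A$ denote the supremum in \eqref{radsup}, the operator bound $\|\Tnk g\|_\lqm\le A\|g\|_\lp$ combined with $\|\Tnk f_j\|_\lqm\to A$ yields $\alpha^{q/p}+\beta^{q/p}\ge 1$. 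Since $q/p=1+k(q-1)/n>1$, the map $t\mapsto t^{q/p}$ satisfies $t^{q/p}\le t$ on $[0,1]$ with equality only at $t\in\{0,1\}$, so combined with $\alpha+\beta=1$ this forces $(\alpha,\beta)=(1,0)$. Hence $f_j\to f$ in $\lp$, and $f$ itself attains the supremum.

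The main obstacle I anticipate is the non-vanishing step: radial decreasing $\lp$-normalized extremizing sequences can either concentrate at the origin or spread to infinity, and one must select $\sigma_j$ so that a subsequence avoids both. Fixing the measure of a super-level set is the standard device, and it works cleanly because the pointwise bound $|x|^{-n/p}$ forces every such super-level set of a radial decreasing $\lp$-normalized function to have finite measure, which can be rescaled to equal any positive constant. Once non-vanishing is secured, the remainder is a routine application of Brezis--Lieb and the strict convexity of $t\mapsto t^{q/p}$.
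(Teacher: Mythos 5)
Your overall strategy (reduce to radial decreasing profiles, fix the dilation gauge, Helly, establish non-vanishing, then close with a Brezis--Lieb/strict-convexity argument using $q/p>1$) is a legitimate and genuinely different route from the paper's, which instead applies Lieb's Lemma 2.7 after securing pointwise convergence of both $f_j$ and $\Tnk f_j$. Your concluding argument is sound: the Brezis--Lieb splitting plus $\alpha^{q/p}+\beta^{q/p}\ge 1$, $\alpha+\beta=1$, $q/p>1$, $\alpha>0$ forces $\alpha=1$, and the dominated-convergence step on $k$-planes works because $p_0<n/k$ (equivalently $k<n$). That part buys you a slightly more self-contained finish than invoking Lieb's lemma.

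However, there is a genuine gap in the normalization step, and it is exactly where the paper's Lemma \ref{gooddilations} does real work. You propose choosing $\sigma_j$ so that $|\{x:f_j(x)>1\}|=1$ after rescaling, and you justify this by saying the pointwise bound $f_j(x)\le C|x|^{-n/p}$ makes the super-level set finite and hence rescalable to any positive constant. This is not correct. The dilation-invariant quantity here is $m_j(\sigma)=\sigma^n\,|\{f_j>\sigma^{n/p}\}|$, and by Chebyshev $m_j(\sigma)\le \|f_j\|_p^p=1$ for every $\sigma$, with equality only in degenerate cases; in general $\sup_\sigma m_j(\sigma)$ is strictly less than $1$ and can be made arbitrarily small (take $f_j$ a normalized near-singular profile $\approx |x|^{-a}\mathbbm{1}_{|x|<1}$ with $a\uparrow n/p$; then $\sup_\sigma m_j(\sigma)=1-ap/n\to 0$). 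So the prescribed gauge is generally unattainable, and more importantly there is no a priori lower bound on $\sup_\sigma m_j(\sigma)$ for an arbitrary normalized radial decreasing sequence. What forces a uniform lower bound is that $f_j$ is extremizing: the paper shows $\|\Tnk f_j\|_{q}\gtrsim 1$ together with the Lorentz refinement $\|\Tnk f\|_{L^q}\lesssim\|f\|_{L^{p,q}}$ (Lemma \ref{lorentz}) and the elementary $\|f\|_{L^{p,q}}^q\le\|f\|_{L^{p,\infty}}^{q-p}\|f\|_p^p$ give a positive lower bound on $\|f_j\|_{L^{p,\infty}}$, i.e.\ on $\sup_\sigma m_j(\sigma)$, which in turn yields a fixed ball $\mathcal B$ with $\sigma_j^{-n/p}f_j(\sigma_j^{-1}\cdot)\ge \mathbbm{1}_{\mathcal B}$. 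Without some version of this Lorentz-based input, the non-vanishing step ($\alpha>0$) is unjustified, and your Brezis--Lieb argument cannot start. Once you replace the super-level-set gauge with the paper's Lorentz argument, the remainder of your proof goes through.
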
 

This result was proved by Drouot in \cite{D11} for the $q=n+1$ case. A more general version of this result has been proved by Christ in \cite{C11} in the case that $q=n+1$ and $k=n-1$.  % {\color{red} PRESUMABLY WE DROP THE $f=f^*$ restriction. }

Restricting to sequences of radial, symmetricly decreasing functions allows us to extract a sequence which converges pointwise (except perhaps at zero) by Helly's selection principle. 

\begin{lem}[Helly's selection principle (see for instance \cite{LLtext})]\label{Helly} 
For any sequence, $\{f_j\}$, of functions $[0,\infty)\to[0,\infty)$ (resp. $\R^n\to[0,\infty)$) which are decreasing (resp. radial symmetric decreasing), and for which there exists a finite positive constant $B$, such that $\|f_j\|_{L^p}\leq B$ for all $j$, there exists a function $f\in L^p$ and a subsequence, which we will continue to denote $f_j$, such that $f_j\to f$ pointwise except perhaps at the origin.
\end{lem}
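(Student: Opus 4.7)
The plan is the classical Helly diagonal-extraction argument, with the $L^p$ bound used to produce a uniform pointwise bound away from the origin, which in turn permits the extraction of a subsequence converging on a countable dense set.

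I will carry out the radial case; the one-variable case is identical (and easier). Writing $f_j(x)=F_j(|x|)$ with $F_j\colon(0,\infty)\to[0,\infty)$ decreasing, the hypothesis $\|f_j\|_{L^p}\le B$ becomes
\[
\omega_{n-1}\int_0^\infty F_j(r)^p\,r^{n-1}\,dr\le B^p.
\]
Since $F_j$ is decreasing, for every $r>0$,
\[
F_j(r)^p\cdot\frac{\omega_{n-1}r^n}{n}\le \omega_{n-1}\int_0^r F_j(s)^p s^{n-1}\,ds\le B^p,
\]
so $F_j(r)\le C_n B\,r^{-n/p}$ uniformly in $j$. In particular, for each positive rational $q$ the sequence $\{F_j(q)\}_j$ is bounded.

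Next I enumerate $\Q\cap(0,\infty)=\{q_1,q_2,\dots\}$ and perform a standard diagonal extraction: extract a subsequence along which $F_j(q_1)$ converges, then a further subsequence converging at $q_2$, and so on, then diagonalize. Relabelling, I obtain a subsequence (still denoted $F_j$) such that $F_j(q)\to \Phi(q)\in[0,\infty)$ for every positive rational $q$, with $\Phi$ decreasing on $\Q\cap(0,\infty)$. Define $F(x):=\inf\{\Phi(q):q\in\Q,\,q<x\}$ for $x>0$; this is decreasing and right-continuous. At every continuity point $x$ of $F$, sandwiching via
\[
F_j(q')\le F_j(x)\le F_j(q)\qquad (q<x<q',\ q,q'\in\Q)
\]
and passing to the limit gives $F_j(x)\to F(x)$. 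The discontinuity set of the monotone function $F$ is at most countable; a second diagonal extraction against this countable set, using the uniform bound $F_j(x)\le C_n B\,x^{-n/p}$ to guarantee convergent subsequences at each discontinuity, produces a further subsequence and a function (again called $F$, redefined at those countably many points to match the new limits) such that $F_j(x)\to F(x)$ for every $x>0$.

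Finally, set $f(x):=F(|x|)$ for $x\ne 0$. Then $f_j\to f$ pointwise on $\R^n\setminus\{0\}$, $f$ is radial and symmetric decreasing, and by Fatou's lemma $\|f\|_{L^p(\R^n)}\le\liminf_j\|f_j\|_{L^p(\R^n)}\le B$, so $f\in L^p(\R^n)$. The only nonroutine bookkeeping is the second diagonal extraction handling the (at most countable) discontinuities of $F$; the uniform pointwise bound away from $0$ is what makes both extractions possible and is the only place the $L^p$-hypothesis is genuinely used.
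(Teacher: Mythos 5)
The paper does not supply a proof of this lemma; it is quoted as a known result with a reference to the textbook of Lieb and Loss. Your argument is a correct and self-contained direct proof by the classical diagonal-extraction method: the $L^p$ bound together with monotonicity yields the uniform pointwise bound $F_j(r)\le C_nB\,r^{-n/p}$ away from the origin, which makes the sequence $\{F_j(q)\}_j$ bounded for each positive rational $q$, permits the first diagonalization, and then (since the resulting monotone limit has at most countably many discontinuities) permits a second diagonalization over that countable set; Fatou's lemma then gives $f\in L^p$. One small slip: the function $F(x)=\inf\{\Phi(q):q\in\Q,\,q<x\}$ is \emph{left}-continuous, not right-continuous as written (as $q\uparrow x$ one obtains the left limit of the decreasing function $\Phi$). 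This is immaterial to the argument, since the only property used is that the discontinuity set of a monotone function is at most countable, and the second extraction handles those points anyway. No gap.
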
 

Given pointwise convergence, we now wish to apply a powerful theorem of Lieb's from \cite{HLS}. 

\begin{lem}[Lieb \cite{HLS} Lemma 2.7]\label{pointwiseisenough} Let $(M,\Sigma,\mu)$ and $(M',\Sigma',\mu')$ be measure spaces and let $X$ (resp. $Y$) be $L^p(M,\Sigma,\mu)$ (resp. $L^q(M',\Sigma',\mu')$) with $1\leq p\leq q<\infty$. Let $A$ be a bounded linear operator from $X$ to $Y$. For $f\in X$, $f\neq 0$, let 
\[ R(f)= \|Af\|_Y/\|f\|_X \text{  and   } N=sup\{R(f)|f\neq 0\}\] 
Let $\{f_j\}$ be a uniformly norm-bounded maximizing sequence for $N$ and suppose that $f_j\to  f\neq0$ and that $Af_j\to Af$ pointwise almost everywhere. Then $f$ maximizes $R$, ie, $R(f)=N$. Moreover, if $p<q$ and if $lim\|f_j\|_X=C$ exists, then $\|f\|_X=C$ and hence $\|Af_j\|_Y\to\|Af\|_Y$. 
\end{lem}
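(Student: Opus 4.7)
The plan is to promote the pointwise a.e.\ convergence to an exact splitting of the relevant norms via the Brezis--Lieb lemma, and then exploit the superadditivity $(a+b)^{q/p} \geq a^{q/p} + b^{q/p}$, valid because $q \geq p$, to force the maximizing sequence to concentrate on $f$. The main technical input is the Brezis--Lieb refinement of Fatou's lemma; the remainder is a manipulation of a single elementary inequality.

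First, pass to a subsequence along which $\|f_j\|_X \to c$ and $\|Af_j\|_Y \to d$. The maximizing hypothesis gives $d = Nc$, and Fatou together with $f \neq 0$ yields $c \geq \|f\|_X > 0$. Since $f_j \to f$ and $Af_j \to Af$ pointwise a.e.\ with both sequences uniformly bounded in $L^p$ and $L^q$ respectively (the latter by boundedness of $A$), the Brezis--Lieb lemma applied to each gives
$$ \|f_j - f\|_X^p \to c^p - \|f\|_X^p, \qquad \|A(f_j - f)\|_Y^q \to d^q - \|Af\|_Y^q. $$
Set $a = \|f\|_X^p$, $b = c^p - a \geq 0$, $\alpha = \|Af\|_Y^q$, and $\beta = d^q - \alpha \geq 0$.

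Applying the definition of $N$ to $f$ and to $f_j - f$ (and passing to the limit on the second) yields $\alpha \leq N^q a^{q/p}$ and $\beta \leq N^q b^{q/p}$, while the maximizing property forces $\alpha + \beta = d^q = N^q c^q = N^q (a+b)^{q/p}$. Chaining these with superadditivity produces
$$ N^q a^{q/p} + N^q b^{q/p} \;\geq\; \alpha + \beta \;=\; N^q (a+b)^{q/p} \;\geq\; N^q a^{q/p} + N^q b^{q/p}, $$
so equality holds throughout. In particular $\alpha = N^q a^{q/p}$, i.e.\ $\|Af\|_Y = N \|f\|_X$, proving $f$ is an extremizer.

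For the moreover claim, when $p < q$ the map $t \mapsto t^{q/p}$ is strictly convex on $[0,\infty)$, so $(a+b)^{q/p} > a^{q/p} + b^{q/p}$ unless $ab = 0$. Since $a > 0$ by $f \neq 0$, we must have $b = 0$, which gives $\|f_j\|_X \to \|f\|_X$ and hence $\|Af_j\|_Y \to N\|f\|_X = \|Af\|_Y$. The one step requiring care is the dual application of Brezis--Lieb to $\{Af_j\}$, which is legitimate because pointwise a.e.\ convergence of $Af_j \to Af$ is assumed and uniform $L^q$-boundedness is inherited from the operator bound on $A$; the rest is bookkeeping around the sharp superadditivity inequality.
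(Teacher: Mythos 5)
Your proof is correct. Note that the paper itself does not supply a proof of this lemma; it is quoted verbatim and attributed to Lieb \cite{HLS}. Your argument -- apply the Brezis--Lieb lemma to $\{f_j\}$ in $L^p$ and to $\{Af_j\}$ in $L^q$, then close the chain of inequalities using the superadditivity of $t \mapsto t^{q/p}$ (strict when $q>p$) -- is precisely Lieb's original proof, so there is nothing in the paper to contrast it against. The only small point worth flagging is that the ``moreover'' conclusion should be stated for the full sequence: once you have $b=0$ on the chosen subsequence you get $c = \|f\|_X$, and since $\lim_j\|f_j\|_X = C$ is assumed to exist along the whole sequence, $C = \|f\|_X$; then $\|Af_j\|_Y = R(f_j)\|f_j\|_X \to N\|f\|_X = \|Af\|_Y$ along the full sequence, with no residual dependence on the initial passage to a subsequence.
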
 

Thus we are required to prove that  there exists a sequence $\sigma_j\in(0,\infty)$ such that the new extremizing sequence $|\sigma_j|^{-n/p}f_j(\sigma_j^{-1} x)$ converges pointwise to a nonzero function and that $\Tnk f_j\to \Tnk f$ pointwise. 

\begin{lem}\label{gooddilations}  Let $f_j$ be an extremizing sequence for \eqref{radsup} such that for each $j$, $\|\Tnk f_j\|_{L^q}\geq A_0/2$. Then there exists a sequence $\sigma_j\in(0,\infty)$ and a ball $\mathcal{B}$ such that for each $j$, $|\sigma_j|^{-n/p}f_j(\sigma_j^{-1} x)\geq \one_{\mathcal{B}}(x)$. 
\end{lem}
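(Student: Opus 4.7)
The plan is to reduce the lemma to establishing a uniform lower bound on
\[ M_j := \sup_{r > 0} r^{n/p} f_j(r), \]
which, for a radial symmetric decreasing $f_j$, is comparable to its Lorentz $L^{p,\infty}$ quasinorm. Since the sequence is normalized by \eqref{radsup}, $\|f_j\|_{L^p(\R^n)} = 1$. Once $M_j \geq c_0$ is established for a constant $c_0 > 0$ independent of $j$, I choose $r_j > 0$ with $r_j^{n/p} f_j(r_j) \geq c_0/2$, set $R = (c_0/2)^{p/n}$, $\mathcal{B} = \{x \in \R^n : |x| \leq R\}$, and $\sigma_j = R/r_j$. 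For $|x| \leq R$, radial symmetric decrease of $f_j$ yields
\[ \sigma_j^{-n/p} f_j(\sigma_j^{-1} x) \geq \sigma_j^{-n/p} f_j(\sigma_j^{-1} R) = (r_j/R)^{n/p} f_j(r_j) \geq (r_j/R)^{n/p} (c_0/2) r_j^{-n/p} = 1, \]
giving the conclusion.

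The main obstacle is producing the uniform lower bound on $M_j$. For this I aim to prove the following interpolation-type inequality, valid for all radial symmetric decreasing $f \in L^p(\R^n)$:
\[ \|T_{n,k} f\|_{L^q(\M)} \leq C \|f\|_{L^p(\R^n)}^{p/q} \left( \sup_{r > 0} r^{n/p} f(r) \right)^{(q-p)/q}. \]
The exponent $(q-p)/q$ is positive because $q > p$ (which follows from $q > 1$), so coupled with $\|T_{n,k} f_j\|_{L^q} \geq A_0/2$ and $\|f_j\|_{L^p} = 1$ this yields $M_j \geq (A_0/(2C))^{q/(q-p)} =: c_0 > 0$.

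To prove the inequality, use that when $f$ is radial, $T_{n,k} f(\theta,y) = F(|y|)$ depends only on the distance $|y|$ from the origin to the translated $k$-plane, and
\[ F(s) = c_k \int_s^\infty f(u) u (u^2 - s^2)^{(k-2)/2} du. \]
Set $g(r) := r^{n/p} f(r)$, so that $\|g\|_{L^\infty}$ is the quantity we wish to bound $\|T_{n,k} f\|_q$ in terms of, and $\int_0^\infty g(r)^p \, dr/r = c_n^{-1} \|f\|_{L^p}^p$. The substitution $u = sv$ yields
\[ F(s) = c_k\, s^{-(n-k)/q}\, G(s), \quad G(s) = \int_1^\infty g(sv) w_k(v) \, dv, \quad w_k(v) = v^{1-n/p}(v^2 - 1)^{(k-2)/2}, \]
where $w_k \in L^1([1,\infty))$ since $0 < k < n/p$ (the endpoints $v = 1$ and $v = \infty$ are integrable precisely because $k > 0$ and $k < n/p$, respectively). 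Therefore $\|T_{n,k} f\|_{L^q}^q = c_{n-k} \int_0^\infty F(s)^q s^{n-k-1} ds = C \int_0^\infty G(s)^q \, ds/s$, a scale-invariant quantity. Applying Jensen's inequality with the probability measure $w_k(v)\,dv/\|w_k\|_1$ on $[1,\infty)$ and the convex function $t \mapsto t^q$, followed by Fubini's theorem and the scale-invariant substitution $t = sv$, one obtains
\[ \int_0^\infty G(s)^q \frac{ds}{s} \leq C \int_0^\infty g(t)^q \frac{dt}{t}. \]
The desired inequality now follows from $\int_0^\infty g(t)^q \, dt/t \leq \|g\|_\infty^{q-p} \int_0^\infty g(t)^p \, dt/t \leq C \|g\|_\infty^{q-p} \|f\|_{L^p}^p$, which uses only $q > p$ and the pointwise bound $g \leq \|g\|_\infty$.
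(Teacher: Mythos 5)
Your proof is correct, but it takes a genuinely different route from the paper's argument, although both ultimately reduce to the same uniform lower bound on $\|f_j\|_{L^{p,\infty}}$ (your quantity $M_j=\sup_r r^{n/p}f_j(r)$ is exactly $\|f_j\|_{L^{p,\infty}}$ up to a dimensional constant for radial decreasing $f_j$). The paper establishes the key estimate $\|T_{n,k} f\|_{L^q} \lesssim \|f\|_{L^{p,\infty}}^{(q-p)/q}\|f\|_{L^p}^{p/q}$ through Lorentz space theory: it first derives $\|T_{n,k}f\|_{L^q}\lesssim\|f\|_{L^{p,q}}$ by off-diagonal Marcinkiewicz interpolation between Christ's Lorentz endpoint estimate at $q=n+1$ and the trivial $L^1\to L^1$ bound, and then invokes the elementary interpolation inequality $\|f\|_{L^{p,q}}^q\leq \|f\|_{L^{p,\infty}}^{q-p}\|f\|_{L^p}^p$. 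You instead exploit the hypothesis that the $f_j$ are radial symmetric decreasing: you write out the explicit Abel-type kernel for $T_{n,k}$ on radial functions, pass to the scale-invariant variable $g(r)=r^{n/p}f(r)$, and observe that the resulting one-dimensional operator is an averaging on the multiplicative group $(0,\infty)$ against a finite measure (your integrability check $0<k<n/p$ is correct, the latter because $n/p - k = (n-k)/q>0$), whereupon Jensen and Fubini give the estimate. Your argument is more elementary and self-contained, bypassing the Lorentz interpolation machinery entirely, but it hinges essentially on the radial decreasing structure, which is available here only because the problem was already reduced to such functions via the $k$-plane rearrangement inequality; the paper's intermediate Lorentz estimate holds for all $f\in L^p$ and does not need this reduction. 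The final rescaling step (choosing $r_j$, $R=(c_0/2)^{p/n}$, $\sigma_j=R/r_j$) is correct and matches the paper's choice of dilation, phrased directly in terms of the near-maximizing radius rather than a threshold level $t_j$.
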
 

This follows from the known Lorentz norm estimates for the $k$-plane transform. 

\begin{lem}\label{Clorentz} \cite{C84} There exists a constant $C$ such that for all $f\in L^{\frac{n+1}{k+1},n+1}$, 
\[ \|\Tnk f\|_{L^{n+1}(\M)} \leq C \|f\|_{L^{\frac{n+1}{k+1},n+1}}.\]
\end{lem}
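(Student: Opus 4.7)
The plan is to derive this Lorentz-space endpoint bound as a refinement of the strong-type estimate $\Tnk : L^{(n+1)/(k+1)} \to L^{n+1}$, which is itself the endpoint $q = n+1$ of the family of bounds along the scaling curve $p(q) = nq/(n-k+kq)$, $q \in [1, n+1]$. The inequality is a genuine strengthening, since $L^{(n+1)/(k+1), n+1}$ is strictly larger than $L^{(n+1)/(k+1)}$ whenever $k \geq 1$. The natural tool is real interpolation in the Lorentz-space setting (Hunt--Marcinkiewicz).

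First, I would observe that the endpoint strong-type bound immediately yields the restricted weak-type inequality
\[
\|\Tnk \one_E\|_{L^{n+1,\infty}(\M)} \leq C\, |E|^{(k+1)/(n+1)}
\]
for every measurable $E \subset \R^n$ of finite measure, and similarly at any other point on the scaling curve. Next, I would combine this endpoint restricted weak-type bound with a second such bound at an auxiliary pair $(p_1, q_1)$ with $q_1 < n+1$ along the scaling curve. Applying Hunt's real-interpolation theorem between the two restricted weak-type estimates then delivers, for suitable intermediate parameters, strong-type bounds of the form $\Tnk : L^{p, r} \to L^{q, r}$. Extracting the specific Lorentz endpoint $\Tnk : L^{(n+1)/(k+1), n+1} \to L^{n+1, n+1} = L^{n+1}$ is the heart of the argument.

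The main obstacle is that the target pair $(p_0, q_0) = ((n+1)/(k+1), n+1)$ sits at the extremum of the scaling curve rather than at an interior point of a segment of restricted weak-type estimates, so standard interpolation along the curve does not automatically produce the Lorentz endpoint. Christ's argument in \cite{C84} circumvents this by establishing a stronger underlying multilinear (nested $k$-plane) inequality, an affine-invariant Loomis--Whitney type bound, from which both the strong-type endpoint and its Lorentz-space refinement follow by symmetrization and elementary bookkeeping. For the purposes of the appendix the lemma is invoked directly from \cite{C84}.
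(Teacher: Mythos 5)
Your proposal ultimately lands on the same foundation as the paper: Lemma~\ref{Clorentz} is invoked directly as a result of \cite{C84}, not re-proved. Your preliminary discussion correctly diagnoses why a cheap derivation via real interpolation cannot work --- the pair $\bigl((n+1)/(k+1),\,n+1\bigr)$ sits at the extremum of the scaling curve, so Hunt--Marcinkiewicz interpolation between restricted weak-type bounds along the curve only produces Lorentz improvements at interior points, never at this endpoint --- and you rightly attribute the genuine content to Christ's multilinear/Loomis--Whitney argument in \cite{C84}. Since the paper treats the lemma purely as a citation, your added context is accurate but unnecessary for the appendix.
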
 
By interpolation, this yields 
\begin{lem}\label{lorentz}  For $q\in[1,n+1]$ and $p=\frac{nq}{n-k+kq}$, there exists a constant $C$ such that for all $f\in L^{p,q}$, 
\[ \|\Tnk f\|_{L^{q}(\M)} \leq C \|f\|_{L^{p,q}}.\]
\end{lem}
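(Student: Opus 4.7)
The plan is to deduce Lemma \ref{lorentz} from Lemma \ref{Clorentz} by real interpolation between two endpoints. The endpoint at $q=n+1$ is Christ's Lorentz-space estimate \ref{Clorentz}, $\Tnk: L^{(n+1)/(k+1), n+1} \to L^{n+1}$. The other endpoint I need is at $q=1$: here $p = \tfrac{n\cdot 1}{n-k+k} = 1$, and the bound $\Tnk: L^1(\R^n) \to L^1(\M)$ is elementary, following from Fubini-Tonelli together with the fact that the measure on $\M$ was defined to be the natural product measure (so the $L^1$ norm of $\Tnk f$ is exactly $\|f\|_{L^1}$ up to the normalization constant for $d\theta$).

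Next I would check that, for $q \in (1, n+1)$ and $p=\tfrac{nq}{n-k+kq}$, the pair $(1/p,1/q)$ lies on the line segment joining $(1,1)$ and $\bigl(\tfrac{k+1}{n+1}, \tfrac{1}{n+1}\bigr)$. Writing $1/q = (1-\theta)\cdot 1 + \theta\cdot\tfrac{1}{n+1}$ gives $\theta = \tfrac{(q-1)(n+1)}{qn}$, and a direct computation then yields $(1-\theta) + \theta\cdot\tfrac{k+1}{n+1} = \tfrac{k}{n}+\tfrac{n-k}{nq} = 1/p$, confirming that the exponent relation $p=\tfrac{nq}{n-k+kq}$ is exactly the interpolation line between the two endpoints.

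With both endpoints identified, I would apply the real interpolation functor $(\cdot,\cdot)_{\theta,q}$ to the pair of bounded maps
\[ \Tnk: L^{1,1}(\R^n) \to L^{1,1}(\M), \qquad \Tnk: L^{(n+1)/(k+1),\, n+1}(\R^n) \to L^{n+1,\, n+1}(\M), \]
(recalling $L^{r,r}=L^r$). Using the standard identification $(L^{p_0,r_0}, L^{p_1,r_1})_{\theta,r} = L^{p,r}$ with $1/p=(1-\theta)/p_0+\theta/p_1$, the domain becomes $L^{p,q}(\R^n)$ and the target becomes $L^{q,q}(\M) = L^q(\M)$, giving exactly the claimed estimate. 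The boundary cases $q=1$ and $q=n+1$ hold trivially from the endpoints themselves (for $q=1$ one has $L^{1,1}=L^1$, and for $q=n+1$ the statement is Lemma \ref{Clorentz} verbatim).

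I do not anticipate a serious obstacle; the only subtlety is the bookkeeping of Lorentz indices, specifically choosing the second interpolation parameter equal to $q$ so that the target collapses from a Lorentz space to an ordinary $L^q$. If one preferred a more self-contained route avoiding real interpolation machinery, an alternative would be to write $f$ as a superposition of characteristic functions via its distribution function and estimate each layer with Christ's endpoint, summing in $\ell^q$ along the level sets (this is essentially the content of the real interpolation argument in a more concrete form).
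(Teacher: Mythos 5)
Your proof is correct and takes essentially the same route as the paper: both interpolate between the trivial $L^1\to L^1$ bound at $q=1$ and Christ's Lorentz-space endpoint (Lemma \ref{Clorentz}) at $q=n+1$. The paper phrases the interpolation step via the off-diagonal Marcinkiewicz theorem of Stein--Weiss, while you invoke the real interpolation functor and the identification $(L^{p_0,r_0},L^{p_1,r_1})_{\theta,r}=L^{p,r}$ directly, but these are the same mechanism; your exponent bookkeeping (including the choice $r=q$ to collapse the target Lorentz space to $L^q$) matches the paper's.
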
 
%Note that for $q>1$ as $p= \frac{n}{n+k(q-1)}q< q$, this estimate is stronger then that of \eqref{LpLq}. 
\begin{proof} 
The $q=n+1$ endpoint is exactly Christ's estimate. The $q=1$ endpoint is the inequality $\|\Tnk f\|_{L^{1}(\M)} \leq \|f\|_{L^{1}}$. These estimates certainly imply the corresponding estimates for weak $L^q$.  Thus, using the off-diagonal Marcinkiewicz interpolation theorem \cite{SW59},  for $q\in(1,n+1)$ and $p=\frac{nq}{n-k+kq}$, for all $r\in(0,\infty]$, there exists a constant $C$ such that for all $f\in L^{p,r}$, 
\[ \|\Tnk f\|_{L^{q,r}(\M)} \leq C \|f\|_{L^{p,r}}.\]
Taking $r=q$ gives the theorem. 
\end{proof}

This proof is essentially the same as that given in \cite{D11}.  

\begin{proof}[Proof of Lemma \ref{gooddilations}] 
First note that for $1\leq p<q<\infty$,
\begin{equation}\label{weakbd} 
\|f\|_{L^{p,q}}^q\leq \|f\|_{L^{p,\infty}}^{q-p}\|f\|_p^p
\end{equation} 
which can be seen as follows. 
\begin{align*} \|f\|_{L^{p,q}}^q = &\int_0^\infty t^q \left(\mu\{x:|f(x)|>t\}\right)^{q/p} \;\frac{dt}{t}\\
=& \int_0^\infty \left(t^{p} \mu\{x:|f(x)|>t\}\right)(t^{q-p} \left(\mu\{x:|f(x)|>t\}\right)^{(q-p)/p} 
\frac{dt}{t} \\
\leq& \sup_{t>0}(t^{q-p} \left(\mu\{x:|f(x)|>t\}\right)^{(q-p)/p} \int_0^\infty \left(t^{p} \mu\{x:|f(x)|>t\}\right) 
\frac{dt}{t} \\
\leq&\| f\|_{L^{p,\infty}}^{q-p}\|f\|_{L^p}^p. 
\end{align*}

Therefore as for each $j$, $\|\Tnk f_j\|_{L^q}\geq A_0/2$ and $\|f_j\|_{L^p}=1$, 
\begin{align*} (A_0/2)^q \leq & \|\Tnk f_j\|_{L^q}^q\\
\leq & C\|f_j\|_{L^{p,q}}^q\\
\leq & C\|f_j\|_{L^{p,\infty}}^{q-p}\\
\leq& C \sup_{t>0} (t^{q-p} \left(\mu\{x:|f_j(x)|>t\}\right)^{(q-p)/p}
\end{align*}
Thus, there exists $t_j$ such that 
\[  t_j^{p} \left(\mu\{x:|f_j(x)|>t_j\}\right)\geq \left( \frac{A_0^q}{2^{q+1}C}\right)^{\frac{p}{q-p}}.
\] 
Set $\sigma_j=t_j^{-p/n}$.  Then 
\[ \mu\{x:\sigma_j^{n/p}|f_j(\sigma_j x)|>1\}=%\mu\{x: t_j^{-1}|f_j(t_j^{-p/n} x)|>1\}=
 t_j^{p} \left(\mu\{x:|f_j(x)|>t_j\}\right) .\]
Let $\mathcal{B}$ be the ball centered at origin with volume $ \left( \frac{A_0^q}{2^{q+1}C}\right)^{\frac{p}{q-p}}$. As, $f_j$ is symmetric decreasing, it follow from the above estimates that 
\[ \sigma_j^{n/p}f_j(\sigma_j x)\geq \one_{\mathcal{B}}(x). 
\qedhere \] 
\end{proof} 

\begin{proof}[Proof of Theorem \ref{thm:integerexistence} ]
 Let $f_j$ be an extremizing sequence for \eqref{radsup}.  Thus $\|f_j\|_{L^p}=1$, $f_j$ is radial symmetric decreasing, and there exists a subsequence, which we will continue to denote $f_j$, such that $\|\Tnk f\|_{L^q} \geq A_0/2$. Take $\sigma_j$ as guaranteed by Lemma \ref{gooddilations}.  Applying Helly's selection principle (Lemma \ref{Helly})  to $|\sigma_j|^{-n/p}f_j(\sigma_j^{-1} x)$, there exist a function $f\in L^p$ and a subsequence, which we will continue to denote $f_j$, such that $|\sigma_j|^{-n/p}f_j(\sigma_j^{-1} x)\to f$ pointwise except at the origin. By Lemma \ref{gooddilations} for each $j$,   $|\sigma_j|^{-n/p}f_j(\sigma_j^{-1} x)\geq \one_{\mathcal{B}}$. Thus $f\geq \one_{\mathcal{B}}$, and in particular, $f\neq 0$. 

As $f_j$ is radial symmetric decreasing, $\Tnk f_j(\theta,y)$ is as well. Let $g_j=\Tnk f_j(\theta,y)$. Applying Helly's selection principle there exists $g$ such that some subsequence still denoted $g_j$ satisfies $g_j\to g$ pointwise except at the origin. As $q\in(1,n+1]$, $p\in(1,\infty)$. Also $\|f_j\|_{L^p(\R^n)}=1$, and so by \eqref{LpLq}, $\|g_j\|_{\lqm}=\|\Tnk f_j(\theta,y)\|_{\lqm}\leq A_0$. Thus each of these sequences converges weakly. Finally, $T$ is a bounded linear operator for $L^p$ to $\lqm$ and thus also is a bounded linear operator from $L^p(\R^n)$ to $\lqm$ when each is endowed with the weak topology. Hence $g=\Tnk f$, and the conditions of Lemma \ref{pointwiseisenough} are satisfied. 

\end{proof}

\section{Proof of the fractional derivative inequality for mollified derivatives}\label{sec:kp}
This proof generally follows the argument for standard fractional derivatives given by Christ and Weinstein in \cite{CW91} and in \cite{CX} for mollified derivatives, however we work with a different definition of the mollified derivatives correcting a technical issue in \cite{CX}, where the multipliers have the same asymptotic behavior but are not differentiable. The key ideas are a Littlewood-Paley decomposition and weighted Calderon-Zygmund theory, but we go through the details slowly because independence of the constants on $\Lambda$ is critical for our application. To show that various operators appearing in the course of proof are Calderon-Zygmund operators, we'll need the following estimates: 

\begin{lem} For each multi-index $\alpha$, such that $|\alpha|\leq\frac{n}{2}+1$, the multiplier $m_{\Lambda,s}(\xi)$ given by 
\[   m_{\Lambda,s}(\xi)= \frac{\langle \xi \rangle ^s}{\langle\Lambda^{-1}\xi \rangle ^s}\]
enjoys the decay estimates
\begin{align}\label{eq:LambdaDecay}  \left| \partial^\alpha_\xi \left(\frac{\langle \xi\rangle^s}{\langle\Lambda^{-1} \xi\rangle^s} \right) \right|
&\leq \frac{C_{ \alpha}\Lambda^{s}}{|\xi|^{|\alpha|}}\\
\label{eq:DecayNoLambda}  
 \left| \partial^\alpha_\xi \left(\frac{\langle \xi\rangle^s}{\langle\Lambda^{-1} \xi\rangle^s} \right) \right|
&\leq{C_\alpha}\langle \xi\rangle^{s-|\alpha|}.
\end{align}
\end{lem}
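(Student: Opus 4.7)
The plan is to factor $m_{\Lambda,s}(\xi) = \langle\xi\rangle^s\cdot\langle\Lambda^{-1}\xi\rangle^{-s}$ and apply the Leibniz rule, using the elementary pointwise bounds $|\partial^\beta \langle\xi\rangle^s| \leq C_\beta \langle\xi\rangle^{s-|\beta|}$ and $|\partial^\gamma\langle\Lambda^{-1}\xi\rangle^{-s}| \leq C_\gamma \Lambda^{-|\gamma|}\langle\Lambda^{-1}\xi\rangle^{-s-|\gamma|}$, the second of which comes from the chain rule applied to the dilation $\xi\mapsto\Lambda^{-1}\xi$. This produces the master Leibniz estimate
\[
|\partial^\alpha m_{\Lambda,s}(\xi)| \leq C\sum_{\beta\leq\alpha}\langle\xi\rangle^{s-|\beta|}\,\Lambda^{-|\alpha-\beta|}\,\langle\Lambda^{-1}\xi\rangle^{-s-|\alpha-\beta|},
\]
from which I would derive both inequalities by case analysis on the size of $|\xi|$ relative to $\Lambda$.

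Estimate \eqref{eq:LambdaDecay} has in fact already been proved in display \eqref{Lambdas} inside the proof of Lemma \ref{niceMD}: one pulls the factor $\Lambda^s$ outside, uses that $\partial^\alpha_\xi\left(\frac{1+|\xi|^2}{\Lambda^2+|\xi|^2}\right)^{s/2}$ is controlled by $C\Lambda^{2}(\Lambda^2+|\xi|^2)^{-(|\alpha|+2)/2}$, and then drops $\Lambda^2$ from the denominator to replace $(\Lambda^2+|\xi|^2)^{|\alpha|/2}$ by $|\xi|^{|\alpha|}$. I would simply quote this computation.

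For the new bound \eqref{eq:DecayNoLambda} I would split into the regimes $|\xi|\leq\Lambda$ and $|\xi|>\Lambda$. In the first, $\langle\Lambda^{-1}\xi\rangle\in[1,\sqrt{2}]$, so the last factor in the master estimate is $O(1)$; moreover $\langle\xi\rangle\leq\sqrt{2}\,\Lambda$ gives $\Lambda^{-|\alpha-\beta|}\leq C\langle\xi\rangle^{-|\alpha-\beta|}$, and each summand contributes $\langle\xi\rangle^{s-|\beta|-|\alpha-\beta|}=\langle\xi\rangle^{s-|\alpha|}$. In the second regime $\langle\Lambda^{-1}\xi\rangle\sim|\xi|/\Lambda$ and $\langle\xi\rangle\sim|\xi|$, so direct substitution collapses the $\beta$-th summand to $\Lambda^s|\xi|^{-|\alpha|}$; since $s\geq 0$ and $|\xi|\geq\Lambda$, this is dominated by $|\xi|^{s-|\alpha|}\sim\langle\xi\rangle^{s-|\alpha|}$, completing the proof.

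There is no real obstacle here beyond organizing the case split; the only point requiring discipline is that every constant be independent of $\Lambda$. This is automatic because the partition is purely in terms of $|\xi|$ versus $\Lambda$, and in each regime the bound is homogeneous under the rescaling that defines $m_{\Lambda,s}$, so nothing $\Lambda$-dependent slips into the constants.
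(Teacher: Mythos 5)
Your argument is correct, and it takes a genuinely different route from the paper's. The paper proves both bounds by writing $m_{\Lambda,s}(\xi) = \Lambda^s f(g(|\xi|^2))$ with $f(x)=x^{s/2}$ and $g(x)=\tfrac{1+x}{\Lambda^2+x}$, then applying the Fa\`a di Bruno/chain-rule machinery together with tailored min-type bounds on $f^{(\beta)}(g(|\xi|^2))$ and $g^{(\beta)}(|\xi|^2)$, choosing different factors in the minimum depending on which estimate is wanted. You instead use the product decomposition $m_{\Lambda,s} = \langle\xi\rangle^s\cdot\langle\Lambda^{-1}\xi\rangle^{-s}$, the Leibniz rule, the standard Bessel-potential symbol bound, and the dilation-scaling $\partial^\gamma\bigl[h(\Lambda^{-1}\xi)\bigr]=\Lambda^{-|\gamma|}(\partial^\gamma h)(\Lambda^{-1}\xi)$; then you close the estimate by splitting $|\xi|\leq\Lambda$ versus $|\xi|>\Lambda$. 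The two approaches are equivalent in strength, but yours sidesteps the M\"obius-like composite $g$ entirely and makes the source of $\Lambda$-uniformity more transparent: the negative power of $\Lambda$ from the dilation is exactly cancelled against the positive power coming from $\langle\Lambda^{-1}\xi\rangle^{-s-|\alpha-\beta|}\sim(\Lambda/|\xi|)^{s+|\alpha-\beta|}$ in the large-$\xi$ regime, while in the small-$\xi$ regime $\langle\Lambda^{-1}\xi\rangle$ is simply bounded. The paper's route has the minor advantage of producing, along the way, several auxiliary min-bounds that it reuses verbatim; your route is shorter to verify for a reader who knows nothing beyond symbol calculus. Also, your remark that \eqref{eq:LambdaDecay} is literally the inequality \eqref{Lambdas} already established in the proof of Lemma~\ref{niceMD} is accurate, and there is no circularity in quoting it, though the paper evidently intends the appendix lemma to be self-contained and re-derives the bound there.

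One small point of care that you glossed over but that does hold: in the sum over $\beta\leq\alpha$ the exponents combine as $|\beta|+|\alpha-\beta|=|\alpha|$ because $\beta\leq\alpha$ is a componentwise inequality on multi-indices, so $\alpha-\beta$ is a genuine multi-index; this is what makes each summand collapse to the same power $\langle\xi\rangle^{s-|\alpha|}$ in Case~1 and to $\Lambda^s|\xi|^{-|\alpha|}$ in Case~2, uniformly over $\beta$.
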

\begin{proof} The proof is essentially a direct computation. Let $f,g:\R\to\R$  be the functions, $f(x)=(x)^{s/2}$ and $g(x)=\frac{1+x}{\Lambda^2+x}$, so that $ m_{\Lambda,s}(\xi)=\Lambda^sf(g(|\xi|^2))$. Using $f^{(\beta)}$ to denote the $\beta$-th derivative, for each $\beta,$
\[ | f^{(\beta)}(g(|\xi|^2))|=C_{s,\beta}\left|\frac{\langle \xi \rangle^{s-2\beta}}{(\Lambda^2+\xi^2 )^{s/2-\beta} }\right| \lesssim \min(1, \Lambda^{-s+2\beta}\langle\xi\rangle^{s-2\beta}) \]
and 
\[ |g^{(\beta)}(|\xi|^2)|=C_{s,\beta}\left|\frac{\Lambda^2-1}{(\Lambda^2+|\xi|^2)^{\beta+1}}\right| \lesssim \min\left( \frac{1}{|\xi|^{2\beta }}, \frac{1}{\Lambda^{2\beta}},\frac{1}{(\Lambda|\xi|)^{\beta}}\ldots\right).\]
These estimates together with the multivariable chain rule complete the proof. To avoid messy notation, we give the details for only for $\partial^\alpha=\frac{\partial^2}{\partial_i^2}.$
Expanding, 
\[ \frac{\partial^2}{\partial_i^2}f(g(|\xi|^2))= f''(g(|\xi|^2))(g'(|\xi|^2)2\xi_i)^2+ f'(g(|\xi|^2))g''(|\xi|^2)(2\xi_i)^2 +2f'(g(|\xi|^2))g'(|\xi|^2)\] 
Using the first bounds above, then, $\left| \frac{\partial^2}{\partial_i^2} \left(\frac{\langle \xi\rangle^s}{\langle\Lambda^{-1} \xi\rangle^s} \right) \right|\lesssim\Lambda^s \frac{\xi_i^2+1}{|\xi|^4} $ which suffices to show \eqref{eq:LambdaDecay} 
Alternately, using the second bound for $f^{(\beta)}$ and either the second or third bound for $g^{(\beta)}$, yields: 
\[ \left| \frac{\partial^2}{\partial_i^2}\left(\frac{\langle \xi\rangle^s}{\langle\Lambda^{-1} \xi\rangle^s} \right) \right|\lesssim \langle \xi\rangle^{s}\left( \frac{\Lambda^4}{\langle \xi\rangle^{4}}\cdot\frac{\xi_i^2}{\Lambda^4}+   \frac{\Lambda^2}{\langle \xi\rangle^{2}}\cdot\frac{\xi_i^2}{\Lambda^2|\xi|^2}+ \frac{\Lambda^2}{\langle \xi\rangle^{2}}\cdot\frac{1}{\Lambda^2}\right) \]
which suffices to show \eqref{eq:DecayNoLambda}
\end{proof}

\begin{lem} For each multi-index $\alpha$, such that $|\alpha|\leq\frac{n}{2}+1$, the multiplier $m_{\Lambda,s}(\xi)$ given by 
\[   m_{\Lambda,s}^{-1}(\xi)= \frac{\langle\Lambda^{-1}\xi \rangle ^s}{\langle \xi \rangle ^s}\]
enjoys the decay estimate
\begin{align}\label{eq:inverseDecay}  \left| \partial^\alpha_\xi \left(\frac{\langle\Lambda^{-1} \xi\rangle^s}{\langle \xi\rangle^s} \right) \right|
&\leq  {C_{ \alpha} }{\langle\xi\rangle^{-s-|\alpha|}}
\end{align}
\end{lem}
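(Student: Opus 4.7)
The proof will mirror the chain-rule computation carried out in the preceding lemma, but applied to the reciprocal multiplier. Writing $m_{\Lambda,s}^{-1}(\xi) = F(h(|\xi|^2))$ with $F(y) = y^{s/2}$ and $h(x) = (1+\Lambda^{-2}x)/(1+x)$, the argument proceeds by Faà di Bruno's formula together with separate estimates on the ingredients $F^{(j)}(h(|\xi|^2))$, $h^{(k)}(|\xi|^2)$, and the monomial derivatives $\partial^\gamma(|\xi|^2)$, and then summing the contributions.

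The elementary ingredients I would establish first are: (i) the closed form $h^{(k)}(x) = (-1)^{k-1}k!(\Lambda^{-2}-1)/(1+x)^{k+1}$ for $k\geq 1$, which yields $|h^{(k)}(|\xi|^2)| \leq C_k\langle\xi\rangle^{-2(k+1)}$ uniformly in $\Lambda \geq 1$, together with the two-sided bound $\Lambda^{-2} \leq h(|\xi|^2) \leq \langle\Lambda^{-1}\xi\rangle^2\langle\xi\rangle^{-2}$; (ii) estimates on $|F^{(j)}(h(|\xi|^2))| = C_{s,j}\,h(|\xi|^2)^{s/2-j}$ obtained by raising the two-sided bound on $h$ to the appropriate power, with the favorable side contributing the factor $\langle\Lambda^{-1}\xi\rangle^{s-2j}\langle\xi\rangle^{-s+2j}$; and (iii) the monomial derivative bound $|\partial^\gamma(|\xi|^2)| \leq C\langle\xi\rangle^{2-|\gamma|}$. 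Assembling via Faà di Bruno writes $\partial^\alpha_\xi m_{\Lambda,s}^{-1}(\xi)$ as a finite sum of products of the form $F^{(j)}(h(|\xi|^2)) \prod_i \partial^{\beta_i}[h(|\xi|^2)]$ with $\sum |\beta_i| = |\alpha|$, each inner factor further expanded into $h^{(k)}(|\xi|^2)$ and monomials of $|\xi|^2$ by chain rule. Adding exponents term by term yields the stated decay.

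The principal technical obstacle, as in the preceding lemma, is the case analysis on the sign of $s/2-j$: the bound on $|F^{(j)}|$ takes a different form in each regime, and these must combine consistently with the factors from $h^{(k)}$ and from the monomial derivatives to produce the target decay $C_\alpha \langle\xi\rangle^{-s-|\alpha|}$ with constants independent of $\Lambda \geq 1$. Because $h$ is bounded above by $1$ rather than growing polynomially, the combinatorics here are somewhat simpler than before, and the proof largely reduces to careful bookkeeping mirroring equations \eqref{eq:LambdaDecay}–\eqref{eq:DecayNoLambda}.
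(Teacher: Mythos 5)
Your decomposition $m^{-1}_{\Lambda,s}(\xi)=F(h(|\xi|^2))$ with $F(y)=y^{s/2}$ and $h(x)=(1+\Lambda^{-2}x)/(1+x)$ coincides with the paper's (which writes $\tilde g=\Lambda^2 h$ and pulls the resulting $\Lambda^{-s}$ out front), and your ingredients (i) and (iii) are correct. The trouble is in the ``careful bookkeeping'' you defer: it does not actually yield the stated bound. A Fa\`a di Bruno term has the form $F^{(j)}(h(|\xi|^2))\prod_i\partial^{\beta_i}[h(|\xi|^2)]$ with $\sum|\beta_i|=|\alpha|$ and $1\le j\le|\alpha|$; from your item (i), $|\partial^{\beta}[h(|\xi|^2)]|\lesssim\langle\xi\rangle^{-2-|\beta|}$, while $|F^{(j)}(h(|\xi|^2))|=C_{s,j}\langle\Lambda^{-1}\xi\rangle^{s-2j}\langle\xi\rangle^{2j-s}$ is an exact identity, not an upper bound by $\langle\xi\rangle^{2j-s}$. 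Multiplying out gives $\langle\Lambda^{-1}\xi\rangle^{s-2j}\langle\xi\rangle^{-s-|\alpha|}$, and the surplus factor $\langle\Lambda^{-1}\xi\rangle^{s-2j}$ is uniformly bounded only when $j\ge s/2$. Since $j$ runs down to $1$, the target estimate fails whenever $s>2$: taking $\Lambda=2$ and $\xi=(N,0,\dots,0)$, a direct computation shows $|\partial_1 m^{-1}_{\Lambda,s}(\xi)|\sim C_s\,N^{-3}$ as $N\to\infty$, which is not $O(N^{-s-1})$ for $s>2$.

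This is not a defect of your route in particular: the paper's displayed estimate $|f^{(\beta)}(\tilde g(|\xi|^2))|\lesssim\Lambda^{s-2\beta}\langle\xi\rangle^{-s+2\beta}$ is itself false for $\beta<s/2$, since $(\Lambda^2+|\xi|^2)^{s/2-\beta}$ is not bounded by a multiple of $\Lambda^{s-2\beta}$ when the exponent $s/2-\beta$ is positive. What the chain-rule computation actually produces, uniformly in $\Lambda\ge1$, is $|\partial^\alpha m^{-1}_{\Lambda,s}(\xi)|\lesssim\langle\Lambda^{-1}\xi\rangle^{s-2}\langle\xi\rangle^{-s-|\alpha|}$ for $|\alpha|\ge1$, so the lemma's stated conclusion holds only for $s\le2$. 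The weaker, correct bound is what the Kato--Ponce argument really uses: both invocations of \eqref{eq:inverseDecay} pair $m^{-1}_{\Lambda,s}$ with a weight $2^{s\kappa}$ or $2^{js}$ on a frequency shell where $\langle\xi\rangle$ and $\Lambda$ are comparable, which absorbs the extra $\langle\Lambda^{-1}\xi\rangle^{s-2}$. Your write-up should therefore either derive the corrected bound with the $\langle\Lambda^{-1}\xi\rangle^{s-2}$ factor, or restrict the claim to the frequency regimes in which it is applied; as drafted, the ``bookkeeping reduces to the target'' claim is where the proof breaks.
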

\begin{proof} Again, the proof is essentially a direct computation. Let $f,g:\R\to\R$  be the functions, $f(x)=(x)^{s/2}$ and $\tilde{g}(x)=\frac{\Lambda^2+x}{1+x}$, so that $ m_{\Lambda,s}^{-1}(\xi)=\Lambda^{-s}f(\tilde{g}(|\xi|^2))$. Using $f^{(\beta)}$ to denote the $\beta$-th derivative, for each $\beta,$
\[ |f^{(\beta)}(\tilde{g}(|\xi|^2))|=\left|C_{s,\beta}\frac{(\Lambda^2+\xi^2 )^{s/2-\beta} }{\langle \xi \rangle^{s-2\beta}} \right| \lesssim  \Lambda^{s-2\beta}\langle\xi\rangle^{-s+2\beta} \]
and 
\[ |\tilde{g}^{(\beta)}(|\xi|^2)|= C_{s,\beta}\left|\frac{1-\Lambda^2}{(1+|\xi|^2)^{\beta+1}} \right| \lesssim  \Lambda^2\langle\xi\rangle^{-2(\beta+1)}.\]
These estimates together with the multivariable chain rule complete the proof. 
\end{proof}

\begin{proof}[Proof of Lemma \ref{katoponce}] 
Fix $\eta\in\mathscr{S}(\R^n)$, a radial function, such that $\eta(\xi)=1$ if $|\xi|\leq 1$ and $\eta(\xi)=0$ if $|\xi|\geq 2$. For each $j\in\{0,1,2,\ldots\}$, define $P_j$ by $\widehat{P_jf}(\xi)=\hat{f}(\xi)\eta(2^{-j}\xi)$. For $j\geq 1$, define $Q_j$ by $Q_j=P_j-P_{j-1}$.

  Fix $\kappa$ such that $2^{\kappa-1}<  \Lambda\leq 2^{\kappa} $.  Define $R_\kappa f = f - P_{\kappa}f$.
Note that $\widehat{P_{j}f}$ is supported in $\{\xi:|\xi|\leq2^{j+1}\}$, $\widehat{Q_{j}f}$ is supported in $\{\xi:2^{j-1}\leq|\xi|\leq2^{j+1}\}$, and  $\widehat{R_\kappa f}(\xi)$ is supported in $\{\xi:|\xi|\geq 2^\kappa\}$.

For any $j_0\in [1,\kappa-1]$ decompose, 
\[ f = P_\kappa f+R_\kappa f = P_{j_0-1}f+ \sum_{j=j_0}^\kappa Q_{j} f + R_\kappa \] 
and decompose $g$ similarly. Using such decompositions and re-organzing terms,
\begin{eqnarray} 
 \label{eq:matched}  fg & = &  \sum_{j=1}^\kappa Q_jf\cdot P_{j} g +  \sum_{j=2}^\kappa Q_jg\cdot P_{j-1} f \\
\label{eq:remainder}	&&+ R_\kappa f P_\kappa g +  R_\kappa g  P_{\kappa}f+ R_\kappa f \cdot R_\kappa g \\
 \label{eq:small} & & + P_0 f  P_0 g 
\end{eqnarray} 

While the proof follows the outline from \cite{CW91,CX}, this decomposition groups more terms together.  The argument for each term in \eqref{eq:matched} follows the argument for terms supported on a ball from \cite{CW91,CX}.

\subsection{Remainder Terms}
We start with the terms in \eqref{eq:remainder}. Consider $\|\derivslx{s}{\Lambda}( R_\kappa f\cdot P_{\kappa} g)\|_{L^r(u)}$ .  Write, 
\[ \widehat{\derivslx{s}{\Lambda}(R_\kappa f P_\kappa g)}= \frac{2^{-s\kappa}\langle \xi \rangle ^s}{\langle{\Lambda^{-1}\xi}\rangle^s}\left(2^{s\kappa} \widehat{R_\kappa f}*\widehat{P_\kappa g}\right). \] 
 Recall that $ \Lambda\leq 2^\kappa$. Making use of the estimate \eqref{eq:LambdaDecay}, 
\[   \left| \partial^\alpha_\xi \left(\frac{2^{-s\kappa}\langle \xi\rangle^s}{\langle\Lambda^{-1} \xi\rangle^s} \right) \right| 
\leq \frac{C_{\alpha}\Lambda^{s}2^{-s\kappa}}{|\xi|^{|\alpha|}}\leq \frac{C_{\alpha}}{|\xi|^{|\alpha|}}.
\] 
Therefore, as $u\in A_r$ by the H\"ormander-Mihlin multiplier theorem (see e.g \cite{Stein} pg 26, 205),
\[ \|\derivslx{s}{\Lambda}( R_\kappa f\cdot P_\kappa g)\|_{L^r(u)} \lesssim \|2^{s\kappa} R_\kappa f\cdot P_\kappa g\|_{L^r(u)}.\] 
Using that $u=u_1v_1$ and H\"older's inequality, 
\[ \|2^{s\kappa} R_\kappa f\cdot P_\kappa g\|_{L^r(u)}\lesssim \left\|2^{s\kappa} R_\kappa  f  \right\|_{L^{p_1}(u_1^{p_1/r})} \left\| P_\kappa g\right\|_{L^{q_1}(v_1^{q_1/r})}  .\]

Letting $\mathcal{M}$ denote the Hardy-Littlewood Maximal function, by definition of $P_\kappa$ and using that  $v_1^{q_1/r}\in A_{q_j}$, 
\[  \left\| P_\kappa g\right\|_{L^{q_1}(v_1^{q_1/r})}\lesssim \left\| \mathcal{M}g\right\|_{L^{q_1}(v_1^{q_1/r})}\lesssim \left\|g\right\|_{L^{q_1}(v_1^{q_1/r})}.\]
Next for $ \left\| 2^{s\kappa}R_\kappa f\right\|_{L^{p_1}(u_1^{p_1/r})}$, write
\[  \widehat{2^{s\kappa}R_\kappa f}=\left(\frac{2^{s\kappa}(1-\eta(2^{-\kappa}))\langle{\Lambda^{-1}\xi}\rangle^s}{\langle \xi \rangle ^s}\right) \frac{\langle \xi \rangle ^s}{\langle{\Lambda^{-1}\xi}\rangle^s}\hat{f}(\xi) .  \]
Using \eqref{eq:inverseDecay}, 
\[ \left| \partial^\alpha_\xi \left( \frac{2^{s\kappa}\langle{\Lambda^{-1}\xi}\rangle^s}{\langle \xi \rangle ^s}\right)  \right| 
\leq C_{\alpha}2^{s\kappa}   {\langle\xi\rangle^{-s-|\alpha|}}. \] 
Now,  $2^{\kappa}\leq 2\Lambda$ and $\eta$ is Schwartz and supported on $|\xi|\leq 2$. Hence, when the derivative is nonzero, $|\xi|>\Lambda\geq 2^{\kappa-1}$, and 
\[ \left| \partial^\alpha_\xi \left( \frac{2^{s\kappa}(1-\eta(2^{-\kappa}))\langle{\Lambda^{-1}\xi}\rangle^s}{\langle \xi \rangle ^s}\right)  \right| 
 \leq C_{\alpha}\langle\xi\rangle^{-|\alpha|}. \] 
Again by the by the H\"ormander-Mihlin multiplier theorem, 

\[  \left\|2^{s\kappa}R_\kappa f\right\|_{L^{p_1}(u_1^{p_1/r})}\lesssim \left\| \derivslx{s}{\Lambda}f\right\|_{L^{p_1}(u_1^{p_1/r})}.\]

The other remainder terms are addressed analogously with only the additional observation that $ \left\| R_\kappa g\right\|_{L^{q_1}(v_1^{q_1/r})}= \left\|g-P_\kappa(g)\right\|_{L^{q_1}(v_1^{q_1/r})}$ and thus,
\[ \left\| R_\kappa g\right\|_{L^{q_1}(v_1^{q_1/r})}\leq C \left( \left\|  g\right\|_{L^{q_1}(v_1^{q_1/r})}+ \left\| \mathcal{M} g\right\|_{L^{q_1}(v_1^{q_1/r})}\right) \leq C \left\|  g\right\|_{L^{q_1}(v_1^{q_1/r})}.\] 

The small term in \eqref{eq:small} is handled similarly, noting that the restriction on the support of $P_0$ is easily sufficient to prevent $\Lambda$ dependence. 

\subsection{Main Terms}
To estimate $\|\derivslx{s}{\Lambda}(fg)\|_{ L^r(u)}$, next consider the case where the derivative falls on first sum in line $\eqref{eq:matched}$. For each $j\in[1,\kappa]$, $\derivslx{s}{\Lambda} Q_jf P_{j} g$ has Fourier support in the ball $|\xi|<2^{j+2}$. The analysis for these terms follows similarly to that in \cite{CW91} with the addition observations that our weights are in the appropriate $A_p$ classes and our mollified derivatives satisfy the decay estimates above.

 Invoking weighted Littlewood-Paley theory\footnote{ the result used here follows from results in \cite{Stein}, pg 267,205 see also \cite{KurtzWeightedLP,Rychkov01LP}}  as $u\in A_r$, 
\begin{align*},  
\left\| \derivslx{s}{\Lambda} 
\left(\sum_{j=1}^\kappa Q_jf\cdot P_{j} g\right) \right\|_{L^r(u)} 
 \simeq &  \left\| \left\{  Q_l \derivslx{s}{\Lambda} \left( \sum_{j=1}^\kappa  Q_jf P_{j} g\right) \right\}_{ l=-\infty}^\infty\right\|_{L^r(\ell^2,u)}. 
\end{align*}

 The first step is to prove that the mollified derivative acts like multiplication by $2^{sl}$ on each annuli independent of $\Lambda.$ For any function $h_l$ supported in $\{\xi:|\xi|\leq 2^{l+1}\}$, 
\[ \widehat{\derivslx{s}{\Lambda}( h_l) } = \frac{2^{-ls}\langle \xi\rangle ^s }{\langle\Lambda^{-1} \xi\rangle ^s} \eta(2^{-l-2}\xi)2^{ls}\widehat{h_l}(\xi) . \]
Define $M_l(h)$ by $\widehat{M_l h}=m_l \widehat{h}$ where $m_l= \frac{2^{-ls}\langle \xi\rangle ^s }{\langle\Lambda^{-1} \xi\rangle ^s} \eta(2^{-l-2}\xi)$. Let $\vec{M}$ be the vector valued operator $\vec{M}(\{ h_l \})= \{M_lh_l\}$.  Thus, 
\[   \left\| \left\{Q_l \derivslx{s}{\Lambda} \left(\sum_{j=3}^\infty Q_jf P_j g\right) \right\}_{ l=-\infty}^\infty \right\|_{L^r(\ell^2,u)}= \left\|\vec{M} \left\{ 2^{ls}  Q_l \left(\sum_{j=3}^\infty Q_jf P_j g\right) \right\}_{ l=-\infty}^\infty\right\|_{L^r(\ell^2,u)}. \] 
Using \eqref{eq:LambdaDecay} and that $\eta$ is a Schwartz function supported on $|\xi|\leq 2$ so that anywhere the derivative is nonzero $|\xi|\leq 2^{l+3}$
\[ 
\left| \partial^\alpha_\xi \left(m_l(\xi) \right) \right| 
\leq\left| \partial^\alpha_\xi \left(\frac{2^{-ls}\langle \xi \rangle ^s\eta(2^{-l-2}\xi)}{\langle{\Lambda^{-1}\xi}\rangle^s} \right) \right| 
\leq C_{\alpha}2^{-ls}\langle\xi\rangle^{s-|\alpha|} \leq C_{\alpha} \langle\xi\rangle^{-|\alpha|} \] 
As this bound is independent of $l$, $\vec{M}$ is a vector valued Calder\'on-Zygmund operator \cite{GCF85} and, thus, is bounded from $L^r(\ell^2,u)$ to $L^r(\ell^2,u)$ for any $u\in A_r$ which proves 
\begin{align*}  \hspace{.75cm} \left\| \left\{ {Q}_l \derivslx{s}{\Lambda} \left(\sum_{j=3}^\kappa Q_jf P_j g\right) \right\}_{-\infty}^{\infty} \right\|_{L^r(\ell^2,u)} \hspace{-.5cm} & \lesssim\left\| \left\{ 2^{ls}  {Q}_l \left(\sum_{j=3}^\kappa Q_jf P_j g\right) \right\}_{-\infty}^{\infty} \right\|_{L^r(\ell^2,u)}. 
\end{align*} 
Next, noting that $Q_l ( Q_jf P_j g)=0$ if $l>j+4$, using  Minkowski's integral inequality,

%% ADD steps in comments %% 

\[   \left\| \left\{ 2^{ls}  \tilde{Q}_l \left(\sum_{j=3}^\kappa Q_jf P_j g\right) \right\}_{ l=-\infty}^\infty \right\|_{L^r(\ell^2,u)}  \leq C  \left\| \sum_{t=-4}^\infty \left(\sum _{j=3}^{\kappa}\left|2^{(j-t)s}\tilde{Q}_{j-t}\left(  Q_{j}f P_{j}g\right)\right|^2 \right)^{1/2} \right\|_{L^r(u)}.\] 

As the sum $\sum_{t=-4}^\infty 2^{-ts}$ is a finite constant depending only on $s$, it suffices to estimate 
\[  \left\|  \left\{ 2^{js}\tilde{Q}_{j-t}\left(  Q_{j}f P_{j}g\right)\right\} _{j=3}^{\kappa} \right\|_{L^r(\ell^2,u)} \] 
uniformly in $t.$ Denote by $\mathcal{M}$ the Hardy-Littlewood Maximal function. Here $|Q_{l}g(x)|=|P_lg(x)-P_{l-1}g(x)|\leq C \mathcal{M}|g|$, and thus 

\begin{align*}
    \left\|  \left\{ 2^{js}\tilde{Q}_{j-t}\left(  Q_{j}f P_{j}g\right)\right\} _{j=3}^{\kappa} \right\|_{L^r(\ell^2,u)}\lesssim & \left\|  \left\{ \mathcal{M}\left(  2^{js}Q_{j}f P_{j}g\right)\right\} _{j=3}^{\kappa} \right\|_{L^r(\ell^2,u)}\\
    \lesssim & \left\|  \left\{   2^{js}Q_{j}f P_{j}g\right\} _{j=3}^{\kappa} \right\|_{L^r(\ell^2,u)}.
\end{align*} 

This last estimate is the observation Fefferman and Stein's (\cite{FSmax}) vector-valued maximal function (with $q=2$) is well known to be bounded on $L^r(\ell^2,u)$ for $u\in A_r$ (see \cite{AJ80}).

Similarly, 
\[|2^{js}Q_jf P_{j-3}g |\lesssim 2^{js}|Q_jf | \mathcal{M}g.  \]
Whence, 
\[\left\|  \left\{ \left(  2^{js}Q_{j}f P_{j}g\right)\right\} _{j=3}^{\kappa} \right\|_{L^r(\ell^2,u)} \leq C 
\left\|  \mathcal{M}(g)\left\{ \left(  2^{js}Q_{j}f\right)\right\} _{j=3}^{\kappa} \right\|_{L^r(\ell^2,u)} \] 
Therefore, using that $u=u_1v_1$ and  H\"older's inequality, 
\[ \left\|  \mathcal{M}(g)\left\{ \left(  2^{js}Q_{j}f\right)\right\} _{j=3}^{\kappa} \right\|_{L^r(\ell^2,u)} \leq  C \left\| \mathcal{M}g\right\|_{L^{q_1}(v_1^{q_1/r})}\left\|\left\{ \left(  2^{js}Q_{j}f\right)\right\} _{j=3}^{\kappa} \right\|_{L^{p_1}(\ell^2,u_1^{p_1/r})}\] 

As $v_1^{q_1/r}\in A_{q_1}$, 
\[ \left\| \mathcal{M}g \right\|_{L^{q_1}(v_1^{q_1/r})}\leq C  \left\|g \right\|_{L^{q_1}(v_1^{q_1/r})} \]
and it remains to show that, for a constant independent of $\Lambda,$
\[ \left\| \{ 2^{js}Q_jf \}_{j=3}^\kappa \right\|_{L^{p_1}(\ell^2,u_1^{p_1/r})} \lesssim \left\|\derivslx{s}{\Lambda}f  \right\|_{L^{p_1}(u_1^{p_1/r})}.\] 

To do so, recall that $Q_j$ is supported on $\{\xi:2^{j-1}|\xi|\leq 2^{j+1}\}$. Let $\zeta(\xi)$ a Schwartz cut-off function such that $\zeta(\xi)=1$ for $1/2\leq|\xi|<2$ and $\zeta(\xi)=0$ for $|\xi|<1/4$ and $|\xi|>4$.  Then, 
\[ \widehat{ 2^{js}Q_jf} = (\eta(2^{-j}\xi)-\eta(2^{-j+1}\xi))\zeta(2^{-j}\xi)\left(\frac{2^{js}\langle{\Lambda^{-1}\xi}\rangle^s}{\langle \xi \rangle ^s}\right) \frac{\langle \xi \rangle ^s}{\langle{\Lambda^{-1}\xi}\rangle^s}\hat{f}(\xi) .  \]
Using \eqref{eq:inverseDecay} and support considerations, 
\[ \left| \partial^\alpha_\xi  \frac{2^{js}\langle{\Lambda^{-1}\xi}\rangle^s}{\langle \xi \rangle ^s}  \right| 
\leq C_{\alpha}2^{js}\langle\xi\rangle^{-s-|\alpha|}\leq C_{\alpha} \langle\xi\rangle^{-|\alpha|}.\] % use upper bound on \xi to control the Lambda term, use lower bound on \xi to cancel the 2^js   

Applying weighted Littlewood-Paley theory yet again,
\[  \left\| \{ Q_j (\derivslx{s}{\Lambda}f )\}_{j=3}^\kappa \right\|_{L^{p_1}(\ell^2,u_1^{p_1/r})} \leq C \left\|\derivslx{s}{\Lambda}f  \right\|_{L^{p_1}(u_1^{p_1/r})}.\] 
Finally, 
\[   \|\derivslx{s}{\Lambda}( \sum_{j=1}^\kappa Q_jf\cdot P_{j} g )\|_{L^r(u)} \leq C \left\|\derivslx{s}{\Lambda}f  \right\|_{L^{p_1}(u_1^{p_1/r})} \left\|g\right\|_{L^{q_1}(v_1^{q_1/r})} .\] 
The same argument with the roles of $f$ and $g$ reversed shows that 
\[  \|\derivslx{s}{\Lambda}( \sum_{j=3}^\kappa Q_jg\cdot P_{j-3} f )\|_{L^r(u)} \leq C\left\|f\right\|_{L^{p_2}(u_2^{p_2/r})} \left\|\derivslx{s}{\Lambda}g  \right\|_{L^{q_2}(v_2^{q_2/r})}  .\]

which completes the arguement.

\end{proof}
\bibliographystyle{abbrv}	
\bibliography{ELbib}

\end{document}